\newtheorem{thm}{Theorem}[section]
\newtheorem{cor}{Corollary}[section]
\newtheorem{lem}{Lemma}[section]
\newtheorem{prop}{Proposition}[section]
\newtheorem{deft}{Definition}[section]
\newtheorem{rek}{Remark}[section]
\newtheorem{exam}{Example}[section]
\let\noi=\noindent
\def\N{\mathbb{N}} 
\def\init{\mbox{in}}
\title{Key Polynomials}
\author{Wael Mahboub\\
Institut de Math\'ematiques de Toulouse\\
UMR 5219 du CNRS,\\
Universit\'e Paul Sabatier\\
118, route de Narbonne\\
31062 Toulouse cedex 9, France.\\email:
wmahboub@hotmail.com}
\begin{document}
\maketitle

\section{Introduction.}\label{Intro}

The notion of key polynomials (and the related notion of augmented valuations) was first introduced in 1936 by S. Maclane in the case of discrete rank 1 valuations (see \cite{M1}, \cite{M2} and \cite{MS}).

Let $K \rightarrow L$ be a field extension and $\nu$ a
valuation of K. The original motivation for introducing key polynomials was the problem of describing all the extensions $\mu$ of $\nu$ to $L$.

Take a valuation $\mu$ of L extending the valuation $\nu$. In the case when $\nu$ is discrete of rank $1$ and $L$ is a simple algebraic
extension of $K$ Maclane introduced the notions of key polynomials for $\mu$ and augmented valuations and proved that $\mu$ is obtained as a limit of a family of augmented valuations on the polynomial ring $K[x]$ (\cite{M2}, p. 377, Theorem 8.1).

Objects very closely related to key polynomials, called approximate roots, appeared in 1973 in the work of Abhyankar and Moh (\cite{AM1} and \cite{AM2}), and independently in the  Th\`{e}se d'Etat of Monique Lejeune-Jalabert \cite{L}. See also \cite{Spi} for another version of the theory of approximate roots in regular two-dimensional local rings.
More recently, the notion of key polynomials appeared in the work of Spivakovsky and Teissier on the local uniformization theorem (a local version of resolution of singularities) in arbitrary characteristic.

The relation between key polynomials and resolution of singularities (in the special case of singularities of plane curves) can be briefly described as follows. Let $(C,0)$ be a germ of an irreducible plane curve defined by a polynomial $f\in k[x,y]$. Assume that $f$ is of the form
$f(x,y)=x^{d}+a_{d-1}(y)x^{d-1}+...+a_{0}(y)$, that is, $f(x,y)=P(x)$ with $P$ a monic polynomial with coefficients in
$K=k(y)$. If we call $ L $ the fraction field of the local ring $\mathcal O_{C,0} $, $L$ is the simple extension of $K$ defined by adjoining the variable $x$, satisfying the polynomial relation $P$. Finding a resolution of singularities of the germ $(C,0)$ is closely related to finding valuations $\left\{\mu_1,\dots,\mu_r\right\}$ of $ L $ which extend the $y$-adic valuation $\nu$ of $K$. Precisely, resolution of singularities of $(C,0)$ amounts to finding a regular semi-local birational ring extension $\mathcal O'$ of $\mathcal O_{C,0}$. The locallizations of $\mathcal O'$ at its various maximal ideals are exactly the valuation rings $R_{\mu_1},\dots,R_{\mu_r}$. In the case when the germ is analytically irreducible, there exists a unique extension $\mu$ of $\nu$ to $L$; the family $A$ of augmented valuations that determines $\mu$ is finite and the associated family of key polynomials $(\phi_i)_{0\leq i\leq g}$ is the family of approximate roots of $f$ in the sense of Abhyankar -- Moh -- Lejeune-Jalabert. The germs of curves defined by these polynomials have maximal contact with the curve $C$.

In a series of papers \cite{V0}--\cite{V3} M. Vaqui\'e generalized MacLane's notion of key polynomials to the case of arbitrary valuations $\nu$ (that is, valuations which are not necessarily discrete of rank 1). The main definitions from these papers are reproduced below in \S\ref{vaquie}. In the present paper, we will refer to key polynomials in the sense of Vaqui\'e as \textbf{Vaqui\'e key polynomials}. The MacLane--Vaqui\'e approach is axiomatic in the sense that key polynomials are defined in terms of their abstract valuation-theoretic properties rather than by explicit formulae.

In the paper \cite{HOS}, published in the Journal of Algebra in 2007---the same year as \cite{V1}---F.J. Herrera Govantes, M.A. Olalla Acosta and M. Spivakovsky develop their own notion of key polynomials for extensions $(K,\nu)\rightarrow(L,\mu)$ of valued fields, where $\nu$ is of archimedian rank 1 (not necessarily discrete) and give an explicit description of the limit key polynomials (which can be viewed as ``generalized Artin--Schreier polynomials''). These authors give a definition of a \textbf{complete system} $\left\{Q_i\right\}_{i\in\Lambda}$ of key polynomials, indexed by a well ordered set $\Lambda$ of order type at most $\mathbb N$ if $char\ \frac{R_\nu}{m_\nu}=0$ and at most $\omega\times\omega$ if $char\ \frac{R_\nu}{m_\nu}>0$. In the present paper, we will refer to key polynomials in the sense of Herrera--Olalla--Spivakovsky as \textbf{HOS} key polynomials. The definition of a complete system of HOS key polynomials is recalled below (Definition \ref{complete}). In \cite{HOS} HOS key polynomials $\{Q_i\}_{i\in\Lambda}$ are constructed by transfinite recursion in $i$, along with \textbf{truncations} $\nu_i$ of $\mu$. Each $Q_i$ is described by an explicit formula in terms of the previously defined key polynomials $\{Q_j\}_{j<i}$. Some of the main definitions and results from \cite{HOS} are reproduced in \S\ref{HOSkey}.

Although it seemed very plausible that the two notions of key polynomials are equivalent or at least closely related, to this author's knowledge no precise results to this effect exist in the literature. Our purpose in this paper is to clarify the relationship between the two notions of key polynomials already developed in \cite{HOS} and \cite{V0}--\cite{V3}. Our main results, stated and proved in \S\ref{Comparison}, can be summarized as follows:

Let $(K,\nu)\rightarrow(L,\mu)$ be an extension of valued fields with $rk\ \nu=1$. Let $\{Q_i\}_{i\in\Lambda}$ be a complete system of HOS key polynomials and $\{\nu_i\}_{i\in\Lambda}$ the corresponding truncations of $\mu$. Then:

\begin{enumerate}
\item For each $i\in\Lambda$ the polynomial $Q_i$ is a Vaqui\'e key polynomial for the truncation $\nu_{i_0}$, where $i_0=i-1$ in the case when $i$ has an immediate predecessor, and $i_0<i$ is a suitably chosen element of $\Lambda$, described in more detail in \S\ref{HOSkey}, if $i$ is a limit ordinal (this is our Proposition \ref{HOSimpliesVaquie}). As a matter of notation, we write $i=i_0+$, regardless of whether or not $i$ is a limit ordinal.
\item The family $F=\{\nu_{i}\}_i\in\Lambda$ of valuations constructed in \cite{HOS} can be extended to an admitted family for the valuation $\mu$ (this is our Theorem \ref{admitted}).
\end{enumerate}

Conversely, if $F=\{\mu_{i}\}_{i\in\Lambda}$ is an admissible family of valuations of $K[x]$ which is admitted for $\mu$, and $\left\{Q_{i}\right\}_{i\in\Lambda}$ the family of key polynomials in the sense of Vaqui\'e associated to $F$, then every polynomial $Q_i$ in the family can be written recursively in terms of the polynomials $Q_{i'}$ with $i'<i$ (this is our Proposition \ref{VaquieimpliesHOS}). If the family $F$ contains continued subfamilies (see Definition \ref{continued} below), the set $\Lambda$ need not be well ordered. We end \S4 by explaining how replacing $\Lambda$ by a suitable well ordered subset and then suitably modifying the polynomials $Q_i$ (roughly speaking, by subtracting terms of higher value in the sense defined precisely below) results in a complete family of HOS key polynomials.\\

In \S\ref{example_limit} we give an example of a limit key polynomial in the case when $rk\ \nu=1$, $char \ \frac{R_\nu}{m_\nu}>0$ and the valuations $\nu$ and $\mu$ are centered in local noetherian rings with fields of fractions $K$ and $L$, respectively.\\

I thank Mark Spivakovsky for his advice and Olga Kashcheyeva for the correction of the example of the last section.\\
I thank the referee for useful comments and suggestions, which led to a major rewriting of the
paper.\\

\textbf{Notation}: We will use the notation $\mathbb N$ for the set of strictly positive integers and $\mathbb{N}_0$ for the set of non-negative integers.

\section{Vaqui\'e key polynomials: the definitions.}\label{vaquie}

\begin{deft}
	Let $\nu: K^{*}\rightarrow \Gamma$ be a valuation. Let $(R_{\nu}, M_{\nu}, k_{\nu})$ denote the valuation ring of $\nu$. For $\beta\in\Gamma$, consider the following $R_{\nu}$-submodules of $K$:
	
	\[ P_{\beta}=\left\{y\in K^{*}\ |\ \nu(y)\geq\beta\right\}\cup\left\{0\right\}
\]
	\[ P_{\beta+}=\left\{y\in K^{*}\ |\ \nu(y)>\beta\right\}\cup\left\{0\right\}
\]

 We define
	\[ G_{\nu}=\bigoplus\limits_{\beta\in\Gamma} \frac{P_{\beta}}{P_{\beta+}}
\]

The $k_{\nu}$-algebra $G_{\nu}$ is an integral domain. For any element $y\in K^{*}$ with $\nu(y)=\beta$, the natural image of $y$ in $\frac{P_{\beta}}{P_{\beta+}}\subset G_{\nu}$ is a homogeneous element of $G_{\nu}$ of degree $\beta$, which we will denote by $in_{\nu}y$.
\end{deft}
Let $(K,\nu)$ be a valued field, $x$ an independent variable, and let $\mu$ be a valuation of $K[x]$, extending $\nu$.
\begin{deft}
For all $f$ and $g$ in $K[x]$:
\begin{enumerate}
	\item We say that $f$ and $g$ are $\mu$-equivalent if
          $in_{\mu} f=in_{\mu}g$.
	\item We say that $g$ $\mu$-divides $f$ if there exists $h\in
          K[x]$ such as $f$ is $\mu$-equivalent to $h.g$.
\end{enumerate}	
\end{deft}

\begin{deft}
\begin{enumerate}
	\item We say that a polynomial $\phi$ in $K[x]$ is $\mu$-minimal if, for all $f$ in
          $K[x]$ we have: $\phi$ $\mu$-divides $f$ $\Rightarrow$
          $\deg_{x}f\geq\deg_{x}\phi$.
	\item We say that $\phi$ is $\mu$-irreducible if for all $f$,
          $g$ in $K[x]$ we have:\\  $\phi$ $\mu$-divides $f.g$
          $\Rightarrow$ $\phi$ $\mu$-divides $f$ or $\phi$
          $\mu$-divides $g$.
\end{enumerate}
\end{deft}

\begin{deft}
	(\cite{V}, page 3442)
	A polynomial $\phi$ in $K[x]$ is said to be a \textbf{Vaqui\'e key polynomial} for the valuation $\mu$ of $K[x]$ if $\phi$ satisfies:
\begin{enumerate}
	\item $\phi$ is $\mu$-minimal.
	\item $\phi$ is $\mu$-irreducible.
	\item $\phi$ is monic.
\end{enumerate}
\end{deft}

\begin{exam}
Let $k$ be a field and $k(x,y)$ an extension of $k$, where $x$ and $y$ are two elements, algebraically independent over $k$. Put $K=k(y)$. Let $\nu$ be the $y$-adic valuation on $K$ (in particular, $\nu(y)=1$). Define the valuation $\mu$ on $K[x]$ as follows: for all $f=\sum\limits_{j=0}^{m} b_{j}x^j$ in $K[x]$\\ $\mu(f)=\min\limits_{0\leq j\leq m}\left\{\nu(b_{j})+ \frac{3}{2}j\right\}$ (in particular, we have $\frac{3}{2}=\mu(x)$).\\
For any $c\in k$ the polynomial $\phi=x^2+cy^3$ is a Vaqui\'e key polynomial for the valuation $\mu$.
\end{exam}

Let $\mu$ be a valuation of $K[x]$ and $\phi$ a key polynomial for $\mu$.
We note that every polynomial $f$ in $K[x]$ can be written uniquely in the form
	\[
	f=f_{m}\phi^{m}+f_{m-1}\phi^{m-1}+...+f_{0}
\]
with $\deg_{x}f_j<\deg_x{\phi}$ for all $0\leq j\leq m$.

Let $\Gamma'$ be an ordered abelian group containing the value group $\Gamma$ of $\mu$. Take an element $\gamma\in \Gamma'$ satisfying $\gamma>\mu(\phi)$.

\begin{deft}\label{augmvaluation} Define the valuation $\mu'$ by $\mu'(f)=\min\limits_{0\leq j\leq m}\left\{\mu(f_{j})+j\gamma\right\}$.
We call the valuation $\mu'$ defined by the valuation $\mu$, the key polynomial $\phi$ and the element $\gamma$ an \textbf{augmented valuation} and we denote: $\mu'=[\mu;\mu'(\phi)=\gamma]$.
\end{deft}
\begin{exam}
Keep the notation and hypotheses of Example 2.1. Take $c=1$, so that $\phi=x^2+y^3$. We see that $\mu(\phi)=\min\left\{\frac{3}{2}*2, \nu(y^3)\right\}=3$.\\
Put $\gamma=\frac{10}{3}$ and define the augmented valuation $\mu'$ on $K[x]$ as follows: \\
for every polynomial $f=f_{m}\phi^{m}+f_{m-1}\phi^{m-1}+...+f_{0}$ of $K[x]$,\\
 $\mu'(f)=\min\limits_{0\leq j\leq m}\left\{\mu(f_j)+j\gamma\right\}$ where $\gamma=\mu'(\phi)=\frac{10}{3}>\mu(\phi)=3$.
\end{exam}

\noindent\textbf{Notation.} In the situation of Definition \ref{augmvaluation}, we will sometimes write $[\mu;\mu'(\phi)=\gamma]$ instead of $\mu'$, to emphasize the dependence of $\mu'$ on $\phi$ and $\gamma$.
\begin{deft}\label{augmentediterated}
(\cite{V}, page 3463).
A family $\{\mu_{\alpha}\}_{\alpha\in A}$ of valuations of $K[x]$,
indexed by a totally ordered set $A$, is called a \textbf{family of augmented
iterated valuations} if for all $\alpha$ in $A$, except $\alpha$ the
smallest element of $A$, there exists $\theta$ in $A$, $\theta <
\alpha$, such that the valuation $\mu_{\alpha}$ is an augmented
valuation of the form $\mu_{\alpha}=[\mu_{\theta};
\mu_{\alpha}(\phi_{\alpha})=\gamma_{\alpha}]$, and if we have the
following properties:
\begin{enumerate}
	\item If $\alpha$ admits an immediate predecessor in $A$,
          $\theta$ is that predecessor, and in the case when $\theta$
          is not the smallest element of  $A$, the polynomials
          $\phi_{\alpha}$ and $\phi_{\theta}$ are not
          $\mu_{\theta}$-equivalent and satisfy
          $\deg\phi_{\theta}\leq\deg\phi_{\alpha}$;
	\item if $\alpha$ does not have an immediate predecessor in $A$,
          for all $\beta$ in $A$ such that $\theta<\beta<\alpha$, the
          valuations $\mu_{\beta}$ and $\mu_{\alpha}$ are equal to the
          augmented valuations, respectively,
$$
\mu_{\beta}=[\mu_{\theta};\mu_{\beta}(\phi_{\beta})=\gamma_{\beta}]
$$
and
$$
\mu_{\alpha}=[\mu_{\beta};\mu_{\alpha}(\phi_{\alpha})=\gamma_{\alpha}],
$$
and the polynomials $\phi_{\alpha}$ and $\phi_{\beta}$ have the same degree.
\end{enumerate}
\end{deft}

\begin{deft}\label{continued}
(\cite{V}, page 3464)
	A family of augmented iterated valuations
        $\{\mu_{\alpha}\}_{\alpha\in A}$ is said to be \textbf{continued} if there
        exists a valuation $\mu$ on $K[x]$, an infinite subset
        $\Lambda=\left\{ \gamma_{\alpha}\ |\ \alpha\in A \right\}$ of
        the group $\Gamma$ not containing a maximal element, a family of
        polynomials $\{\phi_{\alpha}\}_{\alpha\in A}$ of the same degree
        $d$, each polynomial $\phi_{\alpha}$ being a key polynomial
        for the valuation $\mu$ with $\mu_{\alpha}=[\mu;
        \mu_{\alpha}(\phi_{\alpha})=\gamma_{\alpha}]$ for all $\alpha$
        in $A$.
\end{deft}
\begin{rek} (\cite{V}, 3463, beginning of \S1.4) Consider a continued family of augmented iterated valuations
        $\{\mu_{\alpha}\}_{\alpha\in A}$. Then all the valuations $\mu_{\alpha}$ have the same value group. In what follows, we will denote this common value group by $\Gamma_\bullet$.
\end{rek}

\begin{deft}
(\cite{V}, page 3464)
	A continued family of augmented iterated valuations
        $\{\mu_{\alpha}\}_{\alpha\in A}$ is said to be \textbf{exhaustive} if the
        set $\Lambda$ satisfies:
	\[ \forall\alpha<\beta\in A, \forall \gamma\in\Gamma, \gamma_{\alpha}<\gamma<\gamma_{\beta} \Rightarrow \gamma\in\Lambda.
\]
\end{deft}
Consider a continued family of augmented iterated valuations
        $F=\{\mu_{\alpha}\}_{\alpha\in A}$ as above, not necessarily exhaustive. Following \cite{V}, page 3455, let $\Phi_\bullet$ denote the set of monic polynomials $\phi$ in $K[x]$, of degree $d$, such that there exist $\alpha,\beta\in A$, depending on $\phi$, satisfying $\mu_\alpha(\phi)<\mu_\beta(\phi)=\mu_{\beta'}(\phi)$ for all $\beta'\in A$ with $\beta'\ge\beta$. Let $\Lambda_\bullet=\left\{\left.\mu_\beta(\phi)\ \right|\ \phi\in\Phi_\bullet,\beta\in A\text{ sufficiently large}\right\}$. Let $Exh(A)$ denote a totally ordered index set, with a fixed order isomorphism $\gamma:Exh(A)\rightarrow\Phi_\bullet$. We will write $\gamma_\alpha$ for $\gamma(\alpha)$ (the only reason we introduce an extra index set at this point is to make the notation consistent with that of \cite{V}). For each $\alpha\in Exh(A)$ pick and fix a polynomial $\phi_\alpha\in\Phi_\bullet$ such that $\mu_{\beta}(\phi_\alpha)=\gamma_\alpha$ for all $\beta\in A$ sufficiently large; by definition of $\Phi_\bullet$ there exists $\alpha_0\in A$ such that
\begin{equation}
\mu_{\alpha_0}(\phi_\alpha)<\gamma_\alpha.\label{eq:alpha0}
\end{equation}
Let $\mu_\alpha=[\mu_{\alpha_0};\mu_\alpha(\phi_\alpha)=\gamma_\alpha]$; the valuation $\mu_\alpha$ does not depend on the choice of $\alpha_0$ satisfying (\ref{eq:alpha0}).

The discussion preceding Lemme 1.17 (\cite{V}, page 3455) shows that the resulting family $Exh(F):=\{\mu_{\alpha}\}_{\alpha\in Exh(A)}$ is a continued family of augmented iterated valuations.
\begin{prop}\label{exhaustive}(\cite{V}, page 3455, lemme 1.17)
	Consider a continued family of augmented iterated valuations
        $\{\mu_{\alpha}\}_{\alpha\in Exh(A)}$ described above. We have the following results:

\begin{enumerate}
	\item All the valuations $\mu_{\alpha}$, $\alpha\in Exh(A)$ have the same value group $\Gamma_\bullet$.
	\item For all $\alpha$ in $Exh(A)$, the interval $]\gamma, \gamma_{\alpha} ]= \left\{\delta\in\Gamma_\bullet\ |\ \gamma<\delta\le\gamma_{\alpha}\right\}$ is contained in $\Lambda_\bullet$.
\end{enumerate}
In particular, the continued family $Exh(F)=\{\mu_{\alpha}\}_{\alpha\in Exh(A)}$ of augmented iterated valuations is exhaustive.
\end{prop}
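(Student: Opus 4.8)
I would treat the two numbered assertions separately; the concluding statement, that $Exh(F)$ is exhaustive, is then formal. Indeed, for the family $Exh(F)$ the set of stabilized values is $\Lambda_\bullet=\{\gamma_\alpha\mid\alpha\in Exh(A)\}$, and since $Exh(F)$ is a continued family of augmented iterated valuations (as recalled just before the statement), the set $\Lambda_\bullet$ has no greatest element by Definition \ref{continued}. Hence, whenever $\alpha<\beta$ in $Exh(A)$ and $\delta\in\Gamma_\bullet$ lies strictly between $\gamma_\alpha$ and $\gamma_\beta$, we have $\gamma<\delta\le\gamma_\beta$ (the element $\gamma$ of the statement being a lower bound for all the $\gamma_\alpha$), so $\delta\in\,]\gamma,\gamma_\beta]\subseteq\Lambda_\bullet$ by assertion (2); this is exactly the exhaustiveness condition.

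Assertion (1) should be short. From the defining formula $\mu'(f)=\min_{0\le j\le m}\{\mu(f_j)+j\gamma\}$ of an augmented valuation $\mu'=[\mu;\mu'(\phi)=\gamma]$ (Definition \ref{augmvaluation}) one reads off directly that the value group of $\mu'$ is the subgroup of $\Gamma'$ generated by the value group of $\mu$ and by $\gamma$: the $\mu$-value of any polynomial of $x$-degree $<\deg\phi$ already lies in the value group of $\mu$, while $\mu'(\phi)=\gamma$. Applying this to $\mu_\alpha=[\mu_{\alpha_0};\mu_\alpha(\phi_\alpha)=\gamma_\alpha]$, the value group of $\mu_\alpha$ is generated by that of $\mu_{\alpha_0}$ together with $\gamma_\alpha$. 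But $\mu_{\alpha_0}$ is a member of the original continued family, so its value group is $\Gamma_\bullet$ by the Remark following Definition \ref{continued}; and $\gamma_\alpha\in\Lambda_\bullet\subseteq\Gamma_\bullet$ straight from the definition of $\Lambda_\bullet$ as a set of values $\mu_\beta(\phi)$ with $\beta\in A$. Hence the value group of $\mu_\alpha$ equals $\Gamma_\bullet$ for every $\alpha\in Exh(A)$, which proves (1). Note that this computation also shows $\Gamma_\bullet$ is generated by the value group of $\mu$ together with a single $\gamma_\beta$, $\beta\in A$, a fact I will reuse below.

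The substance of the proposition, and where I expect the main obstacle, is assertion (2). Fix $\alpha\in Exh(A)$ and $\delta\in\Gamma_\bullet$ with $\gamma<\delta\le\gamma_\alpha$; one must produce a monic polynomial $\phi\in K[x]$ of degree $d$ belonging to $\Phi_\bullet$ whose eventual value $\mu_\beta(\phi)$, for $\beta\in A$ large, is exactly $\delta$. For $\delta=\gamma_\alpha$ one takes $\phi=\phi_\alpha$; for $\delta<\gamma_\alpha$ the plan is to \emph{lower} the eventual value of a known member of $\Phi_\bullet$ (for instance $\phi_\alpha$, or one of the original key polynomials $\phi_\beta$) down to $\delta$ by subtracting a correcting term of $x$-degree $<d$ -- the ``subtracting terms of higher value'' device mentioned in the Introduction. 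Concretely I would: (i) using the fact that $\Gamma_\bullet$ is generated by the value group of $\mu$ and a single $\gamma_\beta$, together with the fact that each original $\phi_\beta$ lies in $\Phi_\bullet$ (part of the discussion preceding Lemme 1.17 in \cite{V}), reduce to the case where $\delta$ lies strictly between two stabilized values already known to be in $\Lambda_\bullet$; (ii) for such $\delta$, write down an explicit modification $\phi=\phi_\beta-z$ with $z$ of $x$-degree $<d$, chosen so that the leading forms of $\phi_\beta$ and $z$ partially cancel in $G_{\mu_{\beta'}}$ for all large $\beta'$; (iii) check the three points that make this work, namely that $\phi$ is monic of degree $d$, that the values $\mu_{\beta'}(\phi)$ stabilize (so $\phi\in\Phi_\bullet$), and that the stabilized value equals $\delta$ rather than something strictly smaller. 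Step (iii) -- keeping track of $\phi_{\beta'}$-expansions and deciding which cancellations persist as $\beta'$ grows -- is the delicate bookkeeping, and is essentially the computation carried out by Vaqui\'e in the pages leading up to Lemme 1.17. With (1) and (2) in hand, the ``in particular'' follows as explained in the first paragraph.
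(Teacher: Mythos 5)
The paper offers no proof of this statement at all: it is quoted verbatim from Vaqui\'e (\cite{V}, Lemme 1.17), so there is nothing in-paper to compare with, and a blind proof would have to reconstruct Vaqui\'e's argument. Measured against that, your proposal has a genuine gap exactly where you locate ``the substance'': assertion (2) is never proved. Your steps (i)--(iii) are a plan, not an argument --- you do not exhibit, for an arbitrary $\delta\in\,]\gamma,\gamma_\alpha]$, a monic polynomial of degree $d$ in $\Phi_\bullet$ whose values $\mu_{\beta'}(\phi)$ stabilize at exactly $\delta$, and you explicitly defer the decisive point (iii) to ``the computation carried out by Vaqui\'e''. Since (2) is the whole content of the lemma (both (1) and the ``in particular'' are comparatively formal), the proposal does not establish the proposition; it only reduces it to the reference it was supposed to replace.

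There is also a flaw in your argument for (1). The general principle you invoke --- that the value group of $\mu'=[\mu;\mu'(\phi)=\gamma]$ is the subgroup generated by the value group of $\mu$ together with $\gamma$ --- is false as stated: from Definition \ref{augmvaluation}, the value group of $\mu'$ is generated by $\gamma$ and the $\mu$-values of polynomials of degree $<\deg\phi$, and this group need not contain $\mu(\phi)$, hence need not contain the value group of $\mu$ (take $\mu$-values of low-degree polynomials in $\Z$, $\mu(\phi)=\tfrac12$, $\gamma=\tfrac23$). So the inclusion ``value group of $\mu_{\alpha_0}$ $\subseteq$ value group of $\mu_\alpha$'' does not follow from your formula; to get it one must use the continued-family structure, e.g.\ that for $\beta<\beta'$ in $A$ one has $\gamma_\beta=\mu(\phi_\beta-\phi_{\beta'})$ with $\deg(\phi_\beta-\phi_{\beta'})<d$, so that every $\gamma_\beta$, and likewise the stabilized value $\gamma_\alpha$, already lies in the group generated by $\mu$-values in degree $<d$. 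The conclusion of (1) is true, and your deduction of exhaustiveness from (1) and (2) is fine, but as written the proof of (1) rests on an incorrect lemma and the proof of (2) is absent.
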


\begin{cor}\label{exhaustion}
Proposition \ref{exhaustive} shows that for any continued family $F=\left\{\mu_{\alpha}\right\}_{\alpha\in A}$ of augmented iterated valuations we can always add new valuations to the family in order to make the resulting family $Exh(F)$ exhaustive.
\end{cor}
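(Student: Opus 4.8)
The plan is to read the statement off from Proposition \ref{exhaustive} applied to the family $Exh(F)$ built in the discussion just before it. First I would recall what that construction produces: $Exh(A)$ indexes (via the order isomorphism $\gamma$) the set of stabilised values $\Lambda_\bullet$, for each $\alpha\in Exh(A)$ one picks a monic polynomial $\phi_\alpha\in\Phi_\bullet$ of degree $d$ with $\mu_\beta(\phi_\alpha)=\gamma_\alpha$ for $\beta\in A$ large, and the corresponding valuation is the augmented valuation $\mu_\alpha=[\mu_{\alpha_0};\mu_\alpha(\phi_\alpha)=\gamma_\alpha]$ for any $\alpha_0\in A$ with $\mu_{\alpha_0}(\phi_\alpha)<\gamma_\alpha$. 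The discussion there records that $Exh(F)$ is again a continued family of augmented iterated valuations, so Proposition \ref{exhaustive} applies to it and its final assertion is precisely that $Exh(F)$ is exhaustive. Hence all that remains is to see that $Exh(F)$ is obtained from $F$ by \emph{adjoining} valuations, i.e. that $F\subseteq Exh(F)$.

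For the latter I would show that for every $\alpha\in A$ other than the least element the key polynomial $\phi_\alpha$ of $F$ lies in $\Phi_\bullet$ with stabilised value $\gamma_\alpha$. Since $\phi_\alpha$ and $\phi_\beta$ are monic of the same degree $d$, we have $\phi_\alpha=\phi_\beta+(\phi_\alpha-\phi_\beta)$ with $\deg(\phi_\alpha-\phi_\beta)<d$, and as each $\mu_\beta$ agrees with $\mu$ on polynomials of degree $<d$, dividing $\phi_\alpha$ by $\phi_\beta$ gives $\mu_\beta(\phi_\alpha)=\min\{\mu(\phi_\alpha-\phi_\beta),\gamma_\beta\}$. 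Because $\Lambda=\{\gamma_\alpha\}_{\alpha\in A}$ is cofinal and increasing, this is eventually constant in $\beta$, equal to $\gamma_\alpha$ (using the standard identities for continued families), while for $\beta$ small it takes a strictly smaller value; hence $\phi_\alpha\in\Phi_\bullet$. Choosing $\alpha_0\in A$ with $\mu_{\alpha_0}(\phi_\alpha)<\gamma_\alpha$ and invoking the fact that an iterated augmentation can be carried out in one step --- passing from $\mu$ to $\mu_\alpha$ is the same as passing $\mu\to\mu_{\alpha_0}\to\mu_\alpha$ --- we obtain $\mu_\alpha=[\mu_{\alpha_0};\mu_\alpha(\phi_\alpha)=\gamma_\alpha]$, which is exactly the member of $Exh(F)$ attached to the value $\gamma_\alpha$ (the particular representative in $\Phi_\bullet$ used there being immaterial, since any two such representatives of a given stabilised value differ by a polynomial of degree $<d$ and value $\ge\gamma_\alpha$). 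The least element of $A$ is handled directly: $\mu_{\alpha_{\min}}=[\mu;\mu_{\alpha_{\min}}(\phi_{\alpha_{\min}})=\gamma_{\alpha_{\min}}]$ already sits at the bottom of $Exh(F)$. This yields $F\subseteq Exh(F)$ and the corollary follows.

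Exhaustivity itself costs nothing, being handed to us by the last line of Proposition \ref{exhaustive}; the only real content --- and the main obstacle --- is the inclusion $F\subseteq Exh(F)$. Within it the delicate points are the value computation showing $\phi_\alpha\in\Phi_\bullet$ with the correct stabilised value, the independence of the construction from the choice of representative polynomial, and the one-step compatibility of iterated augmentations, which is what guarantees that the valuation reconstructed inside $Exh(F)$ is genuinely $\mu_\alpha$ rather than merely a valuation agreeing with it on $\phi_\alpha$.
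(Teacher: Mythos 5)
Your proposal is correct and follows the same route as the paper: the paper gives no separate proof of this corollary at all, treating it as an immediate reading of the construction of $Exh(F)$ described before Proposition \ref{exhaustive} together with that proposition's final assertion of exhaustivity. What you add beyond the paper is the explicit verification that $F\subseteq Exh(F)$, i.e.\ that each original $\phi_\alpha$ lies in $\Phi_\bullet$ with stabilised value $\gamma_\alpha$ and that the valuation attached to $\gamma_\alpha$ in $Exh(F)$ is genuinely $\mu_\alpha$; the paper (following Vaqui\'e's discussion preceding his lemme 1.17) leaves this implicit in the phrase ``add new valuations to the family.'' Your computation $\mu_\beta(\phi_\alpha)=\min\{\mu(\phi_\alpha-\phi_\beta),\gamma_\beta\}$ and the eventual stabilisation at $\gamma_\alpha$ are consistent with the standard identities for continued families, and the identification of the reconstructed augmented valuation with $\mu_\alpha$ is exactly what Proposition \ref{sameaugmentedvaluation} (and the one-step compatibility of iterated augmentations) provides, so the appeals you make there are legitimate citations rather than gaps; just be aware that these facts are imported from Vaqui\'e rather than reproved, which is also how the paper treats them.
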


Let $f$ and $g$ be two polynomials of $K[x]$. We say that $f$ $A$-divides $g$ or that $g$ is $A$-divisible by $f$, if there exists $\alpha_{0}\in A$ such that $f$ $\mu_{\alpha}$-divides $g$ for all $\alpha\in A$ with $\alpha>\alpha_{0}$.\\

\begin{deft}
(\cite{V}, page 3465)
	A polynomial $\phi$ of $K[x]$ is said to be a \textbf{limit key polynomial} for the family of valuations $\{\mu_{\alpha}\}_{\alpha\in A}$ if $\phi$ has the following properties:
\begin{itemize}
	\item $\phi$ is monic.
	\item $\phi$ is $A$-minimal, that is to say any polynomial $f$ $A$-divisible by $\phi$ is of degree greater than or equal to $\phi$
	\item $\phi$ is $A$-irreducible, that is to say: for all $f$, $g$ in $K[x]$, if $\phi$ $A$-divides $fg$, then $\phi$ $A$-divides $f$ or $\phi$ $A$-divides $g$.
\end{itemize}
\end{deft}

\begin{exam} For an example of a limit key polynomial, we refer the reader to \cite{V}, page 3478. Another example is given at the end of this paper. One advantage of our example over that of \cite{V} is that it features valuations $\nu$ and $\mu$ centered in a local noetherian ring.
\end{exam}

Now take a family $\{\mu_{\alpha}\}_{\alpha\in A}$ of augmented iterated valuations.\\

\begin{rek}\label{minimaldegreelimit}It can be proved that every monic polynomial $\phi$ satisfying $\mu_{\alpha}(\phi)<\mu_{\beta}(\phi)$ for all $\alpha<\beta$ in $A$, and with a minimal degree among those satisfying this inequality, is a limit key polynomial for the family (\cite{V}, page 3465, Proposition 1.21).
\end{rek}
\medskip

We want to define a valuation $\mu'$ of $K[x]$ starting from the family of augmented iterated valuations $\{\mu_{\alpha}\}_{\alpha\in A}$, from a limit key polynomial $\phi$ for the family $\{\mu_{\alpha}\}_{\alpha\in A}$, and from a value $\lambda$ in $\Gamma'$ that satisfies $\lambda>\mu_{\alpha}(\phi)$ for all $\alpha\in A$.
\\
Consider an $f$ in $K[x]$ such that: there exists $\alpha_{0}\in A$ with $\mu_{\alpha}(f)$ constant for all $\alpha\in A$ such that $\alpha\geq\alpha_{0}$.
We denote
	\[\mu_{A}(f)=\mu_{\alpha_{0}}(f)=\sup\{\mu_{\alpha}(f)\ |\ \alpha\in A\}.
\]

Put $f=f_{m}\phi^{m}+f_{m-1}\phi^{m-1}+...+f_{0}$, then define $\mu'$ as:
	\[ \mu'(f)=\inf\left\{\mu_{A}(f_{j})+j\lambda; 0\leq j\leq m\right\}.
\]
As $\deg(f_{j})<\deg(\phi)$ for all $0\leq j\leq m$ then $\mu_{A}(f_{j})$ is well defined for all $0\leq j\leq m$.
\begin{deft}
	We call the valuation $\mu'$ defined above the \textbf{limit augmented valuation} for the family $\{\mu_{\alpha}\}_{\alpha\in A}$. We denote $\mu'=[(\mu_{\alpha})_{\alpha\in A};\mu'(\phi)=\gamma]$.
\end{deft}

\begin{deft}
(\cite{V}, page 3471).
	A family $S$ of augmented iterated valuations is said to be a \textbf{simple admissible family} if it has the form $S=\{\mu_{i}\}_{i\in I}$, where the set of indices $I$ is the disjoint union $I=B\cup A$, with $B\subset\N$ and $A$ a totally ordered set possibly empty, where the total order on the set $I$ is defined by $i<\alpha$ for all $i$ in $B$ and for all $\alpha$ in $A$, and the following properties hold:
	
\begin{itemize}
	\item For $i\in B$, $i\geq 2$, we have the inequality  $\deg\phi_{i}>\deg\phi_{i-1}$.
	\item If $A\neq \emptyset$, then $B$ is finite, $B=\left\{ 1,...,n\right\}$ and for $\alpha\in A$, we have $\deg \phi_{\alpha}=\deg\phi_{n}$, and the family $\{\mu_{\alpha}\}_{\alpha\in A}$ is an exhaustive family of augmented iterated valuations.
\end{itemize}
	If the set $A$ is empty, i.e if the set of indices is a subset $I$ of $\N$, we say that the family $S=\{\mu_{i}\}_{i\in I}$ is a simple discrete admissible family.
\end{deft}

\begin{deft}
(\cite{V}, page 3472).
	A family of valuations $S=\{\mu_{i}\}_{i\in I}$ is said to be \textbf{admissible} if it is a union of a finite set or a countable set of admissible simple families $S^{(t)}=\left\{\mu_{i}^{(t)}\right\}_{i\in I^{(t)}}$, with $1\leq t<N$ where $N \in\N\bigcup \left\{+\infty\right\}$, and $I^{(t)}=\left\{ 1^{(t)},...,n^{(t)}\right\}\bigcup A^{(t)}$, satisfying:	
\begin{itemize}
	\item All the simple admissible families $S^{(t)}=\left\{\mu_{i}^{(t)}\right\}_{i\in I^{(t)}}$, except possibly the last one in the case $N < + \infty$, are non-discrete simple admissible families.
	\item The first valuation of the family, i.e. the first valuation $\mu_{1}^{(1)}$ of the first simple admissible family $S^{(1)}$, is an augmented  valuation of the valuation $\nu$ of the field $K$ constructed with the key polynomial $\phi_{1}^{(1)}$ of degree one.
	\item For $t\geq 2$, the first valuation $\mu_{1}^{(t)}$ of the simple admissible family $S^{(t)}$ is the limit augmented valuation for the family of valuations $\left\{\mu_{\alpha}^{(t-1)}\right\}_{\alpha\in A^{(t-1)}}$.
\end{itemize}
\end{deft}
\begin{rek}
 The set of indices $I=\bigcup\limits_{t=1}^{N} I^{(t)}$ is totally ordered by $i<j$ for all $i\in I^{(t)}$ and $j\in I^{(s)}$, if $t<s$.
\end{rek}

\begin{prop}\label{admissible_order}
(\cite{V}, page 3472)
 For all the polynomials $f$ in $K[x]$ the family $\{\mu_{i}(f)\}_{i\in I}$ is increasing, which means that for all $i<j$ in $I$, we have $\mu_{i}(f)\leq\mu_{j}(f)$.\\
 Furthermore, if there exists $i<j$ in $I$ such that $\mu_{i}(f)=\mu_{j}(f)$, then for all $k\geq i$, we have also the equality $\mu_{i}(f)=\mu_{k}(f)$.

\end{prop}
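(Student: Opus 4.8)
My plan is to reduce the statement to the two elementary operations out of which an admissible family is assembled --- passage from a valuation $\mu$ of $K[x]$ to an augmented valuation $[\mu;\mu'(\phi)=\gamma]$, and passage from a continued family $\{\mu_\alpha\}_{\alpha\in A}$ to a limit augmented valuation $[(\mu_\alpha)_{\alpha\in A};\mu'(\phi)=\lambda]$ --- and in each case to determine \emph{precisely when} the value of a fixed $f\in K[x]$ strictly increases. Since every $\mu_j$ other than the first valuation of $S^{(1)}$ is, by the definition of an admissible family, either an augmented valuation of an earlier member of the family or the limit augmented valuation of a (continued, exhaustive) subfamily of earlier members, both halves of the proposition will follow from three facts: \textbf{(a)} if $\phi$ is a key polynomial for $\mu$ and $\gamma>\mu(\phi)$, then $[\mu;\mu'(\phi)=\gamma]\ge\mu$ pointwise, with equality on $f$ exactly when the $\phi$-free part of $f$ realises $\mu(f)$; \textbf{(b)} the analogue for a limit augmented valuation, namely $[(\mu_\alpha)_\alpha;\mu'(\phi)=\lambda]\ge\mu_\alpha$ for every $\alpha$, with the same description of equality ($\mu_A$ replacing $\mu$); and \textbf{(c)} $\mu_\alpha\le\mu_\beta$ pointwise for $\alpha\le\beta$ in a continued family, together with the fact that inside such a family equality of $\mu_\alpha(f)$ and $\mu_\beta(f)$ for one pair $\alpha<\beta$ forces constancy of $\mu_\gamma(f)$ for $\gamma\ge\alpha$.

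For \textbf{(a)} I would expand $f=\sum_{\ell=0}^m f_\ell\phi^\ell$ with $\deg f_\ell<\deg\phi$: one has $\mu(f)=\min_\ell\{\mu(f_\ell)+\ell\mu(\phi)\}$ (the no-cancellation property enjoyed by a key polynomial) and $\mu'(f)=\min_\ell\{\mu(f_\ell)+\ell\gamma\}$, and since $\gamma>\mu(\phi)$ these compare term by term; so $\mu'(f)\ge\mu(f)$, with equality iff the minimum on the right is realised at $\ell=0$, i.e. $\mu(f)=\mu(f_0)$, in which case moreover $\mu'(f-f_0)>\mu'(f_0)$. Fact \textbf{(c)} I would take from Vaquié's analysis of continued families in \cite{V}; combined with Proposition \ref{exhaustive} it also records that all the $\mu_\alpha$ share the value group $\Gamma_\bullet$. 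Fact \textbf{(b)} is the one substantive point. Taking $\phi$ of minimal degree among limit key polynomials (Remark \ref{minimaldegreelimit}), so that $\alpha\mapsto\mu_\alpha(\phi)$ is strictly increasing and $\mu_\alpha(g)$ is constant along a final segment of $A$ whenever $\deg g<\deg\phi$, I expand $f=\sum_\ell f_\ell\phi^\ell$ ($\deg f_\ell<\deg\phi$), so that $\mu'(f)=\inf_\ell\{\mu_A(f_\ell)+\ell\lambda\}$ is well defined. The crux is the absence of cancellation in the $\phi$-adic expansion for $\alpha$ in that final segment, i.e. $\mu_\alpha(f)=\min_\ell\{\mu_\alpha(f_\ell)+\ell\mu_\alpha(\phi)\}$: were the terms of two indices $\ell_1\neq\ell_2$ equal in $\mu_\alpha$-value for all such $\alpha$, then $(\ell_2-\ell_1)\mu_\alpha(\phi)$ would equal the constant $\mu_A(f_{\ell_1})-\mu_A(f_{\ell_2})$, contradicting the strict growth of $\mu_\alpha(\phi)$. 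Granting this, $\mu_\alpha(f)\le\min_\ell\{\mu_A(f_\ell)+\ell\mu_\alpha(\phi)\}\le\mu'(f)$ for $\alpha$ large, hence for all $\alpha$ by \textbf{(c)}, and the equality case is extracted just as in \textbf{(a)}.

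Granting \textbf{(a)}, \textbf{(b)}, \textbf{(c)}, monotonicity of $\{\mu_i(f)\}_{i\in I}$ follows by transitivity along any chain from $i$ to $j$. For the second assertion, suppose $\mu_i(f)=\mu_j(f)$ with $i<j$, so that $\mu_k(f)=v$ for all $k\in[i,j]$, where $v:=\mu_i(f)$. If $[i,j]$ contains an augmented or limit-augmented step, I pick one; it is automatically flat, so the equality case of \textbf{(a)} or \textbf{(b)} applies and produces, for the valuation $\mu_s$ obtained by that step (with $i<s\le j$), a $\phi$-free part $f_0$ of $f$ with $\deg f_0<\deg\phi=:d$, with $\mu_s(f_0)=v$, and with $\mu_s(f-f_0)>v$; if instead $[i,j]$ lies entirely inside a continued subfamily I invoke \textbf{(c)} to get flatness within it and pass to a flat step immediately above it. Now comes the structural observation: in an admissible family the degrees of the key polynomials (ordinary and limit) are non-decreasing along $I$ and increase strictly at every limit step, so every key polynomial occurring at a step $\ge s$ has degree $\ge d>\deg f_0$. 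Hence the expansion of $f_0$ with respect to each of these is trivial, so $\mu_k(f_0)=v$ for all $k\ge s$ (at a limit step this is a supremum of equal values), while $\mu_k(f-f_0)\ge\mu_s(f-f_0)>v=\mu_k(f_0)$ by monotonicity; therefore $\mu_k(f)=v$ for all $k\ge s$, and since $\mu_k(f)=v$ already for $i\le k\le s$, for all $k\ge i$.

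The conceptual picture --- a step raises $\mu(f)$ exactly when it removes the $\phi$-free summand that was realising the value, and that summand, of degree below every subsequent key polynomial, is frozen from then on --- is clean, so I expect the main obstacle to be twofold and largely technical. First, the no-cancellation assertion underlying \textbf{(b)} is precisely where the minimality of the degree of a limit key polynomial must be used, and it rests on the constancy, along a final segment of $A$, of the values of polynomials of degree $<\deg\phi$; I would isolate this as a lemma on continued families. Second, I must make the passage ``from $i$ to $j$'' and the propagation of $\mu_k(f)=\mu_k(f_0)$ precise inside an index set $I$ that need not be well ordered because of continued subfamilies; the clean way is to treat each continued subfamily as a black box (via \textbf{(c)} and the lemmas of \cite{V}) and run a transfinite induction on the well ordered skeleton of $I$ described in the introduction, with \textbf{(a)} at successor steps and \textbf{(b)} together with the continued-family lemmas at limit steps.
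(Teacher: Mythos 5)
The paper gives no proof of this proposition: it is quoted from Vaqui\'e (\cite{V}, page 3472) and used as a black box, so there is no internal argument to compare yours against. Judged on its own, your reconstruction follows the standard MacLane--Vaqui\'e line and is correct in outline: the reduction to the two elementary steps (augmentation and limit augmentation), and the propagation mechanism for the second assertion --- a flat step yields, via the equality case, a $\phi$-free part $f_0$ with $\deg f_0<\deg\phi$ realising the value, which is then frozen because every later key polynomial has degree $\ge\deg\phi$, while $f-f_0$ keeps strictly larger value by monotonicity --- is exactly the right mechanism (it is the same phenomenon the paper exploits in Lemma \ref{lemme1.4} and Lemma \ref{lemme_limit}). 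Three caveats. First, your fact \textbf{(a)} rests on the identity $\mu(f)=\min_\ell\{\mu(f_\ell)+\ell\mu(\phi)\}$ for the $\phi$-adic expansion; this is true but is itself a lemma (a cancellation among the terms of minimal value would make $\init_\mu\phi$ divide $\init_\mu f_{\ell_0}$ with $\deg f_{\ell_0}<\deg\phi$, contradicting $\mu$-minimality), so state or cite it precisely. Second, your fact \textbf{(c)} is literally the special case of the proposition for a continued subfamily, so citing Vaqui\'e for it is circular in spirit; fortunately your own freezing argument proves it, since all members of a continued family agree with the common base valuation on polynomials of degree less than the common degree $d$. Third, the genuinely substantive inputs you must supply (and do correctly flag) are the stabilization along $A$ of the values of polynomials of degree $<\deg\phi$ together with the strict growth of $\mu_\alpha(\phi)$ for the limit key polynomial, and the fact that the limit key polynomial has degree at least $d$; note that the paper's remark after Definition \ref{def_admitted} gives the latter only for admitted families, so for a merely admissible family you must again invoke Vaqui\'e's results on exhaustive continued families. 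With those lemmas supplied, your argument goes through.
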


\begin{deft}\label{def_admitted}
(\cite{V}, page 3473)
	An admissible family of valuations $F=\{\mu_{j}\}_{j\in I}$ of $K[x]$ is said to be an \textbf{admitted family} for the valuation $\mu$ if it has the following properties:
\begin{itemize}
	\item	For all $j$ in $I$ and all $f$ in $K[x]$, $\mu_{j}(f)\leq\mu(f)$, and we have the equality $\mu_{j}(f)=\mu(f)$ for $f$ of degree strictly less than the degree of the key polynomial $\phi_{j}$ defining the valuation $\mu_{j}$.
	\item If $S^{(t)}=\left\{\mu_{i}^{(t)}\right\}_{i\in I^{(t)}}$ is a non-discrete simple admissible family contained in $F$,
$$
I^{(t)}=B^{(t)}\bigcup A^{(t)},
$$
then for all $\theta\in A^{(t)}$ we have the equality of sets\\
$\left\{\mu_{\alpha}(\phi_{\alpha})\ |\ \alpha\in A^{(t)}, \alpha>\theta\right\}=\left\{\mu(\phi)\ |\ \phi\ \text{monic}, \deg(\phi)=\deg(\phi_{\theta}), \mu_{\theta}(\phi)<\mu(\phi) \right\}$.
\end{itemize}	
\end{deft}

\begin{rek}
From the second condition of the definition above, we notice that the Vaqui\'e limit key polynomial $\phi_{1^{(t+1)}}$ has degree on $x$ strictly greater than the degree of any polynomial $\phi_{\alpha}$, with $\alpha\in A^{(t)}$.
\end{rek}

\begin{deft}
	We say that an admitted family $F=\{\mu_{j}\}_{j\in I}$ for the valuation $\mu$ \textbf{converges to $\mu$} if for all $f\in K[x]$ there exists a $j\in I$ such that $\mu(f)=\mu_j(f)$, which is equivalent to saying that for all $f\in K[x]$ we have:
$$
	\mu(f)=\lim\limits_{j}\mu_{j}(f)=Max\left\{\mu_{j}(f), j\in I\right\}.
$$
\end{deft}

\begin{thm}
(\cite{V}, page 3475)
For all valuation $\mu$ of $K[x]$ extending a valuation $\nu$ of $K$, there exists an admissible family of valuations $(\mu_{i})_{i\in I}$ that converges to $\mu$.
\end{thm}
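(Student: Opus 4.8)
The plan is to construct the family by transfinite recursion, in the spirit of MacLane's inductive valuations and the layered structure of Definitions \ref{augmentediterated}--\ref{def_admitted}; the family produced will in fact be \emph{admitted} for $\mu$ in the sense of Definition \ref{def_admitted}, hence admissible, and the last thing to check will be that it converges to $\mu$. One starts with a degree-one augmented valuation of $\nu$: choose $a\in K$ with $\mu(x-a)$ maximal among $\{\mu(x-b)\mid b\in K\}$ if such a maximum exists (otherwise $a=0$), put $\phi^{(1)}_{1}=x-a$, $\gamma^{(1)}_{1}=\mu(x-a)$, and let $\mu^{(1)}_{1}$ be given by $\mu^{(1)}_{1}(\sum_{j}c_{j}(x-a)^{j})=\min_{j}\{\nu(c_{j})+j\gamma^{(1)}_{1}\}$; then $\mu^{(1)}_{1}\le\mu$ and $\mu^{(1)}_{1}$, $\mu$ agree on $K$. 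Throughout the recursion one carries a ``current'' valuation $\mu_{\star}$ of $K[x]$ --- an augmented or limit augmented valuation with (limit) key polynomial $\phi_{\star}$ --- subject to the invariant: $\mu_{\star}\le\mu$, and $\mu_{\star}(g)=\mu(g)$ for all $g\in K[x]$ with $\deg g<\deg\phi_{\star}$.

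At each step, if $\mu_{\star}=\mu$ we stop; otherwise let $d^{\star}$ be the least degree of a monic polynomial improving $\mu_{\star}$ (i.e.\ with $\mu_{\star}(\phi)<\mu(\phi)$) and let $W=\{\mu(\phi)\mid\phi\text{ monic},\ \deg\phi=d^{\star},\ \mu_{\star}(\phi)<\mu(\phi)\}$. If $W$ has a maximum, pick $\phi_{\mathrm{new}}$ of degree $d^{\star}$ realising it and set $\mu_{\mathrm{new}}=[\mu_{\star};\mu_{\mathrm{new}}(\phi_{\mathrm{new}})=\mu(\phi_{\mathrm{new}})]$; the minimality of $d^{\star}$ together with $\mu_{\star}\le\mu$ shows $in_{\mu_{\star}}\phi_{\mathrm{new}}$ is a homogeneous prime of $G_{\mu_{\star}}$, so $\phi_{\mathrm{new}}$ is a Vaqui\'e key polynomial for $\mu_{\star}$, the invariant passes to $\mu_{\mathrm{new}}$ with $\deg\phi_{\mathrm{new}}=d^{\star}>\deg\phi_{\star}$, and the maximality forces $\mu_{\mathrm{new}}$ to have no improving monic polynomial of degree $d^{\star}$ --- so iterating this case builds the discrete part $B^{(t)}$ of a simple family, with strictly increasing degrees. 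If instead $W$ has no maximum, perform (if $\deg\phi_{\star}<d^{\star}$) one ordinary augmentation to reach a valuation $\mu^{(t)}_{n}$ whose key polynomial has degree $d^{\star}$, then let $\gamma_{\alpha}=\mu(\phi_{\alpha})$ run over the $\mu$-values of \emph{all} monic degree-$d^{\star}$ polynomials improving $\mu^{(t)}_{n}$, with $\mu_{\alpha}=[\mu^{(t)}_{n};\mu_{\alpha}(\phi_{\alpha})=\gamma_{\alpha}]$; this is a continued family of augmented iterated valuations of common degree $d^{\star}$, which we replace by its exhaustion (Corollary \ref{exhaustion}) to get the continued part $A^{(t)}$. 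If $A^{(t)}$ converges to $\mu$ we stop; otherwise a monic polynomial of least degree whose $\mu_{\alpha}$-values do not stabilise at their value under $\mu$ is a limit key polynomial $\phi$ for $A^{(t)}$ (Remark \ref{minimaldegreelimit}), necessarily of degree $>d^{\star}$, and $\mu^{(t+1)}_{1}=[(\mu_{\alpha})_{\alpha};\mu^{(t+1)}_{1}(\phi)=\mu(\phi)]$ starts the next simple family $S^{(t+1)}$; resume.

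Each $S^{(t)}$ is a simple admissible family (strictly increasing degrees in $B^{(t)}$, exhaustive $A^{(t)}$ when non-empty, and only the last family may be discrete since $S^{(t+1)}$ only arises from a limit key polynomial of $A^{(t)}$), and there are finitely or countably many of them since degrees strictly increase from $S^{(t)}$ to $S^{(t+1)}$; so $F=\{\mu_{i}\}_{i\in I}$ is an admissible (indeed admitted) family. For convergence, fix $f$ of degree $m$. If $F$ contains a (limit) key polynomial of degree $>m$, the invariant gives $\mu_{i}(f)=\mu(f)$ at the corresponding $i$. As degrees are non-decreasing and increase strictly both inside each $B^{(t)}$ and at each passage $S^{(t)}\to S^{(t+1)}$, the only alternative is that all (limit) key polynomials have degree $\le m$, whence $F$ has finitely many simple families each with finite discrete part, so $F$ is finite --- ending at $\mu$ by the stopping rule --- or ends in a single exhaustive continued family that was not further continued, which by the dichotomy above is possible only if that family converges to $\mu$. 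In either case $\mu_{i}(f)=\mu(f)$ for some $i$.

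The crux --- and the main obstacle --- is the analysis underlying the dichotomy in the no-maximum case: that an exhaustive continued family failing to converge to $\mu$ must carry a limit key polynomial (so the recursion always proceeds and the degree genuinely jumps), and dually that one with no limit key polynomial converges to $\mu$. Here exhaustiveness is used essentially (the $\gamma_{\alpha}$ are precisely the $\mu$-values of all degree-$d^{\star}$ improvements): the key computation is that for a monic $\phi$ of degree $d^{\star}$, once $\mu_{\alpha}(\phi)$ becomes constant it must equal $\mu(\phi)$ --- write $\phi=\phi_{\alpha}+r_{\alpha}$ with $\deg r_{\alpha}<d^{\star}$ and compare $\mu$- and $\mu_{\alpha}$-values once $\gamma_{\alpha}$ is large --- so the only witnesses to non-convergence have degree $>d^{\star}$ with strictly increasing $\mu_{\alpha}$-values, i.e.\ are limit key polynomials in the sense of Remark \ref{minimaldegreelimit}. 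A secondary difficulty is making the transfinite recursion rigorous when the index sets $A^{(t)}$ are not well ordered (one leans on Corollary \ref{exhaustion} and on well-definedness of the limit augmented valuation), and checking that the invariant ``$\mu_{\star}=\mu$ below $\deg\phi_{\star}$'' survives passage through a limit augmented valuation, which uses the minimal-degree choice of the limit key polynomial.
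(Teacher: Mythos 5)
First, a caveat: the paper does not prove this statement at all — it is quoted from Vaqui\'e (\cite{V}, page 3475) purely as background, so there is no in-paper proof to compare with. Your outline is, in substance, a reconstruction of Vaqui\'e's original argument: build an admitted family recursively, taking ordinary augmentations with strictly increasing degrees as long as the set $W$ of values of improving monic polynomials of minimal degree $d^{\star}$ has a maximum, switching to an exhaustive continued family of constant degree $d^{\star}$ when it does not, and starting the next simple family with a limit augmented valuation; convergence then follows from the invariant ``$\mu_{\star}=\mu$ in degree $<\deg\phi_{\star}$''. That architecture is the right one and is consistent with Definitions \ref{augmentediterated}--\ref{def_admitted}.

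As a proof, however, the pivotal steps are asserted rather than established, and these are precisely where the real work lies. (a) The claim that a monic polynomial of minimal degree with $\mu_{\star}(\phi)<\mu(\phi)$ is a Vaqui\'e key polynomial for $\mu_{\star}$ is not obvious: $\mu_{\star}$-minimality is easy, but the $\mu_{\star}$-irreducibility (your ``$in_{\mu_{\star}}\phi_{\mathrm{new}}$ is a homogeneous prime'') is a genuine lemma of Vaqui\'e and needs an argument, e.g. via Euclidean division and the fact that $\mu_\star$ and $\mu$ agree in degree $<d^{\star}$. (b) Your ``key computation'' in the continued case only shows that every monic polynomial of degree exactly $d^{\star}$ eventually attains its $\mu$-value. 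To invoke Remark \ref{minimaldegreelimit} you need more: a non-convergent polynomial of minimal degree (necessarily of degree $>d^{\star}$) must have \emph{strictly increasing} values $\mu_{\alpha}(f)$; you must exclude the case where $\mu_{\alpha}(f)$ stabilises at a value strictly below $\mu(f)$, which Remark \ref{minimaldegreelimit} does not cover. This can be done (fix a large $\alpha_{1}$, divide by $\phi_{\alpha_{1}}$, and use that quotient, remainder and $\phi_{\alpha_{1}}$ are all convergent by minimality of $\deg f$ and the degree-$d^{\star}$ computation), but it is a step, not a remark, and your text simply asserts it. (c) The survival of the invariant through a limit augmented valuation — that $\mu_{1}^{(t+1)}$ still agrees with $\mu$ on all polynomials of degree less than that of the limit key polynomial, which you need both to keep the recursion going and in the final convergence argument — is flagged as a ``secondary difficulty'' but not carried out; it requires an argument along the lines of Lemma \ref{lemme_limit} together with the minimal-degree choice of the limit key polynomial. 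Until (a)--(c) are supplied, the proposal is a correct plan (essentially Vaqui\'e's) rather than a complete proof.
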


We recall the statement of lemme 1.4 in \cite{V}.

\begin{lem}\label{lemme1.4}(\cite{V}, page 3443)
Let $\mu'$ be the valuation defined by the valuation $\mu$, the key polynomial $\phi$, and the value $\gamma \in \Gamma$, i.e $\mu'=[\mu;\mu'(\phi)=\gamma]$. Then for all $f$ in $K[x]$ satisfying $\mu(f)=\mu'(f)$, we have:
\begin{enumerate}
\item there exists $h$ in $K[x]$ with $\deg h<\deg\phi$ such that $in_{\mu'}f=in_{\mu'}h$.
\item there exists $g$ in $K[x]$ with $\mu'(g)=\mu(g)$ such that $in_{\mu'}fg=in_{\mu'}1$.
\end{enumerate}

\end{lem}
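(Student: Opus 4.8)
We may assume $f\neq 0$, so that $\mu(f)=\mu'(f)$ is a genuine element of $\Gamma$. The plan is to settle (1) by a direct computation with the $\phi$-expansion of $f$, and then to reduce (2) to the case $\deg_x f<\deg_x\phi$ and argue by induction on $\deg_x f$, using a Euclidean division by $f$; the decisive point will be a value comparison in that division.

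\emph{Step 1 (assertion (1)).} I would write the $\phi$-expansion $f=f_m\phi^m+\cdots+f_0$ with $\deg_x f_j<\deg_x\phi$, so that $\mu'(f)=\min_{0\le j\le m}\{\mu(f_j)+j\gamma\}$ by definition, while $\mu(f)=\min_{0\le j\le m}\{\mu(f_j)+j\mu(\phi)\}$ by the additivity of a key polynomial (established in \cite{V} before this lemma). Since $\gamma>\mu(\phi)$, for each $j\ge 1$ one gets $\mu(f_j)+j\gamma>\mu(f_j)+j\mu(\phi)\ge\mu(f)=\mu'(f)$, so the minimum computing $\mu'(f)$ is attained, strictly, only at $j=0$. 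Hence $\mu(f_0)=\mu'(f)=\mu(f)$ and $in_{\mu'}f=in_{\mu'}f_0$, and (1) holds with $h:=f_0$; note for later use that $h\neq 0$, $\deg_x h<\deg_x\phi$ and $\mu'(h)=\mu(h)$.

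\emph{Step 2 (assertion (2)).} Because $in_{\mu'}f=in_{\mu'}h$ and $\mu'$ is multiplicative, it suffices to produce $g$ with $\mu'(g)=\mu(g)$ and $in_{\mu'}(hg)=in_{\mu'}1$; so I would assume outright that $d:=\deg_x\phi>\deg_x f=:e$ with $\mu'(f)=\mu(f)$, and induct on $e$. For $e=0$ we have $f\in K^*$ and $g:=f^{-1}$ works. For $e\ge 1$, carry out the Euclidean division $\phi=qf+r$ with $\deg_x r<e$; then $\deg_x q=d-e<d$, so the $\phi$-expansions of $q$ and $r$ are trivial, giving $\mu'(q)=\mu(q)$, $\mu'(r)=\mu(r)$ and $\mu'(qf)=\mu'(q)+\mu'(f)=\mu(q)+\mu(f)=\mu(qf)$. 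The crux is the comparison of values in $\phi=qf+r$: since $\mu'(\phi)=\gamma>\mu(\phi)=\mu(qf+r)\ge\min\{\mu(qf),\mu(r)\}=\min\{\mu'(qf),\mu'(r)\}$, the ultrametric law forces $\mu'(qf)=\mu'(r)$ and then $in_{\mu'}(qf)=-\,in_{\mu'}(r)$ in $G_{\mu'}$. Also $r\neq 0$: otherwise $\phi=qf$ with $\deg_x q,\deg_x f<d$, and $\mu$-irreducibility of $\phi$ would make $\phi$ $\mu$-divide $q$ or $f$, contradicting $\mu$-minimality. Applying the induction hypothesis to $r$ (legitimate, since $\deg_x r<e$ and $\mu'(r)=\mu(r)$ automatically as $\deg_x r<d$) yields $s$ with $\mu'(s)=\mu(s)$ and $in_{\mu'}(rs)=in_{\mu'}1$. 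Then $g:=-qs$ satisfies $\mu'(g)=\mu'(q)+\mu'(s)=\mu(q)+\mu(s)=\mu(g)$ and $in_{\mu'}(fg)=-\,in_{\mu'}(qf)\,in_{\mu'}(s)=in_{\mu'}(r)\,in_{\mu'}(s)=in_{\mu'}(rs)=in_{\mu'}1$, which closes the induction.

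The step I expect to require the most care is precisely the value comparison in $\phi=qf+r$: one has to combine the defining inequality $\gamma>\mu(\phi)$ with the fact that $q$ and $r$ have $x$-degree $<d$ (so that $\mu$ and $\mu'$ agree on them and $\mu'(qf)=\mu(qf)$) in order to conclude that the leading forms of $qf$ and of $r$ cancel in $G_{\mu'}$. Once this is in place, the remaining ingredients---additivity of a key polynomial in Step 1, multiplicativity of $\mu$ and $\mu'$, and the compatibility of initial forms with products and sums---are routine bookkeeping.
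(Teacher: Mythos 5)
Your proof is correct. Note that the paper itself offers no argument for this statement: it is Vaqui\'e's Lemme 1.4, recalled verbatim from \cite{V}, so there is no in-paper proof to compare against; your write-up is in the spirit of the classical MacLane--Vaqui\'e argument (part (2) in particular amounts to showing that initial forms of polynomials of degree less than $\deg\phi$ are invertible in $G_{\mu'}$, which you obtain by induction on degree via the Euclidean division $\phi=qf+r$). The only ingredient you import without proof is the equality $\mu(f)=\min_j\{\mu(f_j)+j\mu(\phi)\}$ for the $\phi$-expansion; this is indeed a standard consequence of $\mu$-minimality alone, and for completeness it has a two-line proof: if $\mu(f)>\min_j\mu(f_j\phi^j)=\beta$, the initial forms of the terms of value $\beta$ cancel in $P_\beta/P_{\beta+}$, i.e. $\sum_{j\in S}in_\mu(f_j)\,(in_\mu\phi)^j=0$; dividing by $(in_\mu\phi)^{j_0}$ with $j_0=\min S$ in the domain $G_\mu$ exhibits $in_\mu(f_{j_0})$ as $in_\mu\phi$ times a nonzero homogeneous element (hence an initial form), so $\phi$ $\mu$-divides $f_{j_0}$ with $\deg f_{j_0}<\deg\phi$, contradicting $\mu$-minimality. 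With that inserted, your two steps check out completely: the strict inequality $\gamma>\mu(\phi)$ forces the $\mu'$-minimum onto the $j=0$ term in (1), and in (2) the comparison $\mu'(\phi)=\gamma>\mu(\phi)\ge\min\{\mu'(qf),\mu'(r)\}$ correctly yields $in_{\mu'}(qf)=-in_{\mu'}(r)$, while $r\neq0$ is rightly excluded using $\mu$-irreducibility plus $\mu$-minimality rather than irreducibility of $\phi$ in $K[x]$.
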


We recall the statement of Proposition 1.1 and Proposition 1.2 in \cite{V0}.
\begin{prop}\label{sameaugmentedvaluation}(\cite{V0}, page 397)
Let $\mu$ be a valuation of $K[x]$. Let $\phi_{1}$ and $\phi_{2}$ be two key polynomials for the valuation $\mu$, and let $\gamma_{1}>\mu(\phi_{1})$ and $\gamma_{2}>\mu(\phi_{2})$ be two values of a totaly ordered group containing the ordered group of $\mu$. Then the augmented valuations $\mu_{1}=[\mu;\mu_{1}(\phi_{1})=\gamma_{1}]$ and $\mu_{2}=[\mu;\mu_{2}(\phi_{2})=\gamma_{2}]$ defined by these polynomials and these values are equal if and only if $\gamma_{1}=\gamma_{2}$ and if the polynomials $\phi_{1}$ and $\phi_{2}$ have the same degree and satisfy
$\mu(\phi_{1}-\phi_{2})\geq \gamma_{1}=\gamma_{2}$.\\
In this case, the polynomials $\phi_{1}$ and $\phi_{2}$ are $\mu$-equivalent.
\end{prop}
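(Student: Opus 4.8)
The plan is to treat the two implications separately; both are elementary and rest on three facts that are immediate from Definition \ref{augmvaluation}: if $\mu'=[\mu;\mu'(\phi)=\gamma]$, then (a) $\mu'(\phi)=\gamma$; (b) $\mu'(f)=\mu(f)$ whenever $\deg_x f<\deg_x\phi$; and (c) $\mu'$ is a valuation, hence satisfies the triangle inequality and is additive on products. The remaining structural input is purely formal: since $\phi_1$ and $\phi_2$ are monic, if they have the same degree $d$ then $\deg_x(\phi_1-\phi_2)<d$, so $\phi_1=1\cdot\phi_2+(\phi_1-\phi_2)$ is already the $\phi_2$-adic expansion of $\phi_1$, and symmetrically for $\phi_2$ in terms of $\phi_1$. (One could alternatively route parts of this through Lemma \ref{lemme1.4}, but the direct argument is self-contained.)

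For the ``only if'' direction, assume $\mu_1=\mu_2=:\mu'$. First I would show $\deg_x\phi_1=\deg_x\phi_2$: if, say, $\deg_x\phi_1<\deg_x\phi_2$, then by (b) applied to $\mu_2$ we would have $\mu'(\phi_1)=\mu_2(\phi_1)=\mu(\phi_1)$, whereas by (a) applied to $\mu_1$ we have $\mu'(\phi_1)=\gamma_1>\mu(\phi_1)$, a contradiction; so $\deg_x\phi_1=\deg_x\phi_2=:d$. Using the $\phi_2$-adic expansion $\phi_1=\phi_2+(\phi_1-\phi_2)$ and the definition of $\mu_2$ gives $\gamma_1=\mu'(\phi_1)=\mu_2(\phi_1)=\min\{\gamma_2,\mu(\phi_1-\phi_2)\}$, hence $\gamma_1\le\gamma_2$; the symmetric computation with $\phi_1$ and $\phi_2$ interchanged gives $\gamma_2\le\gamma_1$, so $\gamma_1=\gamma_2=:\gamma$. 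Feeding this back into $\gamma=\min\{\gamma,\mu(\phi_1-\phi_2)\}$ forces $\mu(\phi_1-\phi_2)\ge\gamma$, which is exactly the required condition.

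For the ``if'' direction, assume $\gamma_1=\gamma_2=:\gamma$, $\deg_x\phi_1=\deg_x\phi_2=:d$, and $\mu(\phi_1-\phi_2)\ge\gamma$. The same expansion computation now shows $\mu_2(\phi_1)=\min\{\gamma,\mu(\phi_1-\phi_2)\}=\gamma$ and, symmetrically, $\mu_1(\phi_2)=\gamma$. Now take any $f\in K[x]$ and write its $\phi_1$-adic expansion $f=\sum_j a_j\phi_1^j$ with $\deg_x a_j<d$. Applying the valuation axioms (c), together with $\mu_2(a_j)=\mu(a_j)$ (from (b)) and $\mu_2(\phi_1)=\gamma$, gives $\mu_2(f)\ge\min_j\{\mu(a_j)+j\gamma\}=\mu_1(f)$; expanding $f$ $\phi_2$-adically and running the symmetric argument gives $\mu_1(f)\ge\mu_2(f)$, whence $\mu_1=\mu_2$. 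Finally, $\mu(\phi_1-\phi_2)\ge\gamma>\mu(\phi_2)$ (the last inequality being the hypothesis $\gamma=\gamma_2>\mu(\phi_2)$) together with $\phi_1=\phi_2+(\phi_1-\phi_2)$ yields $in_\mu\phi_1=in_\mu\phi_2$, i.e.\ $\phi_1$ and $\phi_2$ are $\mu$-equivalent.

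I do not expect a serious obstacle; the one point deserving care is the ``symmetry'' step in the ``if'' direction, namely the lower bound $\mu_2(f)\ge\min_j\{\mu(a_j)+j\gamma\}$ obtained by expanding $f$ in powers of $\phi_1$ even though $\phi_1$ is not the key polynomial defining $\mu_2$. This bound uses only subadditivity and multiplicativity of $\mu_2$ — no $\mu_2$-minimality of $\phi_1$ — so it is legitimate; it is the reverse inequality $\mu_2(f)\le\mu_1(f)$ that would fail for a ``wrong'' $\phi_1$, and here it is supplied by the symmetric expansion together with $\mu_1(\phi_2)=\gamma$, which is precisely where the hypothesis $\mu(\phi_1-\phi_2)\ge\gamma$ enters. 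Throughout, the standing assumption that both $\phi_i$ are monic of the same degree is what makes $\phi_1-\phi_2$ have degree $<d$ and thus keeps every expansion finite and under control.
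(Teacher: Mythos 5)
Your argument is correct. Note that the paper itself gives no proof of this statement: Proposition \ref{sameaugmentedvaluation} is simply quoted from Vaqui\'e \cite{V0} (Proposition 1.1 there), so there is nothing internal to compare against; your self-contained derivation is essentially the standard one. The three ingredients you isolate are exactly what is needed: fact (b) rules out unequal degrees, the $\phi_2$-adic expansion $\phi_1=\phi_2+(\phi_1-\phi_2)$ (legitimate because both polynomials are monic of the same degree) gives $\mu_2(\phi_1)=\min\{\gamma_2,\mu(\phi_1-\phi_2)\}$ and hence, by symmetry, $\gamma_1=\gamma_2$ and $\mu(\phi_1-\phi_2)\ge\gamma_1$ in the ``only if'' direction, while in the ``if'' direction the two-sided estimate $\mu_2(f)\ge\mu_1(f)\ge\mu_2(f)$, obtained from subadditivity and multiplicativity of each augmented valuation applied to the other's key polynomial expansion, is exactly right --- and, as you observe, it uses no minimality of $\phi_1$ with respect to $\mu_2$, only that $\mu_2$ is a valuation with $\mu_2(\phi_1)=\gamma$ and $\mu_2(a_j)=\mu(a_j)$ for the coefficients. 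The final deduction $in_\mu\phi_1=in_\mu\phi_2$ from $\mu(\phi_1-\phi_2)\ge\gamma>\mu(\phi_2)$ is also correct.
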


\begin{prop}\label{samelimitaugmentedvaluation}(\cite{V0}, page 398)
Let $\left\{ \mu_\alpha \right\}_{\alpha\in A}$ be a continued admissible family of valuations of $K[x]$ and let $\psi$ and $\psi'$ be two limit key polynomials for the family $\left\{ \mu_\alpha \right\}_{\alpha\in A}$, then the polynomials $\psi$ and $\psi'$ are $\mu_{\alpha}$-equivalent for all $\alpha$ sufficiently large. Furthermore  the limit augmented valuations $\mu_{1}=[(\mu_{\alpha})_{\alpha \in A};\mu_{1}(\psi)=\gamma]$ and $\mu_{1}'=[(\mu_{\alpha})_{\alpha \in A};\mu_{1}'(\psi')=\gamma']$ defined, respectively, by $\psi$ and $\psi'$ and the values $\gamma$ and $\gamma'$ are equal if and only if $\gamma=\gamma'$ and if the polynomials $\psi$ and $\psi'$ satisfy $\mu_{A}(\psi-\psi')\geq \gamma >\mu_{\alpha}(\psi)=\mu_{\alpha}(\psi')$.\\
\end{prop}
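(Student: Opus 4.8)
The plan is to imitate the structure of the proof of Proposition~\ref{sameaugmentedvaluation}, with ordinary augmented valuations replaced by limit augmented valuations: first pin down exactly how closely two limit key polynomials for the family $\{\mu_\alpha\}_{\alpha\in A}$ must agree (this yields the first assertion), and then read off the comparison of $\mu_1$ and $\mu_1'$ directly from the formulas that define limit augmented valuations. I will freely use the structural facts about limit key polynomials from \cite{V}: all limit key polynomials for $\{\mu_\alpha\}_{\alpha\in A}$ have one and the same degree $d'=\deg\psi=\deg\psi'$, and each of them has strictly increasing values, i.e.\ $\mu_\alpha(\psi)<\mu_\beta(\psi)$ and $\mu_\alpha(\psi')<\mu_\beta(\psi')$ whenever $\alpha<\beta$ in $A$ (compare Remark~\ref{minimaldegreelimit}).

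For the first assertion, note that since $\psi,\psi'$ are monic of degree $d'$, the polynomial $\psi-\psi'$ has degree $<d'$, so $\mu_A(\psi-\psi')$ is defined; write $\delta$ for this value, so that $\mu_\alpha(\psi-\psi')=\delta$ for all $\alpha$ beyond some index $\alpha_1$. The key trivial observation is that the pointwise minimum of two strictly increasing nets in an ordered group is again strictly increasing. Hence, if there were two indices $\alpha_1\le\alpha<\beta$ with $\mu_\alpha(\psi)\ne\mu_\alpha(\psi')$ and $\mu_\beta(\psi)\ne\mu_\beta(\psi')$, we would get $\delta=\mu_\alpha(\psi-\psi')=\min\{\mu_\alpha(\psi),\mu_\alpha(\psi')\}<\min\{\mu_\beta(\psi),\mu_\beta(\psi')\}=\mu_\beta(\psi-\psi')=\delta$, a contradiction; as $A$ has no last element this forces $\mu_\alpha(\psi)=\mu_\alpha(\psi')=:\delta_\alpha$ for all $\alpha$ large enough. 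Finally $\delta_\alpha>\delta$ is impossible, since then the two terms in $(\psi-\psi')+\psi'=\psi$ would have distinct values $\delta$ and $\delta_\alpha$, giving $\delta_\alpha=\mu_\alpha(\psi)=\min\{\delta,\delta_\alpha\}=\delta$; so $\delta_\alpha\le\delta$, and $\delta_\alpha=\delta$ is then excluded by strict monotonicity. Therefore $\mu_A(\psi-\psi')=\delta>\delta_\alpha=\mu_\alpha(\psi)=\mu_\alpha(\psi')$ for $\alpha$ large, which is precisely the statement $in_{\mu_\alpha}\psi=in_{\mu_\alpha}\psi'$. This proves the first assertion and, as a bonus, the last of the three relations in the displayed inequality of the proposition.

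For the equivalence I would compute with the defining formula $\mu_1(f)=\inf_j\{\mu_A(f_j)+j\gamma\}$, where $f=\sum_j f_j\psi^j$ with $\deg f_j<d'$, and the analogous formula for $\mu_1'$. Two remarks: for $g$ with $\deg g<d'$ one has $\mu_1(g)=\mu_1'(g)=\mu_A(g)$; and, substituting the $\psi'$-adic expansion $\psi=1\cdot\psi'+(\psi-\psi')$ into the formula for $\mu_1'$, one obtains $\mu_1'(\psi)=\min\{\gamma',\mu_A(\psi-\psi')\}$, and symmetrically $\mu_1(\psi')=\min\{\gamma,\mu_A(\psi-\psi')\}$. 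For the implication ``$\Leftarrow$'': if $\gamma=\gamma'$ and $\mu_A(\psi-\psi')\ge\gamma$, then $\mu_1'(\psi)=\gamma=\gamma'$, so for any $f=\sum_j f_j\psi^j$ the valuation property of $\mu_1'$ gives $\mu_1'(f)\ge\min_j\{\mu_1'(f_j)+j\mu_1'(\psi)\}=\min_j\{\mu_A(f_j)+j\gamma\}=\mu_1(f)$, and the symmetric computation (expanding $f$ in powers of $\psi'$) gives $\mu_1(f)\ge\mu_1'(f)$; hence $\mu_1=\mu_1'$. For ``$\Rightarrow$'': if $\mu_1=\mu_1'$, evaluating both sides at $\psi$ and at $\psi'$ and using the two remarks gives $\gamma=\min\{\gamma',\mu_A(\psi-\psi')\}$ and $\gamma'=\min\{\gamma,\mu_A(\psi-\psi')\}$, which force $\gamma=\gamma'$ and then $\gamma\le\mu_A(\psi-\psi')$; combining with $\gamma>\mu_\alpha(\psi)$ (valid for all $\alpha$ by the very definition of a limit augmented valuation) and the identity $\mu_\alpha(\psi)=\mu_\alpha(\psi')$ from the first part yields the full chain $\mu_A(\psi-\psi')\ge\gamma>\mu_\alpha(\psi)=\mu_\alpha(\psi')$.

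The only delicate step is the first one: it rests on the structural facts about limit key polynomials (common degree, strictly increasing $\mu_\alpha$-values) quoted from \cite{V}, and then on handling the arithmetic in the value group with care — distinguishing ``for cofinally many $\alpha$'' from ``for all large $\alpha$'' and ruling out $\delta_\alpha=\delta$ and $\delta_\alpha>\delta$. Once that is in place, the equivalence is a routine manipulation of the defining formula of the limit augmented valuation, formally parallel to the argument behind Proposition~\ref{sameaugmentedvaluation}.
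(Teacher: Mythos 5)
The paper does not actually prove this statement: it is recalled verbatim from Vaqui\'e \cite{V0} (Proposition 1.2, page 398) and used as a black box, so there is no proof in the paper to compare yours against. Judged on its own terms, the second half of your argument is correct and routine: computing $\mu_1'(\psi)=\min\{\gamma',\mu_A(\psi-\psi')\}$ and $\mu_1(\psi')=\min\{\gamma,\mu_A(\psi-\psi')\}$ from the defining formula of a limit augmented valuation, getting the two inequalities $\mu_1'(f)\ge\mu_1(f)$ and $\mu_1(f)\ge\mu_1'(f)$ by expanding $f$ alternately in powers of $\psi$ and of $\psi'$, and in the converse direction reading off $\gamma=\gamma'$ and $\gamma\le\mu_A(\psi-\psi')$ by evaluating at $\psi$ and $\psi'$; the inequality $\gamma>\mu_\alpha(\psi)$ indeed comes for free from the definition of the limit augmented valuation, and the equality $\mu_\alpha(\psi)=\mu_\alpha(\psi')$ from the first assertion.

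The weak point is precisely that first assertion. Everything there rests on the two facts you import from \cite{V}: that all limit key polynomials for the family have one and the same degree $d'$, and that each has strictly increasing values $\mu_\alpha(\psi)<\mu_\beta(\psi)$ for $\alpha<\beta$. Neither is available in the present paper: Remark \ref{minimaldegreelimit} states only the converse implication (monic, of minimal degree among polynomials with strictly increasing values $\Rightarrow$ limit key polynomial), not that an arbitrary monic, $A$-minimal, $A$-irreducible polynomial has non-stabilizing values and degree equal to $\deg\psi'$. These two facts carry essentially all of the content of the first assertion: without them $\mu_A(\psi-\psi')$ need not even be defined, and the expansion $\psi=\psi'\cdot 1+(\psi-\psi')$ you use to compute $\mu_1'(\psi)$ requires $\deg(\psi-\psi')<\deg\psi'$. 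Granting them, your argument (the minimum of two strictly increasing nets is strictly increasing, hence $\mu_\alpha(\psi)=\mu_\alpha(\psi')$ for all large $\alpha$, and the exclusion of $\delta_\alpha\ge\delta$) is correct and gives the eventual $\mu_\alpha$-equivalence. So your proposal is sound as a reduction to those structural results of Vaqui\'e, but it is not a self-contained proof of the quoted proposition: the hard part of the first assertion has been relocated into the cited facts rather than proved, and if you want a genuinely independent argument you would need to establish (or at least locate precisely in \cite{V0}/\cite{V}) that limit key polynomials have a common degree and non-stabilizing values.
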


\section{HOS key polynomials: definitions and some basic results.}\label{HOSkey}

The paper \cite{HOS} define a well ordered set $\mathbf Q=\{Q_i\}_{i\in\Lambda}$
 of  key polynomials of a valuation $\mu$ recursively in $i$.
 As the definition of HOS key polynomials is long,
 we can not repeat it all, we refer the reader to the paper \cite{HOS} for a detailed definition. But, in this section we will summarize the main aspects of the definition of HOS key polynomials, and also recall some basic definitions and results from \cite{HOS}.\\

As in the previous section, let $(K,\nu)$ be a valued field, $\Gamma$ the value group of $\nu$, $x$ an independent variable, and let $\mu$ be a valuation of $K[x]$, extending $\nu$, with values in an ordered abelian group $\Gamma'$. For an element $\beta\in\Gamma'$, let $P'_\beta=\{y\in K[x]\ |\ \mu(y)\ge\beta\}$. Let $\Gamma'_{1}$ denote the smallest non-zero isolated subgroup of $\Gamma'$. Assume that $rk\ \nu=1$.\\

\noindent\textbf{Notation.} For an element $l\in\Lambda$, we will denote by $l+1$
the immediate successor of $l$ in $\Lambda$. The immediate predecessor
of $l$, when it exists, will be denoted by $l-1$. For a positive
integer $t$, $l+t$ will denote the immediate successor of
$l+(t-1)$.
\medskip

 We take this opportunity to correct a misprint in the definition of a complete set of HOS key polynomials in \cite{HOS}. The correct definition is:
\begin{deft}\label{complete}(\cite{HOS}, page 1038) A {\bf complete set of HOS key polynomials} for $\mu$ is a well
ordered collection
$$
\mathbf Q=\{Q_i\}_{i\in\Lambda}
$$
of elements of $K[x]$ such that for each $\beta\in\Gamma'$ the additive group $P'_\beta$ is generated
by products of the form\ \ $a\cdot\prod\limits_{j=1}^sQ_{i_j}^{\gamma_j}$, $a\in K$, such that
$\sum\limits_{j=1}^s\gamma_j\mu(Q_{i_j})+\nu(a)\ge\beta$.
\end{deft}
Note, in particular, that if $\mathbf Q$ is a complete set of HOS key polynomials
then their images $\init_\mu Q_i\in G_\mu$ generate $G_\mu$ as a $G_\nu$-algebra.
\begin{rek} The results of \cite{HOS} are stated and proved for simple \textbf{algebraic} extensions
$$
K\hookrightarrow K(x)
$$
of valued fields, but those cited in the present paper are equally valid (with the same proofs) for simple pure transcendental extensions.
\end{rek}
We look for complete systems of HOS key polynomials such that the order type of the well ordered set $\Lambda$ is the smallest possible (it is shown in \cite{HOS} that this order type is at most $\mathbb N$ is $char\ \frac{R_\nu}{m_\nu}=0$ and at most $\omega\times\omega$ if $char\ \frac{R_\nu}{m_\nu}>0$).
\medskip

\noindent\textbf{Notation.} For $l\in\Lambda$, $\mathbf Q_l$ will stand for $\{Q_i\}_{i<l}$ and $\beta_l$ for $\mu(Q_l)$.
\medskip

The paper \cite{HOS} constructs, recursively in $i$, HOS key polynomials $\{Q_i\}_{i\in\Lambda}$ and strictly positive integers $\{\alpha_{i}\}_{i\in\Lambda}$ such that for each $l\in\Lambda$ all but finitely many of the $\alpha_{i}$ with $i\le l$ are equal to 1. We will describe below the main aspects of this construction.

Take a polynomial $h=\sum\limits_{i=0}^sd_ix^i\in K[x]$, $d_i\in K$.
\begin{deft}(\cite{HOS}, page 1039) The first {\bf Newton polygon} of $h$ with respect to $\nu$ is
the convex hull $\Delta_1(h)$ of the set
$\bigcup\limits_{i=0}^s\left(\left(\nu(d_i),i\right)+
\left(\Gamma_+\oplus\Bbb Q_+\right)\right)$ in $\Gamma\oplus\Bbb Q$.
\end{deft}
To an element $\beta_1\in\Gamma'_+$, we associate the following valuation
$\nu_1$ of $K[x]$: for a polynomial $h=\sum\limits_{i=0}^sd_ix^i$, we put
$$
\nu_1(h)=\min\left\{\left.\nu(d_i)+i\beta_1\ \right|\ 0\le i\le s\right\}.
$$
Consider an element $\beta_1\in\Gamma'_+$.
\begin{deft}(\cite{HOS}, page 1039) We say that $\beta_1$ {\bf determines a side} of $\Delta_1(h)$
if the following condition holds. Let
$$
S_1(h,\beta_1)=\left\{\left.i\in\{0,\dots,s\}\ \right|\
i\beta_1+\nu(d_i)=\nu_1(h)\right\}.
$$
We require that $\#S_1(h,\beta_1)\ge2$.
\end{deft}
Let $\beta_1=\mu(x)$. Then for any $h\in K[x]$ we have
\begin{equation}\label{nu1mu}
\nu_1(h)\le\mu(h)
\end{equation}
by the axioms for valuations. If equality holds in (\ref{nu1mu}) for all $h\in K[x]$, we put $\Lambda=\{1\}$, $x=Q_1$ and stop. The definition of key polynomials is complete. From now on, assume that there exists a polynomial $h\in K[x]$ such that $\nu_1(h)<\mu(h)$.
\begin{prop}(\cite{HOS}, page 1039)
Take a polynomial $h=\sum\limits_{i=0}^sd_ix^i\in
K[x]$ such that
$$
\nu_1(h)<\mu(h).
$$
Then
$$
\sum\limits_{i\in S(h,\beta_1)}\init_\nu d_i \init_{\mu}x^i=0.
$$
\end{prop}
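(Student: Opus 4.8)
The plan is to compare the valuation $\mu$ with the auxiliary valuation $\nu_1$ by passing to the graded rings and exploiting the fact that, by hypothesis, $\nu_1(h)<\mu(h)$. Write $\gamma_0=\nu_1(h)$ for brevity. First I would observe that for every index $i$ we have $\nu(d_i)+i\beta_1=\nu_1(d_ix^i)\le\mu(d_ix^i)=\nu(d_i)+i\mu(x)=\nu(d_i)+i\beta_1$ since $\beta_1=\mu(x)$; so in fact $\nu_1(d_ix^i)=\mu(d_ix^i)$ for every $i$, and consequently $\mu(d_ix^i)=\gamma_0$ precisely when $i\in S(h,\beta_1)$, while $\mu(d_ix^i)>\gamma_0$ for $i\notin S(h,\beta_1)$. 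Thus, splitting the sum $h=\sum_{i\in S(h,\beta_1)}d_ix^i+\sum_{i\notin S(h,\beta_1)}d_ix^i$, the second sum has $\mu$-value $>\gamma_0$, whereas the first sum, call it $h_0$, satisfies $\mu(h_0)\ge\gamma_0$.

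Next I would use the assumption $\mu(h)>\gamma_0$: since $h=h_0+(h-h_0)$ and $\mu(h-h_0)>\gamma_0$, we get $\mu(h_0)=\mu(h)>\gamma_0$ as well (an equality of $\mu$-values would force $\mu(h_0)=\gamma_0$, a contradiction). Now I pass to the graded ring $G_\mu$. The homogeneous piece of degree $\gamma_0$ of the element $h_0\in K[x]$ vanishes in $G_\mu$ precisely because $\mu(h_0)>\gamma_0$; on the other hand that homogeneous piece is exactly $\sum_{i\in S(h,\beta_1)}\init_\mu(d_ix^i)$, since each term $d_ix^i$ with $i\in S(h,\beta_1)$ has $\mu$-value exactly $\gamma_0$ and terms of value $>\gamma_0$ contribute $0$ to that graded component. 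Finally, since $\nu(d_i)=\nu_1(d_i)$ trivially (both equal $\nu(d_i)$) and $\mu$ restricts to $\nu$ on $K$, we have $\init_\mu(d_ix^i)=\init_\nu(d_i)\cdot\init_\mu(x)^i=\init_\nu d_i\,\init_\mu x^i$ inside $G_\mu$, using multiplicativity of the leading-form map on products of elements whose values add up without cancellation. Combining, $\sum_{i\in S(h,\beta_1)}\init_\nu d_i\,\init_\mu x^i=0$ in $G_\mu$, which is the assertion.

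The only genuinely delicate point is the bookkeeping of which homogeneous component of $G_\mu$ we are looking at and the justification that $\init_\mu$ is additive on the sum $h_0=\sum_{i\in S(h,\beta_1)}d_ix^i$ in the sense that $\init_\mu(h_0)$ equals the sum of those $\init_\mu(d_ix^i)$ that have minimal value. This is immediate from the definition of $G_\mu$: the natural map $P'_{\gamma_0}\to P'_{\gamma_0}/P'_{\gamma_0+}$ is additive, it kills every term of $\mu$-value $>\gamma_0$, and it sends each remaining term to its leading form; so $\init_\mu(h_0)$ (if $\mu(h_0)=\gamma_0$) or $0$ (if $\mu(h_0)>\gamma_0$) equals $\sum_{i\in S(h,\beta_1)}\init_\mu(d_ix^i)$ viewed in the degree-$\gamma_0$ component. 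Since we have already shown $\mu(h_0)>\gamma_0$, this sum is $0$. I would present the argument in essentially this order: (1) value-matching $\nu_1(d_ix^i)=\mu(d_ix^i)$; (2) identify $S(h,\beta_1)$ as the set of indices realizing the minimum $\mu(d_ix^i)=\gamma_0$; (3) deduce $\mu(h_0)>\gamma_0$ from $\mu(h)>\gamma_0$; (4) read off the vanishing of the degree-$\gamma_0$ graded component and rewrite $\init_\mu(d_ix^i)=\init_\nu d_i\,\init_\mu x^i$. No step requires more than the axioms for valuations and the definition of the associated graded ring recalled at the start of Section~\ref{vaquie}.
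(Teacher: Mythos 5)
Your proof is correct and follows essentially the same graded-ring argument as the source \cite{HOS} (the paper only quotes this proposition without reproving it): identify $S(h,\beta_1)$ with the indices where $\mu(d_ix^i)$ attains the minimum $\gamma_0=\nu_1(h)$, observe that $\mu$ of the sum of those terms exceeds $\gamma_0$, and read off the vanishing of the degree-$\gamma_0$ component in $G_\mu$, using $\init_\mu(d_ix^i)=\init_\nu d_i\,(\init_\mu x)^i$. One small remark: your intermediate claim $\mu(h_0)=\mu(h)$ need not hold when $\mu(h-h_0)<\mu(h)$; all you need, and all your ultrametric estimate actually gives, is $\mu(h_0)\ge\min\{\mu(h),\mu(h-h_0)\}>\gamma_0$, so the argument is unaffected.
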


\begin{cor}(\cite{HOS}, page 1040) Take a polynomial $h\in K[x]$ such that $\nu_1(h)<\mu(h)$. Then
$\beta_1$ determines a side of $\Delta_1(h)$.
\end{cor}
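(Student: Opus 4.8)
The plan is to deduce the corollary directly from the preceding Proposition by a short counting argument in the graded ring $G_\mu$.

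First I would record that $S_1(h,\beta_1)\neq\emptyset$ and that $d_i\neq0$ for every $i\in S_1(h,\beta_1)$: since $\nu_1(h)<\mu(h)\le+\infty$ we have $h\neq0$, so at least one coefficient $d_i$ is nonzero, and $\nu_1(h)$ is the minimum of the \emph{finite} set $\{\nu(d_i)+i\beta_1\mid 0\le i\le s\}$, hence attained; moreover for $i\in S_1(h,\beta_1)$ the value $\nu(d_i)=\nu_1(h)-i\beta_1$ lies in $\Gamma$, so $d_i\neq0$.

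Next, for each $i\in S_1(h,\beta_1)$ one has $\mu(d_ix^i)=\nu(d_i)+i\mu(x)=\nu(d_i)+i\beta_1=\nu_1(h)$, so that $\init_\mu(d_ix^i)=\init_\nu(d_i)\,\init_\mu x^i$ is a homogeneous element of $G_\mu$ of degree $\nu_1(h)$, the same for all $i\in S_1(h,\beta_1)$; and it is nonzero because $d_ix^i\neq0$ (equivalently, because $G_\mu$ is an integral domain and each factor is nonzero). Now I would invoke the Proposition, which under the hypothesis $\nu_1(h)<\mu(h)$ yields $\sum_{i\in S_1(h,\beta_1)}\init_\nu(d_i)\,\init_\mu x^i=0$ in $G_\mu$. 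If $\#S_1(h,\beta_1)=1$, say $S_1(h,\beta_1)=\{i_0\}$, this identity reads $\init_\nu(d_{i_0})\,\init_\mu x^{i_0}=0$, contradicting the previous sentence. Hence $\#S_1(h,\beta_1)\ge2$, which is precisely the condition that $\beta_1$ determines a side of $\Delta_1(h)$.

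The argument is essentially immediate once the Proposition is granted; the only point deserving a word of justification is that a one-term sum $\init_\nu(d_{i_0})\,\init_\mu x^{i_0}$ cannot vanish, i.e.\ that there is no ``internal cancellation''. This rests on the fact that the initial form of a nonzero element is nonzero, equivalently that the associated graded ring is a domain. So I do not anticipate any real obstacle.
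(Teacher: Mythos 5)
Your argument is correct and is the intended deduction: the paper only quotes this corollary from \cite{HOS} without reproducing a proof, and the evident derivation is exactly yours, namely that the Proposition forces $\sum_{i\in S_1(h,\beta_1)}\init_\nu d_i\,\init_\mu x^i=0$, while each term equals $\init_\mu(d_ix^i)\neq0$ because the graded algebra of a valuation is a domain and initial forms of nonzero elements are nonzero, so $\#S_1(h,\beta_1)=1$ is impossible. Nothing further is needed.
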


\noindent\textbf{Notation.} Let $X$ be a new variable. Take a polynomial
$h$ as above. We denote
$$
\init_1h:=\sum\limits_{i\in S_1(h,\beta_1)} \init_\nu d_iX^i.
$$
The polynomial $\init_1h$ is quasi-homogeneous in $G_\nu[X]$, where the
weight assigned to $X$ is $\beta_1$. Let
\begin{equation}\label{inihfact}
\init_1h=v\prod\limits_{j=1}^tg_j^{\gamma_j}
\end{equation}
be the factorization of $\init_1h$ into irreducible factors in $G_\nu[X]$. Here $v\in
G_\nu$ and the $g_j$ are monic polynomials in $G_\nu[X]$ (to be precise, we first
factor $\init_1h$ over the field of fractions of $G_\nu$ and then observe that all
the factors are quasi-homogeneous and therefore lie in $G_\nu[X]$).
\begin{prop}(\cite{HOS}, page 1040)
\begin{enumerate}
\item The element $\init_{\mu}x$ is integral over $G_\nu$.

\item The minimal polynomial of $\init_{\mu}x$ over $G_\nu$ is one of the
    irreducible factors $g_j$ on the right hand side of (\ref{inihfact}).

\end{enumerate}

\end{prop}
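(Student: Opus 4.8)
The plan is to read part (1) directly off the preceding Proposition, which asserts that for a polynomial $h=\sum_{i=0}^sd_ix^i\in K[x]$ with $\nu_1(h)<\mu(h)$ one has $\sum_{i\in S_1(h,\beta_1)}\init_\nu d_i\,\init_\mu x^i=0$. First I would make explicit the natural inclusion of graded rings $G_\nu\hookrightarrow G_\mu$ (coming from $P_\beta\subset P'_\beta$ and the fact that $\mu$ extends $\nu$), under which $\init_\nu d_i$ is identified with $\init_\mu d_i$, and I would record that, $G_\mu$ being an integral domain, $\init_\mu(x^i)=(\init_\mu x)^i$ for all $i$. With these two remarks the relation above says exactly that the quasi-homogeneous polynomial $\init_1 h=\sum_{i\in S_1(h,\beta_1)}\init_\nu d_iX^i\in G_\nu[X]$ lies in the kernel of the $G_\nu$-algebra homomorphism $G_\nu[X]\to G_\mu$, $X\mapsto\init_\mu x$; that is, $\init_\mu x$ is a root of $\init_1 h$. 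Moreover $\init_1 h$ is non-constant: by the preceding Corollary $\beta_1$ determines a side of $\Delta_1(h)$, so $\#S_1(h,\beta_1)\ge2$, whence $S_1(h,\beta_1)$ contains some index $\ge1$.

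Next I would plug $X=\init_\mu x$ into the factorization $\init_1 h=v\prod_{j=1}^tg_j^{\gamma_j}$ of (\ref{inihfact}), where $v\in G_\nu\setminus\{0\}$ and the $g_j$ are monic in $G_\nu[X]$. This gives $v\prod_{j=1}^tg_j(\init_\mu x)^{\gamma_j}=0$ in $G_\mu$; since $v\ne 0$ and $G_\mu$ is a domain, $g_{j_0}(\init_\mu x)=0$ for some $j_0$. As $g_{j_0}$ is monic with coefficients in $G_\nu$, this establishes (1): $\init_\mu x$ is integral over $G_\nu$.

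For (2), interpret "minimal polynomial over $G_\nu$" as the (monic) minimal polynomial $P\in F[X]$ over $F:=\mathrm{Frac}(G_\nu)$, which by construction of (\ref{inihfact}) lies in $G_\nu[X]$. It exists by (1). Recall that the $g_j$ are, up to the unit $v$ and the multiplicities $\gamma_j$, precisely the monic irreducible factors of $\init_1 h$ in $F[X]$, since that is how the factorization (\ref{inihfact}) was obtained (factor over $F$, observe the factors are quasi-homogeneous, hence lie in $G_\nu[X]$). From $g_{j_0}(\init_\mu x)=0$ we get $P\mid g_{j_0}$ in $F[X]$, and since $g_{j_0}$ is irreducible and $P$ is non-constant and monic, $P=g_{j_0}$. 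Hence the minimal polynomial of $\init_\mu x$ is the irreducible factor $g_{j_0}$ of $\init_1 h$, as claimed. (As a byproduct, $P$ being intrinsic to the element $\init_\mu x$, the same factor occurs in the factorization $\init_1 h=v\prod g_j^{\gamma_j}$ for every admissible choice of $h$.)

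The argument is short, and there is no serious obstacle once the preceding Proposition is granted: the genuine content — the cancellation $\sum_{i\in S_1(h,\beta_1)}\init_\nu d_i\,\init_\mu x^i=0$ forced by $\nu_1(h)<\mu(h)$, which says the vanishing occurs exactly among the terms on the side of $\Delta_1(h)$ cut out by $\beta_1$ — is already supplied. What remains is the routine verification that $X\mapsto\init_\mu x$ is a well-defined $G_\nu$-algebra map $G_\nu[X]\to G_\mu$ (using that $G_\mu$ is a domain and that $\mu$ extends $\nu$), together with the standard fact that a monic irreducible polynomial annihilating an algebraic element is its minimal polynomial.
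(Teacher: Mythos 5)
Your proof is correct; since this proposition is only quoted from \cite{HOS} (the paper under review gives no proof of its own), the relevant comparison is with the argument in \cite{HOS}, and yours is essentially that standard one: substitute $\init_\mu x$ into $\init_1h$ via the graded map $G_\nu[X]\to G_\mu$, use the preceding proposition and the fact that $G_\mu$ is a domain to get $g_{j_0}(\init_\mu x)=0$ for some monic irreducible factor in (\ref{inihfact}), and conclude by the usual minimal-polynomial divisibility argument. No gaps worth noting.
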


Now let $g_1$ be the minimal polynomial of $\init_{\mu}x$ over
$G_\nu$. Let $\alpha_2=\deg_Xg_1$. Write
$g_1=\sum\limits_{i=0}^{\alpha_2}\bar b_iX^i$, where $\bar
b_{\alpha_2}=1$. For each $i$, $0\le i\le\alpha_2$, choose a representative
$b_i$ of $\bar b_i$ in $R_\nu$ (that is, an element of $R_\nu$ such that
$\init_\nu b_i=\bar b_i$; in particular, we take $b_{\alpha_2}=1$). Put
$Q_2=\sum\limits_{i=0}^{\alpha_2}b_ix^i$.
\begin{deft}(\cite{HOS}, page 1041) The elements $Q_1$ and $Q_2$ are called, respectively,
{\bf the first and second key polynomials} of $\mu$.
\end{deft}
Now, every element $y$ of $K[x]$ can be written uniquely as a finite sum of the
form
\begin{equation}\label{secondstandardexpansion}
y=\sum\limits_{\substack{0\le\gamma_1<\alpha_2 \\
0\le\gamma_2}}
b_{\gamma_1\gamma_2}Q_1^{\gamma_1}Q_2^{\gamma_2}
\end{equation}
where $b_{\gamma_1\gamma_2}\in K$ (this is proved by Euclidean division by the
monic polynomial $Q_2$). The expression (\ref{secondstandardexpansion})
is called \textbf{the second standard expansion of }$y$.\\

Now, take an ordinal number greater than or equal to 3 which has an immediate
predecessor; denote this ordinal by $l+1$. If $\nu(\mathbb{N})=0$, assume that $l\in\mathbb{N}_0$. Assume given a set $\bold Q_{l+1}$ of polynomials and positive integers ${\bold\alpha}_{l+1}=\{\alpha_i\}_{i\le l}$, such that $\mu(Q_{i})\in \Gamma'_1$ for $i\le l$ and all but finitely many of
the $\alpha_i$ are equal to 1. Furthermore, we assume that for each $i\le l$ the polynomial $Q_i$ has an explicit expression in terms of $\bold Q_i$, described below.

We will use the following multi-index notation:
$\bar\gamma_{l+1}=\{\gamma_i\}_{i\le l}$, where all but finitely
many $\gamma_i$ are equal to 0, $\bold
Q_{l+1}^{{\bar\gamma}_{l+1}}=\prod\limits_{i\le
l}Q_i^{\gamma_i}$. Let $\beta_i=\mu(Q_i)$.
\begin{deft}(\cite{HOS}, page 1041) An index $i<l$ is said to be $l$-{\bf essential} if there
exists a positive integer $t$ such that either $i+t=l$ or $i+t<l$ and
$\alpha_{i+t}>1$; otherwise $i$ is called $l$-{\bf inessential}.
\end{deft}
In other words, $i$ is $l$-inessential if and only if $i+\omega\le l$ and
$\alpha_{i+t}=1$ for all $t\in\Bbb N_0$.

\noi{\bf Notation.} For $i<l$, let

\begin{align*}
i+&=i+1\qquad\,\text{if }i\text{ is l-essential}\\
&=i+\omega\qquad\text{otherwise}.
\end{align*}

\begin{deft}(\cite{HOS}, page 1041)

A multiindex ${\bar\gamma}_{l+1}$ is said to be \textbf{standard with respect to}
${\bold\alpha}_{l+1}$ if

\begin{equation}\label{gammaalpha}
0\le\gamma_i<\alpha_{i+}\text{ for }i\le l,
\end{equation}
and if $i$ is $l$-inessential then the set $\{j<i+\ |\ j+=i+\text{ and
}\gamma_j\ne0\}$ has cardinality at most one. An $l$\textbf{-standard monomial in}
$\bold Q_{l+1}$ (resp. an $l${\bf-standard monomial in} $\init_{\mu}\bold Q_{l+1}$)
is a product of the form $c_{{\bar\gamma}_{l+1}}\bold Q_{l+1}^{{\bar\gamma}_{l+1}}$,
(resp. $c_{{\bar\gamma}_{l+1}}\init_{\mu}\bold Q_{l+1}^{{\bar\gamma}_{l+1}}$) where
$c_{{\bar\gamma}_{l+1}}\in K$ (resp. $c_{{\bar\gamma}_{l+1}}\in G_\nu$) and the
multiindex ${\bar\gamma}_{l+1}$ is standard with respect to ${\bold\alpha}_{l+1}$.
\end{deft}

\begin{rek}(\cite{HOS}, page 1042) In the case when $i$ is $l$-essential, the
condition (\ref{gammaalpha}) amounts to saying that $0\le\gamma_i<\alpha_{i+1}$.
\end{rek}

\begin{deft}\label{standardexpansionnotinvolving}(\cite{HOS}, page 1042) An $l${\bf-standard expansion not involving }$Q_l$ is a finite
sum $S$ of $l$-standard monomials, not involving $Q_l$, having the following property.
Write $S=\sum\limits_\beta S_\beta$, where $\beta$ ranges over a certain finite
subset of $\Gamma'_+$ and
\begin{equation}\label{Sbeta}
S_\beta=\sum\limits_jd_{\beta j}
\end{equation}
is a sum of standard monomials $d_{\beta j}$ of value $\beta$. We require that
\begin{equation}\label{inimudbetaj}
\sum\limits_j\init_{\mu}d_{\beta j}\ne0
\end{equation}
for each $\beta$ appearing in (\ref{Sbeta}).
\end{deft}
\begin{prop}(\cite{HOS}, page 1042) Let $i$ be an ordinal and $t$ a positive integer. Assume that $i+t+1\le l$, so that the key
polynomials $\bold Q_{i+t+1}$ are defined, and that $\alpha_i=\dots=\alpha_{i+t}=1$. Then any
$(i+t)$-standard expansion does not involve any $Q_q$ with $i\le q<i+t$. In
particular, an $i$-standard expansion not involving $Q_i$ is the same thing as an
$(i+t)$-standard expansion, not involving $Q_{i+t}$.
\end{prop}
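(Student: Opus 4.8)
The plan is to argue entirely from the combinatorial definition of a standard multiindex, making no use of the valuation beyond the integers $\alpha_q$; the only facts needed about $\mu$ are packaged in condition (\ref{inimudbetaj}), which is insensitive to the ambient index range.

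First I would prove the first assertion. Fix $q$ with $i\le q<i+t$. Since $i+t$ is obtained from $i$ by $t$ successive passages to the immediate successor, and $i\le q<i+t$, we may write $q=i+s'$ for a unique integer $s'$ with $0\le s'\le t-1$; then $q+(t-s')=i+t$ with $t-s'$ a positive integer, so $q$ is $(i+t)$-essential. Hence in the $(i+t)$-context $q+=q+1$, and $q+1=i+(s'+1)$ lies in $\{i+1,\dots,i+t\}$, so $\alpha_{q+1}=1$ by hypothesis. The defining inequality (\ref{gammaalpha}) for a standard multiindex then forces $0\le\gamma_q<\alpha_{q+1}=1$, i.e.\ $\gamma_q=0$. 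Thus no $(i+t)$-standard monomial, and a fortiori no $(i+t)$-standard expansion, involves $Q_q$.

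Next I would show that for multiindices supported on $\{j\ |\ j<i\}$ the notions ``standard with respect to $\{\alpha_j\}_{j\le i}$'' and ``standard with respect to $\{\alpha_j\}_{j\le i+t}$'' coincide. The crux is the equality, for $j<i$, of the predicates ``$j$ is $i$-essential'' and ``$j$ is $(i+t)$-essential''. For $\Rightarrow$: if $j+s=i$ then $j+(s+t)=i+t$; if $j+s<i$ with $\alpha_{j+s}>1$ then also $j+s<i+t$ with $\alpha_{j+s}>1$. For $\Leftarrow$: if $j+s=i+t$ then, since $j<i$, necessarily $s>t$ and $j+(s-t)=i$; if $j+s<i+t$ with $\alpha_{j+s}>1$, then the case $i\le j+s<i+t$ is impossible (it would place $j+s$ in $\{i,\dots,i+t-1\}$, forcing $\alpha_{j+s}=1$), so $j+s<i$ and $j$ is $i$-essential. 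Consequently $j+$ denotes the same element of $\Lambda$ in both contexts, so (\ref{gammaalpha}) reads identically for $j<i$; and for $j$ inessential the auxiliary cardinality condition on $\{j'<j+\ |\ j'+=j+,\ \gamma_{j'}\ne0\}$ also reads identically, because every such $j'$ satisfies $j'<j+=j+\omega\le i$, hence $j'<i$, and so has the same ``plus'' in both contexts.

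Finally I would combine the two parts: an $(i+t)$-standard expansion not involving $Q_{i+t}$ has $\gamma_q=0$ for all $i\le q\le i+t$ (the range $i\le q<i+t$ by the first part, and $q=i+t$ because $Q_{i+t}$ is excluded), hence is a sum of $(i+t)$-standard monomials supported on $\{j\ |\ j<i\}$; by the equivalence just established these are precisely the $i$-standard monomials not involving $Q_i$; and since condition (\ref{inimudbetaj}) is a property of the monomials and of $\mu$ alone, independent of the ambient index range, the two notions of standard expansion coincide. The only delicate point in the argument is the equality of the essential/inessential dichotomy for indices $j<i$ across the two contexts — in particular the exclusion of the case $i\le j+s<i+t$ in the converse direction, which is exactly where the hypothesis $\alpha_i=\dots=\alpha_{i+t}=1$ is used; everything else is a routine unwinding of the definitions.
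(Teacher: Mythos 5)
Your proof is correct. Note that the paper itself gives no proof of this proposition---it is recalled verbatim from \cite{HOS} (page 1042)---so there is no internal argument to compare with; your unwinding of the definitions (each $q$ with $i\le q<i+t$ is $(i+t)$-essential because some iterated successor of $q$ equals $i+t$, so (\ref{gammaalpha}) forces $0\le\gamma_q<\alpha_{q+1}=1$; and for $j<i$ the predicates ``$i$-essential'' and ``$(i+t)$-essential'' coincide, the hypothesis $\alpha_i=\dots=\alpha_{i+t}=1$ being exactly what rules out a witness $j+s$ with $i\le j+s<i+t$, while the cardinality condition only concerns indices $j'<j+\omega\le i$ and hence is unchanged) is precisely the intended argument, and condition (\ref{inimudbetaj}) is indeed indifferent to the ambient index range. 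The one step you leave implicit is that any element of $\Lambda$ lying between $j$ and an iterated successor $j+s$ must itself be of the form $j+k$ with $0<k<s$ (a finite chain of immediate successors admits no interleaved elements), which is what justifies placing $j+s$ in $\{i,\dots,i+t-1\}$ and writing $j+(s-t)=i$; this is immediate from the notational convention for $l+t$, but deserves a word.
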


We will frequently use this fact in the sequel without mentioning it explicitly.
\begin{deft}(\cite{HOS}, page 1042)
For an element $g\in K[x]$, an expression of the form
$g=\sum\limits_{j=0}^sc_jQ_l^j$, where each $c_j$ is an $l$-standard expansion not
involving $Q_l$, will be called an $l$-{\bf standard expansion of }$g$.
\end{deft}

\begin{deft}(\cite{HOS}, page 1042) Let $\sum\limits_{{\bar\gamma}}\bar
c_{{\bar\gamma}}\init_{\mu}\bold Q_{l+1}^{{\bar\gamma}}$ be an $l$-standard
expansion, where $\bar c_{{\bar\gamma}}\in G_\nu$. A {\bf lifting} of
$\sum\limits_{{\bar\gamma}}\bar c_{{\bar\gamma}}\init_{\mu}\bold
Q_{l+1}^{{\bar\gamma}}$ to $K[x]$ is an $l$-standard expansion
$\sum\limits_{{\bar\gamma}}c_{{\bar\gamma}}\bold Q_{l+1}^{{\bar\gamma}}$, where
$c_{{\bar\gamma}}$ is a representative of $\bar c_{{\bar\gamma}}$ in $K$.
\end{deft}

\begin{deft}(\cite{HOS}, page 1042) Assume that $char\ k_\nu=p>0$. An $l$-standard expansion
  $\sum\limits_jc_jQ_l^j$, where each $c_j$ is an $l$-standard expansion not
  involving $Q_l$, is said to be {\bf weakly affine} if $c_j=0$ whenever
  $j>0$ and $j$ is not of the form $p^e$ for some $e\in\Bbb N_0$.
\end{deft}

Assume, inductively, that for each ordinal $i\le l$, every element $h$ of $K[x]$
admits an $i$-standard expansion. Furthermore, assume that for each $i\le l$, the
$i$-th polynomial $Q_i$ admits an $i_0$-standard expansion, with $i=i_0+$,
having the following additional properties:\\

If $i$ has an immediate predecessor $i-1$ in $\Lambda$ (such is always the
case if $char\ k_\nu=0$), the $(i-1)$-st standard expansion of $Q_i$ has the form
\begin{equation}\label{standardform}
Q_i=Q_{i-1}^{\alpha_i}+\sum\limits_{j=0}^{\alpha_i-1}
\left(\sum\limits_{{\bar\gamma}_{i-1}}
c_{ji{\bar\gamma}_{i-1}}\bold
Q_{i-1}^{{\bar\gamma}_{i-1}}\right)Q_{i-1}^j,
\end{equation}
where:

\begin{enumerate}
\item each $c_{ji{\bar\gamma}_{i-1}}\bold
Q_{i-1}^{{\bar\gamma}_{i-1}}$ is an $(i-1)$-standard monomial, not involving
$Q_{i-1}$
\item the quantity
$\nu\left(c_{ji{\bar\gamma}_{i-1}}\right)+j\beta_{i-1}+
\sum\limits_{q<i-1}\gamma_q\beta_q$ is constant for all the monomials
$$
\left(c_{ji{\bar\gamma}_{i-1}}\bold Q_{i-1}^{{\bar\gamma}_{i-1}}\right)Q_{i-1}^j
$$
appearing on the right hand side of (\ref{standardform})
\item the equation
\begin{equation}\label{standardformini}
\init_{\mu}Q_{i-1}^{\alpha_i}+\sum\limits_{j=0}^{\alpha_i-1}
\left(\sum\limits_{{\bar\gamma}_{i-1}}\init_\nu
c_{ji{\bar\gamma}_{i-1}}\init_{\mu}\bold
Q_{i-1}^{{\bar\gamma}_{i-1}}\right)
\init_{\mu}Q_{i-1}^j=0
\end{equation}
is the minimal algebraic relation satisfied by $\init_{\mu}Q_{i-1}$ over
$G_\nu[\init_{\mu}\bold Q_{i-1}]$.
\end{enumerate}

Finally, if $char\ k_\nu=p>0$ and $i$ does not have an immediate predecessor in
$\Lambda$ then there exist an $i$-inessential index $i_0$ and a strictly
positive integer $e_i$ such that $i=i_0+$ and
\begin{equation}\label{standardform2}
Q_i=c_{0i_0}+\sum\limits_{j=0}^{e_i}c_{p^ji_0}Q_{i_0}^{p^j}
\end{equation}
is a weakly affine monic $i_0$-standard expansion of degree $\alpha_i=p^{e_i}$ in
$Q_{i_0}$, where each $c_{qi_0}$ is an $i_0$-standard expansion not involving
$Q_{i_0}$. Moreover, there exists a positive element $\bar\beta_i\in\Gamma'$ such that

\begin{align*}
\bar\beta_i&>\beta_q\qquad\text{ for all }q<i,\\
\beta_i&\ge p^{e_i}\bar\beta_i\quad\text{ and}\\
p^j\bar\beta_i+\nu(c_{p^ji_0})&=p^{e_i}\bar\beta_i\quad\text{ for }0\le j\le
e_i.
\end{align*}

\begin{deft}
 The set $\bold Q_{l+1}$ is called an $l$-th set of \textbf{HOS Key Polynomial}. By \textbf{a set of HOS key polynomials} we will mean a set $\bold Q$ of polynomials for which there exists an ordinal $l$ such that $\bold Q$ is an $l$-th set of key polynomials. We will loosely refer to elements of this set as \textbf{HOS key polynomials}.
\end{deft}

If $i\in\Bbb N_0$, one can prove by induction that the $i$-standard expansion is
unique. If $char\ k_\nu>0$ and $h=\sum\limits_{j=0}^{s_i}d_{ji}Q_i^j$ is an $i$-standard expansion of $h$ (where
$h\in K[x]$), then the elements $d_{ji}\in K[x]$ are uniquely determined
by $h$ (strictly speaking, this does not mean that the $i$-standard expansion is
unique: for example, if $i$ is a limit ordinal, $d_{ji}$ admits an $i_0$-standard
expansion for each $i_0<i$ such that $i=i_0+$, but there may be countably many
choices of $i_0$ for which such an $i_0$-standard expansion is an $i_0$-standard
expansion, not involving $Q_{i_0}$ in the sense of Definition \ref{standardexpansionnotinvolving}).\\

\begin{deft}\label{nui}
For each ordinal $i\le l$ we define a valuation $\nu_i$ of $L$
as follows. Given an $i$-standard expansion $h=\sum\limits_{j=0}^{s_i}d_{ji}Q_i^j$, put
\begin{align}\label{eq:nui}
\nu_i(h)=\min\limits_{0\le j\le s_i}\{j\beta_i+\mu(d_{ji})\}.
\end{align}
The valuation $\nu_i$ will be called the $i$-{\bf truncation} of $\nu$.
\end{deft}
Note that even though in the case when $char\ k_\nu>0$ the standard
expansions of the elements $d_{ji}$ are not, in general, unique, the elements
$d_{ji}\in K[x]$ themselves are unique by Euclidean division, so $\nu_i$ is well
defined. That $\nu_i$ is, in fact, a valuation, rather than a
pseudo-valuation, follows from the definition of standard expansion,
particularly, from (\ref{inimudbetaj}). We always have
$$
\nu_i(h)\le\mu(h).
$$

The paper \cite{HOS} constructs, starting with a set $\bold Q_{l+1}$ of HOS key polynomials, a polynomial $Q_{l+1}$ such that $\bold Q_{l+2}$ forms and $(l+1)$-st set of key polynomials (this means, in other words, that $Q_{l+1}$ which has the form (\ref{standardform}) and satisfies properties 1--3). If $\alpha_{l+1}=1$ it may happen that the construction of \cite{HOS} produces a $(l+\omega+1)$-st set of HOS key polynomials, that is, an infinte
 sequence of polynomials $\left\{Q_{l+t}\right\}_{t\in\mathbb{N}}$ and a polynomial $Q_{l+\omega}$ (referred to as the limit HOS key polynomial) which has the form (\ref{standardform2}) and satisfies the properties listed right after equation (\ref{standardform2}).\\

 We will finish our construction here, for more details of the construction, we refer the reader
 to the paper \cite{HOS}. We also note that the paper \cite{HOS} section 8 page 1068 proves that we can always construct a \textbf{complete} family of HOS key polynomials.  \\

 We end this section giving more definitions and results of \cite{HOS}
 that we will use in the rest of our paper.

\medskip

\begin{prop}\label{prop21}(\cite{HOS}, page 1044)
\begin{enumerate}
	\item The polynomial $Q_{i}$ is monic in $x$; we have
	\[	\deg_{x}Q_{i}=\prod\limits_{j\leq i}\alpha_{j}.
\]
	\item Let $z$ be an $i$-standard expansion, not involving $Q_{i}$. Then
	\[
		\deg_{x}z<\deg_{x}Q_{i}.
\]
\end{enumerate}
\end{prop}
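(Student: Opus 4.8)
I would prove the two assertions together by transfinite induction on $i$, establishing (1) for $Q_i$ first and then deriving (2) at level $i$ from (1) and the inductive hypothesis. Write $D_k=\deg_x Q_k$; all the products $\prod_{m\le k}\alpha_m$ below are finite because all but finitely many $\alpha_m$ with $m\le k$ equal $1$. The base cases are immediate: $Q_1=x$ is monic with $D_1=1=\alpha_1$ (with the convention $\alpha_1=1$), and every $1$-standard expansion not involving $Q_1$ lies in $K$, hence has $x$-degree $0<D_1$; similarly $Q_2=\sum_{j=0}^{\alpha_2}b_jx^j$ with $b_{\alpha_2}=1$ is monic of degree $\alpha_2=\alpha_1\alpha_2$, and a $2$-standard monomial not involving $Q_2$ has the form $cQ_1^{\gamma_1}$ with $c\in K$ and $0\le\gamma_1<\alpha_{1+}=\alpha_2$, hence $x$-degree $\gamma_1<D_2$.

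The single elementary identity used throughout is that, once (1) is known for all indices $\le i$, one has $D_{j+}=\alpha_{j+}D_j$ for every $j<i$. Indeed $D_{j+}/D_j=\prod_{j<m\le j+}\alpha_m$, and $\alpha_m=1$ for all $m$ with $j<m<j+$: this is trivial when $j$ is $i$-essential (then $j+=j+1$), and when $j$ is $i$-inessential it is built into the definition of $i$-inessentiality (then $j+=j+\omega$ and $\alpha_{j+t}=1$ for every finite $t\ge1$). For the inductive step of (1): if $i$ has an immediate predecessor, use the standard form (\ref{standardform}). By the inductive hypothesis the leading term $Q_{i-1}^{\alpha_i}$ is monic of $x$-degree $\alpha_iD_{i-1}=\prod_{m\le i}\alpha_m$, while every other summand $\bigl(\sum_{\bar\gamma_{i-1}}c_{ji\bar\gamma_{i-1}}\mathbf Q_{i-1}^{\bar\gamma_{i-1}}\bigr)Q_{i-1}^j$ with $0\le j<\alpha_i$ has $x$-degree $<(j+1)D_{i-1}\le\alpha_iD_{i-1}$, using (2) at level $i-1$ to bound the $(i-1)$-standard coefficients not involving $Q_{i-1}$; hence $Q_i$ is monic of degree $\prod_{m\le i}\alpha_m$. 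If $i$ is a limit ordinal (so $char\ k_\nu=p>0$) the same argument applies to (\ref{standardform2}): the leading term $Q_{i_0}^{p^{e_i}}$ is monic of $x$-degree $p^{e_i}D_{i_0}=\prod_{m\le i}\alpha_m$ (using that $\alpha_m=1$ for $i_0<m<i$ and $\alpha_i=p^{e_i}$), while $c_{0i_0}$ and the terms $c_{p^ji_0}Q_{i_0}^{p^j}$ with $j<e_i$ have strictly smaller $x$-degree because $\deg_x c_{p^ji_0}<D_{i_0}$ by (2) at level $i_0$ and $p^j+1\le p^{e_i}$.

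For (2) at level $i$, since the $x$-degree of a sum is at most the maximum of those of its summands, it suffices to bound a single $i$-standard monomial $c\prod_{j<i}Q_j^{\gamma_j}$ with $c\in K$, the multiindex $\{\gamma_j\}_{j\le i}$ standard with respect to $\{\alpha_j\}_{j\le i}$ and $\gamma_i=0$; its $x$-degree is $\sum_{j<i}\gamma_jD_j$. Let $j_1<\dots<j_m$ enumerate the $j<i$ with $\gamma_j\ne0$ (there is nothing to prove if there are none). I would show by induction on $R$ that $\sum_{r=1}^{R}\gamma_{j_r}D_{j_r}<D_{(j_R)+}$; for $R=m$ this gives $\sum_{j<i}\gamma_jD_j<D_{(j_m)+}\le D_i$, since $(j_m)+\le i$ (as $j_m<i$) and $D$ is non-decreasing, which is (2). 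The case $R=1$ is $\gamma_{j_1}D_{j_1}<\alpha_{(j_1)+}D_{j_1}=D_{(j_1)+}$ by standardness and the identity above. For the passage from $R$ to $R+1$, the crucial point is $(j_R)+\le j_{R+1}$; granting it, the inductive hypothesis and monotonicity of $D$ give $\sum_{r=1}^{R+1}\gamma_{j_r}D_{j_r}<D_{(j_R)+}+\gamma_{j_{R+1}}D_{j_{R+1}}\le(1+\gamma_{j_{R+1}})D_{j_{R+1}}\le\alpha_{(j_{R+1})+}D_{j_{R+1}}=D_{(j_{R+1})+}$.

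The degree computations in the step for (1) are routine. The one genuinely delicate point is the inequality $(j_R)+\le j_{R+1}$ in (2). It is clear when $j_R$ is $i$-essential, since then $(j_R)+=j_R+1\le j_{R+1}$. When $j_R$ is $i$-inessential it is precisely here that the cardinality clause in the definition of a standard multiindex is needed: if one had $j_R<j_{R+1}<j_R+\omega=(j_R)+$, then $j_{R+1}$ would also be $i$-inessential with $(j_{R+1})+=(j_R)+$, so the set $\{\,j<(j_R)+\mid j+=(j_R)+\text{ and }\gamma_j\ne0\,\}$ would contain both $j_R$ and $j_{R+1}$, contradicting that its cardinality is at most one. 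Getting this bookkeeping about the map $j\mapsto j+$ across inessential blocks exactly right is what makes the partial-sum induction close, and is where I would expect to spend most of the effort.
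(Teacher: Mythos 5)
Your proof is correct, but note that the paper itself gives no argument for this statement: Proposition \ref{prop21} is simply recalled from \cite{HOS} (page 1044), so there is no in-paper proof to compare with, and what you have written is a reconstruction of the argument that lives in the cited source. Your transfinite induction hits exactly the points that make the statement work with the definitions as recalled here: the identity $\deg_x Q_{j+}=\alpha_{j+}\deg_x Q_j$, which holds because $\alpha_m=1$ for all $m$ strictly between $j$ and $j+$ (trivially when $j$ is essential, by definition of inessentiality otherwise, and with $j+\le i$ so the inductive hypothesis applies at level $j+$); the degree count in the recursive formulae (\ref{standardform}) and (\ref{standardform2}), where part (2) at the previous level bounds the coefficients so that the leading term $Q_{i-1}^{\alpha_i}$ (resp. $Q_{i_0}^{p^{e_i}}$) dominates and keeps $Q_i$ monic; and, for part (2), the telescoping bound $\sum_{r\le R}\gamma_{j_r}\deg_x Q_{j_r}<\deg_x Q_{(j_R)+}$, whose inductive step rests on $(j_R)+\le j_{R+1}$. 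You correctly identify that this last inequality is precisely where the cardinality clause in the definition of a standard multiindex (at most one nonzero $\gamma_j$ among the $j$ with a given $j+$ in an inessential block, together with condition (\ref{gammaalpha})) is needed; without it the partial-sum induction would not close. The only conventions you rely on ($\alpha_1=1$, and that the elements of $\Lambda$ strictly between $j$ and $j+\omega$ are exactly the $j+t$, $t\in\mathbb N$) are consistent with the notation of \S\ref{HOSkey}, so the argument stands as a complete proof of the quoted proposition.
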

\medskip

We recall a result from the statement of Corollary 25 in \cite{HOS}:
\begin{prop}\label{cor25}(\cite{HOS}, page 1045, Corollary 25)
We have
\begin{align*}
\beta_{i}>\alpha_{i}\beta_{i-1}&\ \ \ \ \ if\ (i-1)\ exists&\\
\beta_{i}>p^{e_{i}}\overline{\beta_{i}}&\ \ \ \ \ otherwise.&
\end{align*}
\end{prop}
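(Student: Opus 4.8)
The plan is to treat the two cases of the statement separately, obtaining the strict inequality in each from the fact that $Q_i$ is built as a lifting of a \emph{vanishing} initial form, so that $\mu(Q_i)$ strictly exceeds the $\mu$-value of the leading term of $Q_i$. Suppose first that $i-1$ exists, so that $Q_i$ is given by the standard expansion (\ref{standardform}). The first step is to check that every monomial on the right-hand side of (\ref{standardform}) has $\mu$-value exactly $\alpha_i\beta_{i-1}$: this is clear for the leading monomial $Q_{i-1}^{\alpha_i}$, and a monomial $\bigl(c_{ji\bar\gamma_{i-1}}\mathbf Q_{i-1}^{\bar\gamma_{i-1}}\bigr)Q_{i-1}^{j}$ has $\mu$-value $\nu(c_{ji\bar\gamma_{i-1}})+\sum_{q<i-1}\gamma_q\beta_q+j\beta_{i-1}$ (since $\mu$ restricts to $\nu$ on $K$ and is additive on products, with $\mu(Q_q)=\beta_q$), a quantity which by property (2) of (\ref{standardform}) --- equivalently, by the homogeneity of the minimal relation (\ref{standardformini}) --- does not depend on the monomial, hence equals the value $\alpha_i\beta_{i-1}$ of the leading one. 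Thus $\beta_i=\mu(Q_i)\ge\alpha_i\beta_{i-1}$. To upgrade this to a strict inequality, note that (\ref{standardformini}) says precisely that the sum of the $\init_\mu$ of \emph{all} these monomials vanishes in $G_\mu$ (here one uses that $\init_\mu$ is multiplicative, $G_\mu$ being a domain, and that $\init_\mu c=\init_\nu c$ for $c\in K$); since all the monomials have the same $\mu$-value $\alpha_i\beta_{i-1}$, the homogeneous component of $\init_\mu Q_i$ in that degree is this vanishing sum and is therefore $0$, whence $\beta_i=\mu(Q_i)>\alpha_i\beta_{i-1}$.

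Suppose now that $i-1$ does not exist, so that $char\ k_\nu=p>0$, $i=i_0+$ for an appropriate index $i_0<i$, and $Q_i$ is the weakly affine $i_0$-standard expansion (\ref{standardform2}) of degree $\alpha_i=p^{e_i}$ in $Q_{i_0}$. The inequality $\beta_i\ge p^{e_i}\bar\beta_i$ is part of the data attached to (\ref{standardform2}), so the task is to exclude equality, and the mechanism is parallel. Working with the valuation obtained from $i_0$-standard expansions by assigning $Q_{i_0}$ the value $\bar\beta_i$ (legitimate because $\bar\beta_i>\beta_q$ for all $q<i$, so in particular $\bar\beta_i>\mu(Q_{i_0})$), the relations $p^{j}\bar\beta_i+\nu(c_{p^{j}i_0})=p^{e_i}\bar\beta_i$ make every term $c_{p^{j}i_0}Q_{i_0}^{p^{j}}$ of (\ref{standardform2}) acquire the common value $p^{e_i}\bar\beta_i$, while the constant term $c_{0i_0}$ has value at least $p^{e_i}\bar\beta_i$; and the defining feature of a weakly affine limit key polynomial is exactly that the initial forms of the terms $c_{p^{j}i_0}Q_{i_0}^{p^{j}}$ cancel (the generalized Artin--Schreier relation), so $Q_i$ has value strictly larger than $p^{e_i}\bar\beta_i$ with respect to that auxiliary valuation. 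Transferring this strict inequality to $\mu$ --- via the relation between the auxiliary valuation, the truncations $\nu_{i_0+t}$ ($t\in\N$ large), and $\mu$, together with the fact that $Q_i$ is a limit key polynomial for $\{\nu_{i_0+t}\}_{t\in\N}$ (cf. the Vaqui\'e analogue in Remark \ref{minimaldegreelimit}) --- then yields $\beta_i=\mu(Q_i)>p^{e_i}\bar\beta_i$.

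The first case is routine: it is just an unwinding of properties (2) and (3) of the standard form (\ref{standardform}). The main obstacle is the second case, where the cancellation that produces strictness lives not at the level of $\mu$ --- relative to which $p^{e_i}\bar\beta_i$ need not be the value of any single term of (\ref{standardform2}) --- but at the level of the auxiliary valuation and of the truncations $\nu_{i_0+t}$ for large $t$; correctly passing between these valuations, and keeping track of the constant term $c_{0i_0}$ so that it neither lowers the relevant value nor spoils the cancellation, is where the detailed structure of the limit step of the HOS construction must be brought in.
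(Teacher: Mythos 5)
A preliminary remark: the paper never proves this statement --- it is quoted from \cite{HOS} (Corollary 25) as part of the background recalled in \S\ref{HOSkey} --- so there is no internal proof to compare yours against; what follows is an assessment of your argument on its own terms. Your first case is correct and is the expected argument: by property (2) attached to (\ref{standardform}) (together with the homogeneity of the minimal relation) all monomials on the right-hand side have the common $\mu$-value $\alpha_i\beta_{i-1}$, and (\ref{standardformini}) says exactly that the sum of their images in the degree-$\alpha_i\beta_{i-1}$ component of $G_\mu$ vanishes, whence $\beta_i>\alpha_i\beta_{i-1}$.

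The limit case, however, is not proved, and the mechanism you propose cannot work as stated. Your auxiliary valuation (the augmented valuation obtained by assigning $Q_{i_0}$ the value $\bar\beta_i$) assigns to an $i_0$-standard expansion the \emph{minimum} of the values of its terms; by that very definition no cancellation among the terms of (\ref{standardform2}) can push the value of $Q_i$ above $p^{e_i}\bar\beta_i$, which is already the value of the monic leading term, so the claim that ``$Q_i$ has value strictly larger than $p^{e_i}\bar\beta_i$ with respect to that auxiliary valuation'' is impossible. A cancellation argument would have to take place in $G_\mu$, as in your first case, but there the terms of (\ref{standardform2}) do not share the $\mu$-value $p^{e_i}\bar\beta_i$ (their $\mu$-values involve $\beta_{i_0}<\bar\beta_i$), so that route is closed as well. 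Moreover, your auxiliary claim that $\mu(c_{0i_0})\ge p^{e_i}\bar\beta_i$ is not among the properties listed after (\ref{standardform2}) and is false in the typical situation: in the example of \S\ref{example_limit}, written with respect to $Q_{i_0}=Q_{x,i_0}$, the constant term of $f=x^p-y^2-z$ is $Q_{y,2i_0+2}/z^{2^{2i_0}p^{i_0+1}-p}$, of value $p\beta_{i_0}<p\bar\beta_i$; it is precisely the cancellation between this constant term and $Q_{i_0}^{p}$ at the level of each truncation $\nu_{i_0+t}$ that drives the limit step, not a cancellation among the terms $c_{p^ji_0}Q_{i_0}^{p^j}$ at level $\bar\beta_i$. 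From the properties actually recalled in this paper one can only extract $\beta_i\ge p^{e_i}\bar\beta_i$, which is part of the inductive data; the strictness --- the entire content of the second line of the proposition --- must come from the precise construction of $Q_i$ and $\bar\beta_i$ in \cite{HOS}, which your closing ``transfer'' step invokes only programmatically. So the second half of the statement remains unproved in your write-up.
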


\begin{prop}\label{prop36}(\cite{HOS}, page 1051) Consider an ordinal $l\in\Lambda$.
Let $y$ be a polynomial in $K[x]$ of degree strictly less than $\deg_{x}(Q_{l+1})=\prod\limits_{i=1}^{l+1}\alpha_{i}$. Then $\mu(y)=\nu_{l}(y)$.
\end{prop}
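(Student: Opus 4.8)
The plan is to evaluate both $\nu_l$ and $\mu$ on the $l$-standard expansion of $y$, and to show that a strict inequality $\nu_l(y)<\mu(y)$ would produce an algebraic relation for $\init_\mu Q_l$ of degree below $\alpha_{l+1}$, contradicting the minimality property built into the defining equation of $Q_{l+1}$.

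We may assume $y\ne 0$. First I would fix the (unique) $l$-standard expansion $y=\sum_{j=0}^{s_l}d_{jl}Q_l^j$, where each $d_{jl}$ is an $l$-standard expansion not involving $Q_l$ and $d_{s_l l}\ne0$, and extract the degree bound. By Proposition \ref{prop21}(2) we have $\deg_x d_{jl}<\deg_x Q_l$ for every $j$, so the $x$-degrees of the summands $d_{jl}Q_l^j$ lie in the pairwise disjoint intervals $[\,j\deg_x Q_l,\,(j+1)\deg_x Q_l)$; hence $\deg_x y=\deg_x d_{s_l l}+s_l\deg_x Q_l\ge s_l\deg_x Q_l$. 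Since $\deg_x y<\deg_x Q_{l+1}=\alpha_{l+1}\deg_x Q_l$ by hypothesis and Proposition \ref{prop21}(1), this forces $s_l\le\alpha_{l+1}-1$. As $\nu_l(y)\le\mu(y)$ always holds, it remains to exclude $\nu_l(y)<\mu(y)$.

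Suppose $\nu_l(y)<\mu(y)$, and set $\delta=\nu_l(y)=\min_j\{j\beta_l+\mu(d_{jl})\}$ and $S=\{\,j\mid j\beta_l+\mu(d_{jl})=\delta\,\}$. Since $\mu$ is a valuation with $\mu(Q_l)=\beta_l$, the summands $d_{jl}Q_l^j$ with $j\in S$ have $\mu$-value exactly $\delta$ and the remaining ones have value $>\delta$; the assumption $\mu(y)>\delta$ then says that the leading forms of the value-$\delta$ summands cancel in $G_\mu$, i.e., using multiplicativity of $\init_\mu$,
$$
\sum_{j\in S}(\init_\mu d_{jl})(\init_\mu Q_l)^j=0 .
$$
This is a nonzero polynomial relation for $\init_\mu Q_l$ of degree $\le s_l\le\alpha_{l+1}-1$: the coefficients indexed by $j\in S$ are $\init_\mu d_{jl}\ne0$, because $j\in S$ forces $d_{jl}\ne0$.

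The step I expect to be the main obstacle is checking that these coefficients actually lie in $G_\nu[\init_\mu\mathbf{Q}_l]$, not merely in $G_\mu$: the minimality of the relation is only available over the subring $G_\nu[\init_\mu\mathbf{Q}_l]$, where by equation (\ref{standardformini}) (applied with $i=l+1$) the minimal algebraic relation satisfied by $\init_\mu Q_l$ has degree exactly $\alpha_{l+1}$. To handle it I would write each $d_{jl}$ with $j\in S$ as a sum of $l$-standard monomials $c\,\mathbf{Q}_l^{\bar\gamma}$ not involving $Q_l$; the non-cancellation axiom (\ref{inimudbetaj}) of Definition \ref{standardexpansionnotinvolving} guarantees that $\mu(d_{jl})$ equals the minimum of the values $\nu(c)+\sum_q\gamma_q\beta_q$ over these monomials and that $\init_\mu d_{jl}$ is the (nonzero) sum of the leading forms $\init_\nu c\cdot\init_\mu\mathbf{Q}_l^{\bar\gamma}$ attaining that minimum, so indeed $\init_\mu d_{jl}\in G_\nu[\init_\mu\mathbf{Q}_l]$. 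Substituting into the displayed identity yields a nonzero relation of degree $<\alpha_{l+1}$ for $\init_\mu Q_l$ over $G_\nu[\init_\mu\mathbf{Q}_l]$, contradicting (\ref{standardformini}); hence $\nu_l(y)=\mu(y)$. The degenerate cases cause no trouble: if $\alpha_{l+1}=1$ then $s_l=0$ and the identity $\mu(y)=\nu_l(y)=\mu(d_{0l})$ is immediate, while if $S$ is a singleton the relation $(\init_\mu d_{j_0l})(\init_\mu Q_l)^{j_0}=0$ already contradicts the fact that $G_\mu$ is a domain.
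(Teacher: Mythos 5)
The paper does not actually prove Proposition \ref{prop36}: it is imported verbatim from \cite{HOS} (page 1051), so your argument can only be checked against the framework recalled in \S\ref{HOSkey}, and against that framework it is correct and is essentially the intended argument. Your three main steps all go through: the degree bound $s_l\le\alpha_{l+1}-1$ follows from both parts of Proposition \ref{prop21} exactly as you say (the top term $d_{s_l l}Q_l^{s_l}$ cannot be cancelled by the lower ones, and $\deg_xQ_{l+1}=\alpha_{l+1}\deg_xQ_l$); the assumption $\nu_l(y)<\mu(y)$ does force $\sum_{j\in S}(\init_\mu d_{jl})(\init_\mu Q_l)^j=0$ in $G_\mu$; and you correctly isolated the one point that needs care, namely that each $\init_\mu d_{jl}$ lies in $G_\nu[\init_\mu\mathbf Q_l]$, which is precisely what the non-cancellation requirement (\ref{inimudbetaj}) in the definition of an $l$-standard expansion not involving $Q_l$ guarantees (the monomials involved use only $Q_q$ with $q<l$, so their initial forms land in the right subring). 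Since the leading coefficient of your relation (the one for $j=\max S$) is nonzero, you get a nonzero relation for $\init_\mu Q_l$ over $G_\nu[\init_\mu\mathbf Q_l]$ of degree at most $\alpha_{l+1}-1$, contradicting property 3 of the recursive construction, i.e.\ the minimality of (\ref{standardformini}) applied with $i=l+1$; this is legitimate because $l+1$ always has the immediate predecessor $l$, so $Q_{l+1}$ is of the form (\ref{standardform}) regardless of whether $l$ itself is a limit ordinal, and the statement presupposes $l+1\in\Lambda$ by referring to $Q_{l+1}$. Your handling of the degenerate cases ($\alpha_{l+1}=1$, $S$ a singleton) is also fine; the only cosmetic caveat is that the $l$-standard expansion is unique only at the level of the coefficients $d_{jl}$ (by Euclidean division), which is all your argument uses.
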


\begin{deft}
Let $h=\sum\limits_{j=0}^sd_{ji}Q_{i}^{j}$ be an $i$-standard expansion, let $\overline{Q}_{i}$ be a variable, and let $\beta_{i}=\mu(Q_{i})$.
We define $$ S_{i}(h, \beta_{i}): = \left\{ j\in\left\{0,...,s\right\}\ |\ j\beta_{i}+\mu(d_{ji})=\nu_{i}(h) \right\} $$
$$
in_{i}(h):= \sum_{j\in S_{i}(h, \beta_{i})} in_{\mu}d_{ji} \overline{Q}_{i}^j
$$
We define $\delta_{i}(h):=\deg_{\overline{Q}_{i}}in_{i}(h)$.
\end{deft}

We recall a result from the statement of Proposition 37 in \cite{HOS}:
\begin{prop}\label{prop37}(\cite{HOS}, page 1044)
We have $\alpha_{i+1}\delta_{i+1}(h)\leq \delta_{i}(h)$.
\end{prop}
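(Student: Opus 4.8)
The plan is to fix an $(i+1)$-standard expansion $h=\sum_{j=0}^sd_jQ_{i+1}^j$ of $h$, with $d_j=d_{j,i+1}$ and $d_s\neq 0$, to read off $J:=\delta_{i+1}(h)$ from it, and then to compute just enough of the $\nu_i$-leading form $in_i(h)$ --- by passing to the $i$-standard expansions of the summands $d_jQ_{i+1}^j$ --- to see that $\delta_i(h)\ge\alpha_{i+1}$ times the index that realizes $\delta_{i+1}(h)$. Write $n=\alpha_{i+1}$ and $\beta=\beta_i$.

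\emph{Preliminaries.} First I would collect three facts, each available from the results quoted above. (a) By Proposition \ref{prop21}(2) every $d_j$ has $\deg_xd_j<\deg_xQ_{i+1}=n\deg_xQ_i$; hence by Proposition \ref{prop36} (applied with $l=i$) one has $\mu(d_j)=\nu_i(d_j)$, and, comparing $x$-degrees term by term in the $i$-standard expansion $d_j=\sum_re_{j,r}Q_i^r$ (the $e_{j,r}$ not involving $Q_i$, hence of $x$-degree $<\deg_xQ_i$), only exponents $r\le n-1$ can occur, so $\delta_i(d_j)\le n-1$. (b) The defining relation (\ref{standardform}) of $Q_{i+1}$ (or, when $i=1$, the analogous formula $Q_2=Q_1^{\alpha_2}+\sum_{r<\alpha_2}b_rQ_1^r$), together with the fact that all of its terms have one and the same $\nu_i$-value, shows that $\nu_i(Q_{i+1})=n\beta$ and that $in_i(Q_{i+1})=\bar Q_i^{\,n}+(\text{lower order in }\bar Q_i)$ is monic of degree $n$. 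Since $G_{\nu_i}$ is a domain, $in_i(\cdot)$ is multiplicative, so for $d_j\neq0$ the element $in_i(d_jQ_{i+1}^j)=in_i(d_j)\cdot in_i(Q_{i+1})^j$ is a polynomial in $\bar Q_i$ of degree exactly $nj+\delta_i(d_j)$ with nonzero leading coefficient. (c) By Proposition \ref{cor25}, $\beta_{i+1}>n\beta$.

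\emph{Reading off the two invariants.} Put, for the indices $j$ with $d_j\neq0$,
$$
f(j)=\mu(d_j)+nj\beta=\nu_i\!\left(d_jQ_{i+1}^j\right),\qquad g(j)=\mu(d_j)+j\beta_{i+1}=\nu_{i+1}\!\left(d_jQ_{i+1}^j\right),
$$
the first equalities using (a) and (b). By the definitions of $\nu_{i+1}$, $S_{i+1}$ and $\delta_{i+1}$, the integer $J=\delta_{i+1}(h)$ is the largest minimizer of $g$. Let $j^\ast$ be the largest minimizer of $f$. For a minimizer $j<j^\ast$ one has, by (a), $nj+\delta_i(d_j)\le nj+(n-1)<n(j+1)\le nj^\ast$, while for $j>j^\ast$ the summand $d_jQ_{i+1}^j$ has strictly larger $\nu_i$-value and therefore contributes nothing to $in_i(h)$. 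Hence the top $\bar Q_i$-degree term of $in_i(d_{j^\ast}Q_{i+1}^{j^\ast})$, which by (b) has degree $nj^\ast+\delta_i(d_{j^\ast})$ and nonzero coefficient, cannot be cancelled; so $\nu_i(h)=f(j^\ast)$ and $\delta_i(h)\ge nj^\ast+\delta_i(d_{j^\ast})\ge nj^\ast$.

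\emph{The key comparison and the main obstacle.} It remains to prove $j^\ast\ge J$; this is the only non-formal step, since a priori the index realizing $\delta_{i+1}(h)$ need not realize $\delta_i(h)$. Suppose $j^\ast<J$. Maximality of $j^\ast$ among minimizers of $f$ gives $f(J)>f(j^\ast)$, i.e.
$$
\mu(d_J)-\mu(d_{j^\ast})+(J-j^\ast)\,n\beta>0,
$$
while minimality of $g$ at $J$ gives $g(J)\le g(j^\ast)$, i.e. $\mu(d_J)-\mu(d_{j^\ast})\le-(J-j^\ast)\beta_{i+1}$. Substituting the second inequality into the first and using $J-j^\ast>0$ together with fact (c) yields $0<(J-j^\ast)(n\beta-\beta_{i+1})<0$, a contradiction. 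Hence $j^\ast\ge J$, and therefore
$$
\delta_i(h)\ge nj^\ast\ge nJ=\alpha_{i+1}\,\delta_{i+1}(h),
$$
as claimed. The heart of the argument is exactly this alignment $j^\ast\ge J$ of the $\nu_i$- and $\nu_{i+1}$-leading indices; what makes it go through is the strict gap $\beta_{i+1}>\alpha_{i+1}\beta_i$ of Proposition \ref{cor25}, and everything else is bookkeeping between the $i$- and $(i+1)$-standard expansions.
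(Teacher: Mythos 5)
The paper itself offers no proof of this statement---it is quoted from \cite{HOS}---so your attempt can only be judged on its own terms. Your skeleton is the natural one and much of it is right: reading $J=\delta_{i+1}(h)$ off the $(i+1)$-standard expansion, comparing it with the largest index $j^\ast$ minimizing $j\mapsto\mu(d_j)+\alpha_{i+1}j\beta_i$, and forcing $j^\ast\ge J$ by playing the two minimality properties against the strict inequality $\beta_{i+1}>\alpha_{i+1}\beta_i$ of Proposition \ref{cor25} is correct; so are the bound $\delta_i(d_j)\le\alpha_{i+1}-1$ from Propositions \ref{prop21} and \ref{prop36}, and the observation that summands with $j<j^\ast$ cannot reach $\overline{Q}_i$-degree $\alpha_{i+1}j^\ast$ (note that this degree argument applies to every $j<j^\ast$, minimizer or not, which is what you actually need).

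The genuine gap is the sentence ``Since $G_{\nu_i}$ is a domain, $in_i(\cdot)$ is multiplicative'', on which the whole computation rests. The domain property gives multiplicativity of the initial-form map into $G_{\nu_i}$, but $in_i$ is read off the $i$-standard expansion, and the $i$-standard expansion of a product is not the product of the expansions: the products of coefficients (of $x$-degree possibly $\ge\deg_xQ_i$) must be re-divided by $Q_i$, and a priori this ``carry'' could raise values or cancel the expected top term, so neither the claim that $\nu_i\bigl(d_{j^\ast}Q_{i+1}^{j^\ast}\bigr)$ is attained at $\overline{Q}_i$-degree $\alpha_{i+1}j^\ast+\delta_i(d_{j^\ast})$ nor even the equality $\nu_i(h)=f(j^\ast)$ is formal; this is precisely the nontrivial content, of the same nature as the proposition being proved, and assuming it amounts to assuming the heart of the matter. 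It can be supplied: for instance, first show that for $p,q$ of degree $<\deg_xQ_i$, writing $pq=u+vQ_i$ by Euclidean division, one has $\mu(u)=\mu(p)+\mu(q)$ and $\mu(v)+\beta_i>\mu(pq)$; this follows by comparing with the previous truncation $\nu_{i_0}$, which agrees with $\mu$ in degree $<\deg_xQ_i$ (Proposition \ref{prop36}) and satisfies $\nu_{i_0}(Q_i)<\beta_i$ (Proposition \ref{cor25}), and it is exactly what guarantees that the canonical form of a product has the expected top coefficient at the minimal value, after which your cancellation analysis over $j<j^\ast$, $j=j^\ast$, $j>j^\ast$ closes. (When $i$ is a limit ordinal one also needs that $\mu$ is the stable value of the truncations $\nu_{i_0+t}$ on all polynomials of degree $<\deg_xQ_i$, a fact from the construction in \cite{HOS} not among the results quoted here.) Alternatively, cite the multiplicativity statements for $in_i$ established in \cite{HOS}; as written, the identity $in_i(d_jQ_{i+1}^j)=in_i(d_j)\,in_i(Q_{i+1})^j$ with degree exactly $\alpha_{i+1}j+\delta_i(d_j)$ is asserted, not proved.
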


\section{The main results: comparison of Vaqui\'e and HOS key polynomials.}\label{Comparison}

Let the notation be as in the previous sections, with $rk\ \nu=1$.
\begin{prop}\label{HOSimpliesVaquie}
We assume that the family of HOS key polynomials
$\textbf{Q}_{i}=\left\{Q_{i}\right\}_{i\in\Lambda}$ is already
defined (\cite{HOS}, part 3). Let $i$ be an ordinal and let $i_0=i-1$ if $i$ admits
an immediate predecessor and $i_0$ as in (\ref{standardform2}) otherwise.

Then $Q_i$ is a Vaqui\'e key polynomial for the valuation $\nu_{i_0}$.

\end{prop}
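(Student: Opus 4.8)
The plan is to verify the three defining properties of a Vaqui\'e key polynomial for $\nu_{i_0}$: that $Q_i$ is monic, $\nu_{i_0}$-minimal, and $\nu_{i_0}$-irreducible. Monicity is immediate from Proposition \ref{prop21}(1), so the work concentrates on the other two. The key structural input is that, by hypothesis, $Q_i$ admits an $i_0$-standard expansion of the form (\ref{standardform}) (if $i$ has an immediate predecessor, with $i_0=i-1$) or (\ref{standardform2}) (in the limit case); in either case, after passing to initial forms, the resulting relation (\ref{standardformini}) (respectively the weakly affine relation after (\ref{standardform2})) is the \emph{minimal} algebraic relation satisfied by $\init_\mu Q_{i_0}$ over $G_\nu[\init_\mu \bold Q_{i_0}]$. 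Since $\init_{\nu_{i_0}} h$ for a polynomial $h$ of degree $<\deg_x Q_i$ coincides with $\init_\mu h$ by Proposition \ref{prop36} (as then $\nu_{i_0}(h)=\mu(h)$), the graded ring $G_{\nu_{i_0}}$ in degrees realized by such $h$ is exactly $G_\nu[\init_\mu\bold Q_{i_0}]$, and $\init_{\nu_{i_0}} Q_i$ is a free variable over this ring (because $\nu_{i_0}(Q_i)=\min_j\{j\beta_{i_0}+\mu(d_{j})\}$ does not force any algebraic relation — this is precisely the content of the standard-expansion machinery: $\init_{\nu_{i_0}}$ of an $i_0$-standard expansion is computed term-by-term and the leading terms do not cancel).

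For $\nu_{i_0}$-minimality, I would argue as follows. Suppose $Q_i$ $\nu_{i_0}$-divides some $f\in K[x]$, i.e. $\init_{\nu_{i_0}} f=\init_{\nu_{i_0}}(h Q_i)$ for some $h$. Write $f$ in its $i_0$-standard expansion and compare degrees: since $\init_{\nu_{i_0}} Q_i$ is a genuine (degree $\alpha_i$ in $\overline{Q}_{i_0}$, hence degree $\deg_x Q_i$ in $x$) homogeneous element that behaves like a polynomial variable over $G_\nu[\init_\mu\bold Q_{i_0}]$, divisibility by $\init_{\nu_{i_0}}Q_i$ in $G_{\nu_{i_0}}$ forces $\deg_x f\ge\deg_x Q_i$. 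Concretely, if $\deg_x f<\deg_x Q_i$ then $\nu_{i_0}(f)=\mu(f)$ and $\init_{\nu_{i_0}} f\in G_\nu[\init_\mu\bold Q_{i_0}]$ has $\overline{Q}_{i_0}$-degree $<\alpha_i$, contradicting that it is a multiple of $\init_{\nu_{i_0}}Q_i$. The analogue of Lemma \ref{lemme1.4} (or a direct Euclidean-division argument by the monic polynomial $Q_i$ inside the graded ring) pins down the minimality.

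For $\nu_{i_0}$-irreducibility, suppose $Q_i$ $\nu_{i_0}$-divides $fg$. Using $i_0$-standard expansions of $f$ and $g$ and the fact that $\init_{\nu_{i_0}}$ is multiplicative, reduce to showing that $\init_{\nu_{i_0}} Q_i$ is a prime (equivalently, irreducible, since $G_{\nu_{i_0}}$ is a domain) homogeneous element of $G_{\nu_{i_0}}$. Since $\init_{\nu_{i_0}}Q_i$ is, up to a unit, the image of the minimal polynomial $g_1$ of $\init_\mu Q_{i_0}$ over $G_\nu[\init_\mu\bold Q_{i_0}]$ evaluated with $\overline{Q}_{i_0}$ playing the role of a free variable, its irreducibility follows from the minimality of the defining relation (\ref{standardformini}): a nontrivial factorization of $\init_{\nu_{i_0}}Q_i$ would, after substituting $\overline{Q}_{i_0}\mapsto\init_\mu Q_{i_0}$, yield a relation of smaller degree, contradicting minimality — here one uses that the coefficients of the factors, being quasi-homogeneous, lie in $G_\nu[\init_\mu\bold Q_{i_0}]$, exactly as in the parenthetical remark after (\ref{inihfact}).

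The main obstacle I anticipate is the bookkeeping in the limit-ordinal case: there $i_0$ is $l$-inessential and $Q_i$ is only \emph{weakly affine} of the special shape (\ref{standardform2}), so I must check that the weakly affine minimal relation is still irreducible as a polynomial in $\overline{Q}_{i_0}$ over $G_\nu[\init_\mu\bold Q_{i_0}]$ — this is where the "generalized Artin--Schreier" structure (an additive polynomial $c_0+\sum_j c_{p^j}X^{p^j}$) enters, and irreducibility of such polynomials requires the inequality $\beta_i\ge p^{e_i}\bar\beta_i$ together with $p^j\bar\beta_i+\nu(c_{p^ji_0})=p^{e_i}\bar\beta_i$ to rule out factorizations, rather than a naive degree count. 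I would handle this by invoking Proposition \ref{cor25} and the explicit value equalities listed after (\ref{standardform2}), reducing to the fact that the relevant relation is, by construction, the minimal algebraic relation.
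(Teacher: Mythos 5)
Your overall strategy (prove $\nu_{i_0}$-minimality and $\nu_{i_0}$-irreducibility by analysing $\init_{\nu_{i_0}}Q_i$ inside the graded algebra $G_{\nu_{i_0}}$, using the minimality of the relation (\ref{standardformini})) is genuinely different from the paper's argument, and it has a real gap at the irreducibility step. Vaqui\'e $\nu_{i_0}$-irreducibility means that $\init_{\nu_{i_0}}Q_i$ is a \emph{prime} homogeneous element of $G_{\nu_{i_0}}$, and your parenthetical ``prime (equivalently, irreducible, since $G_{\nu_{i_0}}$ is a domain)'' is not a valid equivalence: in a general domain irreducible elements need not be prime. What your degree/minimal-relation argument can deliver is irreducibility of $\init_{\nu_{i_0}}Q_i$ as a polynomial in $\overline Q_{i_0}$ over the subring generated by initial forms of lower-degree polynomials; to upgrade this to primality one needs structural information on that subring, essentially that its nonzero homogeneous elements are units in $G_{\nu_{i_0}}$ (the analogue of Lemma \ref{lemme1.4}(2) for $\nu_{i_0}$). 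But that lemma applies to \emph{augmented} valuations, and the fact that $\nu_{i_0}$ is an augmented valuation built on the key polynomial $Q_{i_0}$ is precisely what the paper deduces \emph{from} Proposition \ref{HOSimpliesVaquie} (see the Remark following it), so invoking it here risks circularity; you would have to prove the unit property (or primality) directly. In the same vein, your identification of ``the graded ring $G_{\nu_{i_0}}$ in degrees realized by such $h$'' with $G_\nu[\init_\mu\bold Q_{i_0}]$, and the substitution $\overline Q_{i_0}\mapsto\init_\mu Q_{i_0}$ used to contradict minimality, presuppose a well-defined multiplicative comparison between pieces of $G_{\nu_{i_0}}$ and $G_\mu$ (legitimate only where $\nu_{i_0}=\mu$, and needing verification that the coefficients of a putative factorization stay in that range); this is asserted rather than proved. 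Finally, the limit-ordinal case, which you flag as the main obstacle, is left as a sketch.

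For comparison, the paper's proof avoids the graded structure entirely and is uniform in both the successor and limit cases. Its two inputs are Proposition \ref{cor25}, which gives $\mu(Q_i)=\beta_i>\nu_{i_0}(Q_i)$, and Proposition \ref{prop36}, which gives $\mu=\nu_{i_0}$ on all polynomials of degree strictly less than $\deg_xQ_i$. If $Q_i$ $\nu_{i_0}$-divides $f$, writing $\nu_{i_0}(f-hQ_i)>\nu_{i_0}(f)=\nu_{i_0}(hQ_i)$ and using $\mu(hQ_i)>\nu_{i_0}(hQ_i)$ yields $\mu(f)>\nu_{i_0}(f)$, whence $\deg_xf\ge\deg_xQ_i$ by Proposition \ref{prop36}; this is minimality. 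For irreducibility, $\mu(fg)>\nu_{i_0}(fg)$ forces $\mu(f)>\nu_{i_0}(f)$ (say), then Euclidean division $f=qQ_i+r$ with $\mu(r)=\nu_{i_0}(r)$ gives $\nu_{i_0}(r)>\nu_{i_0}(f)=\nu_{i_0}(qQ_i)$, i.e.\ $Q_i$ $\nu_{i_0}$-divides $f$. If you want to salvage your route, you should either prove the unit property of the low-degree homogeneous part of $G_{\nu_{i_0}}$ independently, or replace the primality claim by exactly this Euclidean-division argument.
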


\begin{proof}:
\begin{enumerate}
\item $Q_{i}$ is $\nu_{i_{0}}$-minimal:\\
 Suppose that $Q_i$ $\nu_{i_0}$-divides $f$, then there exists $h\in K[x]$ such that
	\[ \nu_{i_0}(f-hQ_i)>\nu_{i_0}(f)=\nu_{i_0}(hQ_i).
\]

On the other hand we have: $\mu(hQ_i)=\mu(h)+\beta_i>\nu_{i_0}(h)+\alpha_{i}\beta_{i_0}=\nu_{i_0}(hQ_i)=\nu_{i_0}(f)$ where the strict inequality holds by Proposition \ref{cor25} .\\
Then we have the inequality
\[\mu(f)\geq\inf\{\mu(f-hQ_i), \mu(hQ_i)\}
			\geq\inf\{\nu_{i_0}(f-hQ_i), \mu(hQ_i)\}
			>\nu_{i_0}(f).\]
Then $\deg_{x}f\geq\deg_{x}Q_{i}$ because otherwise by Proposition \ref{prop36}
we would have
$$
\mu(f)=\nu_{i_0}(f).
$$	
\item $Q_{i}$ is $\nu_{i_{0}}$-irreducible:\\
Suppose that $Q_i$ $\nu_{i_0}$-divides $f.g$, as above, we find
\[
	\mu(f.g)>\nu_{i_0}(f.g)
	\]
	Now, either $\mu(f)>\nu_{i_0}(f)$ and
        $\deg(f)\geq\deg(Q_{i})$, or $\mu(g)>\nu_{i_0}(g)$ and
        $\deg(g)\geq\deg(Q_{i})$.\\
	Suppose that $\mu(f)>\nu_{i_0}(f)$ and let $f=qQ_{i}+r$ be
        the Euclidean division of $f$ by $Q_i$ with $q\neq0$ because
        $\deg(f)\geq\deg(Q_{i})$, and $\mu(r)=\nu_{i_0}(r)$
        because $\deg(r)<\deg(Q_{i})$.\\
	Hence
\[ \nu_{i_0}(r)=\mu(r)\geq\inf(\mu(f),
\mu(qQ_i))>\inf(\nu_{i_0}(f), \nu_{i_0}(qQ_i))
	\]
	\[ \Rightarrow \nu_{i_0}(r)>\nu_{i_0}(f)=\nu_{i_0}(qQ_i) \]
	and $Q_{i}$ $\nu_{i_0}$-divides $f$.
	\item $Q_{i}$ is monic by definition.
\end{enumerate}

\vspace{-1.0cm}\[\qedhere\]				
\end{proof}

\begin{rek}
	Let $i$ and $i_0$ be as in Proposition \ref{HOSimpliesVaquie}.
	As HOS key polynomials are also Vaqui\'e key polynomials for $\nu_{i_0}$, and as $\mu(d_{ji})$ on the right side of (\ref{eq:nui}) in Definition \ref{nui} is equal to $\nu_{i_{0}}(d_{ji})$ by Proposition \ref{prop36}, the $i$-truncation $\nu_{i}$ is also an augmented valuation defined by the valuation $\nu_{i_{0}}$ and the key polynomial $Q_{i}$.
\end{rek}

\begin{cor}\label{eliminatingsamedegree}
Assume that there exists an ordinal $i$ and a strictly positive integer $t$ such that $deg_{x}Q_{i}=deg_{x}Q_{i+t}$.
Then the polynomial $Q_{i+t}$ is also a Vaqui\'e key polynomial for the valuation $\nu_{i_0}$.
\end{cor}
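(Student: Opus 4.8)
The plan is to reduce the statement to the argument already carried out in the proof of Proposition~\ref{HOSimpliesVaquie}, after two preliminary reductions.

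First I would extract from the degree hypothesis that $\alpha_{i+1}=\dots=\alpha_{i+t}=1$: by Proposition~\ref{prop21}(1), $\deg_x Q_i=\prod_{j\le i}\alpha_j$ and $\deg_x Q_{i+t}=\prod_{j\le i+t}\alpha_j$, so equality of these two degrees leaves no room for any $\alpha_{i+j}$ with $1\le j\le t$ to exceed $1$. Hence $\deg_x Q_{i+j}=\deg_x Q_i$ for $0\le j\le t$, and for $1\le j\le t$ the ordinal $i+j$ has an immediate predecessor, so the standard expansion~(\ref{standardform}) of $Q_{i+j}$, in which $\alpha_{i+j}=1$, collapses to $Q_{i+j}=Q_{i+j-1}+u_j$ with $u_j$ an $(i+j-1)$-standard expansion not involving $Q_{i+j-1}$, hence of $x$-degree strictly less than $\deg_x Q_{i+j-1}=\deg_x Q_i$ by Proposition~\ref{prop21}(2). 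Telescoping these identities yields
\[
Q_{i+t}=Q_i+w,\qquad \deg_x w<\deg_x Q_i .
\]

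Second I would establish the one new inequality on which everything hinges, $\nu_{i_0}(Q_{i+t})<\mu(Q_{i+t})$. By Proposition~\ref{HOSimpliesVaquie}, $Q_i$ is $\nu_{i_0}$-minimal, and by the Remark following that proposition $\nu_i=[\nu_{i_0};\nu_i(Q_i)=\beta_i]$ is an augmented valuation, so $\nu_{i_0}(Q_i)<\beta_i=\mu(Q_i)$. Were $\nu_{i_0}(Q_i+w)$ strictly larger than $\nu_{i_0}(Q_i)$, the leading terms of $Q_i$ and $w$ would have to cancel, that is, $\nu_{i_0}(w)=\nu_{i_0}(Q_i)$ and $\init_{\nu_{i_0}}w=-\init_{\nu_{i_0}}Q_i$; but then $Q_i$ would $\nu_{i_0}$-divide $w$, contradicting the $\nu_{i_0}$-minimality of $Q_i$ because $\deg_x w<\deg_x Q_i$. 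Therefore $\nu_{i_0}(Q_{i+t})\le\nu_{i_0}(Q_i)<\beta_i\le\beta_{i+t}=\mu(Q_{i+t})$, the inequality $\beta_i\le\beta_{i+t}$ holding because $\beta_j>\alpha_j\beta_{j-1}\ge\beta_{j-1}$ for $i<j\le i+t$ by Proposition~\ref{cor25}.

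With these two reductions in hand I would re-run the proof of Proposition~\ref{HOSimpliesVaquie} verbatim, with $Q_{i+t}$ in the role of $Q_i$. That proof uses the key polynomial only through two facts: (a) for every $h\in K[x]$ one has $\mu(hQ_i)=\mu(h)+\mu(Q_i)>\nu_{i_0}(h)+\nu_{i_0}(Q_i)=\nu_{i_0}(hQ_i)$, which with $Q_{i+t}$ in place of $Q_i$ follows at once from $\mu(h)\ge\nu_{i_0}(h)$, the additivity of $\nu_{i_0}$ on products, and the inequality just established; and (b) Proposition~\ref{prop36}, invoked to conclude $\mu(f)=\nu_{i_0}(f)$ whenever $\deg_x f<\deg_x Q_i$, which is applied unchanged since $\deg_x Q_{i+t}=\deg_x Q_i$. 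This gives $\nu_{i_0}$-minimality and $\nu_{i_0}$-irreducibility of $Q_{i+t}$; it is monic by definition, so it is a Vaqui\'e key polynomial for $\nu_{i_0}$. The only genuinely new point, and the step requiring some care, is the exclusion of the cancellation $\init_{\nu_{i_0}}w=-\init_{\nu_{i_0}}Q_i$ in the second reduction --- precisely where the $\nu_{i_0}$-minimality of $Q_i$, rather than merely the numerical inequalities of Proposition~\ref{cor25}, is needed.
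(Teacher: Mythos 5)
Your proof is correct, but it reaches the conclusion by a different mechanism than the paper. The paper also starts from the decomposition $Q_{i+t}=Q_i+z_t$ with $\deg_x z_t<\deg_x Q_i$, but it then uses the HOS construction to pin down the value of the tail: $\nu_i(z_t)=\nu_i(Q_i)=\beta_i$, whence $\nu_{i_0}(z_t)=\nu_i(z_t)>\nu_{i_0}(Q_i)$ (Propositions \ref{prop36} and \ref{cor25}), so that $\init_{\nu_{i_0}}Q_{i+t}=\init_{\nu_{i_0}}Q_i$; the corollary then follows by transferring $\nu_{i_0}$-minimality and $\nu_{i_0}$-irreducibility along this $\nu_{i_0}$-equivalence (legitimate because both polynomials are monic of the same degree and the two properties depend only on initial forms), a last step the paper leaves implicit. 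You, by contrast, never compute the value of the tail $w$: you only rule out the cancellation $\init_{\nu_{i_0}}w=-\init_{\nu_{i_0}}Q_i$ using the $\nu_{i_0}$-minimality of $Q_i$, which gives the weaker but sufficient inequality $\nu_{i_0}(Q_{i+t})\le\nu_{i_0}(Q_i)<\beta_i\le\beta_{i+t}=\mu(Q_{i+t})$, and then you re-run the proof of Proposition \ref{HOSimpliesVaquie} with $Q_{i+t}$ in place of $Q_i$, having correctly observed that that proof uses only this inequality, Proposition \ref{prop36} (applicable verbatim since $\deg_x Q_{i+t}=\deg_x Q_i$), additivity of $\mu$ and $\nu_{i_0}$, and monicity. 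What each route buys: the paper's argument is shorter and yields the stronger conclusion that $Q_i$ and $Q_{i+t}$ are $\nu_{i_0}$-equivalent, at the price of invoking the equivalence-transfer principle and the precise form of $z_t$ coming from (\ref{standardform}); your argument avoids any transfer principle and makes explicit the abstract criterion actually needed (a monic polynomial of degree $\deg_x Q_i$ whose $\nu_{i_0}$-value is strictly below its $\mu$-value is a Vaqui\'e key polynomial for $\nu_{i_0}$), at the price of repeating the proposition's proof. Your preliminary reduction via Proposition \ref{prop21}, forcing $\alpha_{i+1}=\dots=\alpha_{i+t}=1$ and telescoping the expansions (\ref{standardform}), is a careful justification of the decomposition that the paper simply asserts.
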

\begin{proof} The polynomial $Q_{i+t}$ can be written as $Q_{i+t}=Q_{i}+z_{t}$ where $z_{t}$ is an $i$-standard expansion not involving $Q_{i}$ with $\nu_{i}(Q_{i})=\nu_{i}(z_{t})$. We have $\nu_{i_0}(z_{t})=\nu_{i}(z_{t})$ by Proposition \ref{prop36}. Now, $\nu_{i}(Q_{i})>\nu_{i_0}(Q_{i})$, so $\nu_{i_0}(z_{t})>\nu_{i_0}(Q_{i})=\nu_{i_0}(Q_{i+t})$. Hence $Q_{i}$ and $Q_{i+t}$ are $\nu_{i_0}$-equivalent.
\end{proof}

Let $\{Q_{i}\}_{i\in\Lambda}$ be a family of HOS key polynomials constructed in \cite{HOS}, and $\{\nu_{i}\}_{i\in\Lambda}$ the correspending family of valuations.

Take an ordinal $i+1\in\Lambda$ which admits an immediate predecessor $i$, and such that $\deg_{x}Q_{i}=\deg_{x}Q_{i+1}$. Let $\Delta_{i}$ be a totally ordered set such that there exists a bijection $\Phi$ between $[\beta_{i};\beta_{i+1}]\subset\nu_{i+1}(K[x])$ and $\Delta_{i}$.

\begin{lem}\label{deltai}
For all $\beta\in]\beta_{i};\beta_{i+1}[$, there exists a polynomial $Q_{\delta}\in K[x]$ with $\delta=\Phi(\beta)$ in $\Delta_{i}$  which satisfies :
$$
Q_{\delta}=Q_{i}+z_{\delta}
$$
with $z_{\delta}$ an i-standard expansion not involving $Q_{i}$,
$$
\nu_{i}(Q_{\delta})=\nu_{i}(Q_{i})=\nu_{i}(z_{\delta})
$$
and
$$
\nu_{i+1}(Q_{\delta})=\beta_{\delta}>\beta_{i}.
$$
\end{lem}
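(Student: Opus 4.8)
The idea is to realise each prescribed value as the $\nu_{i+1}$-value of a small modification of $Q_{i+1}$. Since $\deg_{x}Q_{i}=\deg_{x}Q_{i+1}$, Proposition \ref{prop21} gives $\alpha_{i+1}=1$, and then the $i$-standard expansion (\ref{standardform}) of $Q_{i+1}$ has the form $Q_{i+1}=Q_{i}+z$, with $z$ an $i$-standard expansion not involving $Q_{i}$. From $z=Q_{i+1}-Q_{i}$ and $\mu(Q_{i+1})=\beta_{i+1}>\beta_{i}=\mu(Q_{i})$ one reads off $\mu(z)=\beta_{i}$ (were $\mu(z)>\beta_{i}$, we would get $\mu(Q_{i}+z)=\beta_{i}\neq\beta_{i+1}$), and hence $\init_{\mu}Q_{i}+\init_{\mu}z=0$, their sum $\init_{\mu}Q_{i+1}$ having value $\beta_{i+1}>\beta_{i}$. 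In particular $\nu_{i}(z)=\mu(z)=\beta_{i}$ and $\nu_{i}(Q_{i+1})=\beta_{i}<\beta_{i+1}$.

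Now fix $\beta\in\,]\beta_{i};\beta_{i+1}[\,\cap\,\nu_{i+1}(K[x])$ and choose $w\in K[x]$ with $\deg_{x}w<\deg_{x}Q_{i}$ and $\mu(w)=\beta$ (the existence of $w$ is treated below). Set
\[
z_{\delta}:=z+w,\qquad Q_{\delta}:=Q_{i}+z_{\delta}=Q_{i+1}+w,\qquad\delta:=\Phi(\beta).
\]
That $z_{\delta}$ is again an $i$-standard expansion not involving $Q_{i}$ is seen as follows: $w$ does not involve $Q_{i}$ (its $x$-degree is too low), so it has such an expansion; and when the monomials of $z+w$ are collected according to their $\mu$-value, each monomial of $z$ lies at value $\beta_{i}$ whereas each monomial of $w$ lies at a value $\ge\mu(w)=\beta>\beta_{i}$, so no level of $z_{\delta}$ mixes monomials of $z$ with monomials of $w$ and the non-cancellation condition (\ref{inimudbetaj}) for $z_{\delta}$ follows from the ones for $z$ and for $w$. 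Consequently $\mu(z_{\delta})=\min\{\mu(z),\mu(w)\}=\beta_{i}$, so $\nu_{i}(z_{\delta})=\mu(z_{\delta})=\beta_{i}$.

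Finally we read off the two truncations. The $i$-standard expansion of $Q_{\delta}$ is $Q_{\delta}=z_{\delta}+Q_{i}$, whence
\[
\nu_{i}(Q_{\delta})=\min\{\mu(z_{\delta}),\,\beta_{i}\}=\beta_{i}=\nu_{i}(Q_{i})=\nu_{i}(z_{\delta}),
\]
the required equality. On the other hand $Q_{\delta}=Q_{i+1}+w$ with $\deg_{x}w<\deg_{x}Q_{i+1}$, so this is the $(i+1)$-standard expansion of $Q_{\delta}$, and, using $\beta<\beta_{i+1}$,
\[
\nu_{i+1}(Q_{\delta})=\min\{\mu(w),\,\beta_{i+1}\}=\beta=:\beta_{\delta}>\beta_{i}.
\]
This proves all the assertions.

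The one point requiring real work is the choice of $w$: one must know that every $\beta$ occurring in the statement is the $\mu$-value of some polynomial of $x$-degree $<\deg_{x}Q_{i}$, i.e. lies in $D:=\{\mu(y)\mid y\in K[x],\ \deg_{x}y<\deg_{x}Q_{i}\}$. Note first that, because $\alpha_{i+1}=1$, the identity $\mu(z)=\beta_{i}$ already forces $\beta_{i}\in D$; together with Proposition \ref{prop36} (which makes $\nu_{i}$, and indeed all the truncations of the same-degree segment $\nu_{i},\nu_{i+1},\dots$, agree with $\mu$ on polynomials of $x$-degree $<\deg_{x}Q_{i}$) this shows that the common value group of that segment — cf. the Remark following Definition \ref{continued} — is the group generated by $D$. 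The claim then reduces to: $D$ is a subgroup of $\Gamma'$, and every $\beta\in\nu_{i+1}(K[x])$ with $\beta<\beta_{i+1}$ already lies in $D$ (for instance because such a $\beta$ is attained by a monic polynomial of degree $\deg_{x}Q_{i}$). I expect this last step — extracting a low-degree representative of an intermediate value — to be the main obstacle of the lemma, and it is there that the hypotheses $rk\ \nu=1$, $\beta\in\nu_{i+1}(K[x])$ and $\deg_{x}Q_{i}=\deg_{x}Q_{i+1}$ are genuinely used; the construction of $Q_{\delta}$ itself, given $w$, is entirely formal.
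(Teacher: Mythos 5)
Your construction is the same as the paper's: the paper also takes a representative $h_{\beta}$ of $\beta$, replaces it by a polynomial $h_{\delta}$ of degree $<\deg_{x}Q_{i+1}=\deg_{x}Q_{i}$ with $\mu(h_{\delta})=\beta$, and sets $Q_{\delta}=Q_{i+1}+h_{\delta}=Q_{i}+(z_{i}+h_{\delta})$; the verification of $\nu_{i}(Q_{\delta})=\nu_{i}(Q_{i})=\nu_{i}(z_{\delta})$ and $\nu_{i+1}(Q_{\delta})=\beta>\beta_{i}$ is then the same routine computation you carry out. However, there is a genuine gap: the existence of your $w$ (the paper's $h_{\delta}$), i.e.\ of a polynomial of $x$-degree strictly less than $\deg_{x}Q_{i}$ whose $\mu$-value is the prescribed intermediate value $\beta$, is exactly the substance of the lemma, and you do not prove it. You explicitly defer it, proposing instead to show that the set $D$ of values attained in degree $<\deg_{x}Q_{i}$ is a subgroup containing every $\beta<\beta_{i+1}$ in $\nu_{i+1}(K[x])$; neither claim is established in your text, and the suggested route (``such a $\beta$ is attained by a monic polynomial of degree $\deg_{x}Q_{i}$'') is itself unproved and does not by itself yield a representative of lower degree.

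The paper closes precisely this step by quoting Vaqui\'e's Lemma \ref{lemme1.4}: since $\nu_{i+1}$ is the augmented valuation $[\nu_{i};\nu_{i+1}(Q_{i+1})=\beta_{i+1}]$ (see the Remark following Proposition \ref{HOSimpliesVaquie}), a polynomial whose value is not raised by the augmentation is $\nu_{i+1}$-equivalent to a polynomial $h_{\delta}$ of degree $<\deg_{x}Q_{i+1}$, and then $\mu(h_{\delta})=\nu_{i}(h_{\delta})=\nu_{i+1}(h_{\delta})=\beta$ by Proposition \ref{prop36}. So the missing ingredient is not some new group-theoretic fact about $D$ but an already available structural lemma about augmented valuations; without invoking it (or reproving its content), your argument establishes the lemma only under the unverified hypothesis that a suitable $w$ exists, which is where all the difficulty lies.
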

\begin{proof}
Take any $\beta\in]\beta_{i};\beta_{i+1}[$, put $\delta=\Phi(\beta)\in \Delta_{i}$.\\
As $\beta\in\nu_{i+1}(K[x])$ then there exist $h_{\beta}\in K[x]$ such that $\nu_{i+1}(h_{\beta})=\beta$. As $\nu_{i+1}(h_{\beta})=\beta<\beta_{i+1}$ then $\mu(h_{\beta})=\nu_{i+1}(h_{\beta})$ and by Lemma \ref{lemme1.4}
there exists an $h_{\delta}\in K[x]$, such that
$$
\deg_{x}(h_{\delta})<\deg_{x}(Q_{i+1})=\deg_{x}(Q_{i})
$$
and $\mu(h_{\delta})=\mu(h_{\beta})=\beta$.\\
 Put $Q_{\delta}=Q_{i+1}+h_{\delta}$.
We notice that $\nu_{i+1}(Q_{\delta})=\nu_{i+1}(h_{\delta})=\mu(h_{\delta})<\beta_{i+1}=\nu_{i+1}(Q_{i+1})$.\\
As $Q_{i+1}=Q_{i}+z_{i}$ with $deg_{x}(z_{i})<deg_{x}(Q_{i})$, and $\nu_{i}(Q_{i})=\mu(z_{i})$,\\
then $Q_{\delta}=Q_{i}+(z_{i}+h_{\delta})$, therefore $\nu_{i}(Q_{\delta})=\nu_{i}(Q_{i})=\beta_{i}$ because $\nu_{i}(h_{\delta})=\beta>\beta_{i}$.\\
Hence, $Q_{\delta}=Q_{i}+z_{\delta}$, with $z_{\delta}=h_{\delta}+z_{i}$, with $deg_{x}(z_{\delta})<deg_{x}(Q_{i})$ and $$
\nu_{i}(Q_{\delta})=\nu_{i}(Q_{i})<\nu_{i+1}(Q_{\delta})=\mu(Q_{\delta}).
$$
Put $\beta_{\delta}=\mu(Q_{\delta})=\beta$, then we have
 $$
 Q_{\delta}=Q_{i}+z_{\delta}
 $$
 with $z_{\delta}$ an i-standard expansion not involving $Q_{i}$,
 $$
 \nu_{i}(Q_{\delta})=\nu_{i}(Q_{i})=\nu_{i}(z_{\delta})
 $$
 and $\beta_{\delta}>\beta_{i}$.

\end{proof}

Fix $\delta\in\Delta_{i}$ and take the polynomial $Q_{\delta}$ defined above.
Put $\nu_{\delta}=[\nu_{i};\nu_{\delta}(Q_{\delta})=\beta_{\delta}]$.

By Lemma \ref{deltai} and proposition \ref{HOSimpliesVaquie}, the polynomials $\{Q_{i}\}_{i\in\Delta_{i}}$ are Vaqui\'e key polynomials for the valuation $\nu_{i}$.

\begin{prop}\label{familyqdelta}

The family $(\nu_\delta)_{\delta\in\Delta_{i}}$ associated to the key polynomials $(Q_\delta)_{\delta\in\Delta_{i}}$ is an augmented iterated family of valuations.

\end{prop}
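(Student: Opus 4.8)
My plan is to verify directly the axioms of Definition \ref{augmentediterated}. Order $\Delta_i$ by $\theta<\delta\iff\beta_\theta<\beta_\delta$; then $\delta_0:=\Phi(\beta_i)$ is the smallest element and $\Phi(\beta_{i+1})$ the largest, and I set $Q_{\delta_0}:=Q_i$, $\nu_{\delta_0}:=\nu_i$, $Q_{\Phi(\beta_{i+1})}:=Q_{i+1}$, $\nu_{\Phi(\beta_{i+1})}:=\nu_{i+1}=[\nu_i;\nu_{i+1}(Q_{i+1})=\beta_{i+1}]$. In Definition \ref{augmentediterated} I will always take $\phi_\delta=Q_\delta$ and $\gamma_\delta=\beta_\delta$. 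The first thing to do is to collect elementary value computations. Write $d=\deg_xQ_i$. For $\theta<\delta$ in $\Delta_i$ the polynomials $Q_\theta,Q_\delta$ are monic of degree $d$, so $\deg_x(Q_\delta-Q_\theta)<d$ and $\mu(Q_\delta-Q_\theta)=\min(\beta_\theta,\beta_\delta)=\beta_\theta$. Since $\nu_\theta$ (which equals $\nu_i$ if $\theta=\delta_0$, and $[\nu_i;\nu_\theta(Q_\theta)=\beta_\theta]$ otherwise) and $\nu_\delta=[\nu_i;\nu_\delta(Q_\delta)=\beta_\delta]$ are obtained from $\nu_i$ by augmenting with key polynomials of degree $d$, they both agree with $\nu_i$ --- and hence, by Proposition \ref{prop36}, with $\mu$ --- on all polynomials of $x$-degree $<d$. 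Plugging the expansions $Q_\delta=1\cdot Q_\theta+(Q_\delta-Q_\theta)$ and $Q_\theta=1\cdot Q_\delta+(Q_\theta-Q_\delta)$ into the defining formula of an augmented valuation then gives $\nu_\theta(Q_\delta)=\nu_\delta(Q_\theta)=\beta_\theta$. Finally, writing an arbitrary $f$ as $f=\sum_jf_jQ_\theta^j$ with $\deg_xf_j<d$, the identities just recorded give $\nu_\theta(f)=\min_j\{\mu(f_j)+j\beta_\theta\}\le\nu_\delta(f)$, so $\nu_\theta\le\nu_\delta$ pointwise.

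The key step is to show that $Q_\delta$ is a Vaqui\'e key polynomial for $\nu_\theta$ for every $\theta\in\Delta_i$ with $\delta_0\le\theta<\delta$. It is monic by construction. The $\nu_\theta$-minimality and $\nu_\theta$-irreducibility follow by rerunning the argument of Proposition \ref{HOSimpliesVaquie} with $(\mu,\nu_{i_0})$ replaced by $(\nu_\delta,\nu_\theta)$: the strict inequality furnished there by Proposition \ref{cor25} is now furnished by $\nu_\delta(Q_\delta)=\beta_\delta>\beta_\theta=\nu_\theta(Q_\delta)$; the role of Proposition \ref{prop36} (degree $<\deg Q$ forces $\mu=\nu$) is played by the fact that $\nu_\theta$ and $\nu_\delta$ agree on polynomials of $x$-degree $<d$ (so that $\nu_\delta(f)>\nu_\theta(f)$ forces $\deg_xf\ge d$); and the monotonicity $\nu_\theta\le\nu_\delta$ is what makes the divisibility estimates valid. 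Granting this, one obtains at once the transitivity relation $\nu_\delta=[\nu_\theta;\nu_\delta(Q_\delta)=\beta_\delta]$: the right-hand side is a well-defined augmented valuation because $Q_\delta$ is a key polynomial for $\nu_\theta$ and $\nu_\theta(Q_\delta)=\beta_\theta<\beta_\delta$, and evaluating it on $f=\sum_jf_jQ_\delta^j$ ($\deg_xf_j<d$) yields $\min_j\{\nu_\theta(f_j)+j\beta_\delta\}=\min_j\{\nu_i(f_j)+j\beta_\delta\}=\nu_\delta(f)$.

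Given these two facts, verifying Definition \ref{augmentediterated} is routine. The smallest element is $\delta_0$. Fix $\delta\neq\delta_0$. If $\delta$ has an immediate predecessor $\delta^-$, take $\theta=\delta^-$: then $\nu_\delta=[\nu_{\delta^-};\nu_\delta(Q_\delta)=\beta_\delta]$ by the transitivity relation; if moreover $\delta^-\neq\delta_0$, then $\deg_xQ_{\delta^-}=\deg_xQ_\delta=d$, and since $\nu_{\delta^-}(Q_{\delta^-})=\nu_{\delta^-}(Q_\delta)=\nu_{\delta^-}(Q_\delta-Q_{\delta^-})=\beta_{\delta^-}$ with $Q_\delta-Q_{\delta^-}\neq0$ we get $\init_{\nu_{\delta^-}}Q_\delta=\init_{\nu_{\delta^-}}Q_{\delta^-}+\init_{\nu_{\delta^-}}(Q_\delta-Q_{\delta^-})\neq\init_{\nu_{\delta^-}}Q_{\delta^-}$, i.e. $Q_\delta$ and $Q_{\delta^-}$ are not $\nu_{\delta^-}$-equivalent. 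If $\delta$ has no immediate predecessor, take $\theta=\delta_0$: for every $\beta\in\Delta_i$ with $\delta_0<\beta<\delta$ one has $\nu_\beta=[\nu_{\delta_0};\nu_\beta(Q_\beta)=\beta_\beta]$ by construction and $\nu_\delta=[\nu_\beta;\nu_\delta(Q_\delta)=\beta_\delta]$ by the transitivity relation, with $\deg_xQ_\delta=\deg_xQ_\beta=d$. All conditions of Definition \ref{augmentediterated} are thereby met.

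The main obstacle is the middle step --- upgrading Proposition \ref{HOSimpliesVaquie} (which gives that $Q_\delta$ is a key polynomial for $\nu_i$) to the statement that $Q_\delta$ is a key polynomial for \emph{every} intermediate valuation $\nu_\theta$, $\delta_0\le\theta<\delta$. The computation is a paraphrase of the one in Proposition \ref{HOSimpliesVaquie}, but it forces one to first isolate, in the preliminary step, the right replacements for Propositions \ref{prop36} and \ref{cor25} in this situation --- the coincidence of $\nu_\theta$, $\nu_i$, $\nu_\delta$ in $x$-degree $<d$, the equalities $\nu_\theta(Q_\delta)=\nu_\delta(Q_\theta)=\beta_\theta$, and the monotonicity $\nu_\theta\le\nu_\delta$ --- and to check that these genuinely suffice to carry the divisibility arguments through.
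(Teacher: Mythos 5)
Your proposal is correct and follows essentially the same route as the paper: for $\theta<\delta$ one writes $Q_\delta=Q_\theta+(z_\delta-z_\theta)$ with the difference of degree $<d$ and $\mu$-value $\beta_\theta$, deduces that $Q_\delta$ is a key polynomial for $\nu_\theta$ with $\nu_\delta=[\nu_\theta;\nu_\delta(Q_\delta)=\beta_\delta]$, and rules out $\nu_\theta$-equivalence by the same value computation $\nu_\theta(Q_\delta-Q_\theta)=\beta_\theta=\nu_\theta(Q_\theta)=\nu_\theta(Q_\delta)$. You simply supply in detail (by rerunning the argument of Proposition \ref{HOSimpliesVaquie} with $(\mu,\nu_{i_0})$ replaced by $(\nu_\delta,\nu_\theta)$) the key-polynomial and transitivity claims that the paper states with "we can see".
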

\begin{proof}:

Take $\delta_{1}<\delta_{2}\in\Delta_{i}$, we have $Q_{\delta_{1}}=Q_{i}+z_{\delta_{1}}$, and $Q_{\delta_{2}}=Q_{i}+z_{\delta_{2}}$. Therefore $Q_{\delta_{2}}=Q_{\delta_{1}}+(z_{\delta_{2}}-z_{\delta_{1}})$, from this relation we can see that the polynomial $Q_{\delta_{2}}$ is a key polynomial for the valuation $\nu_{\delta_{1}}$ and that the valuation $\nu_{\delta_{2}}$ is the augmented valuation constructed by the valuation $\nu_{\delta_{1}}$ and the key polynomial $Q_{\delta_{2}}$.

Now as we have, $deg_{x}(Q_{\delta})=\deg_{x}(Q_{i})$, $\forall\delta\in\Delta_{i}$,
we still have to prove that $Q_{\delta_{1}}$ and $Q_{\delta_{2}}$ are not $\nu_{\delta_{1}}$-equivalent.
We have, $z_{\delta_{1}}=z_{i}+h_{\delta_{1}}$ and $z_{\delta_{2}}=z_{i}+h_{\delta_{2}}$ with $\mu(h_{\delta_{1}})=\Phi^{-1}(\delta_{1})$ and $\mu(h_{\delta_{2}})=\Phi^{-1}(\delta_{2})$, and $\Phi^{-1}(\delta_{1})<\Phi^{-1}(\delta_{2})$.
Hence $\nu_{\delta_{1}}(Q_{\delta_{2}}-Q_{\delta_{1}})=\nu_{\delta_{1}}(z_{\delta_{2}}-z_{\delta_{1}})=
\nu_{\delta_{1}}(h_{\delta_{2}}-h_{\delta_{1}})=\mu(h_{\delta_{2}}-h_{\delta_{1}})=\Phi^{-1}(\delta_{1})=
\nu_{\delta_{1}}(Q_{\delta_{1}})=\nu_{\delta_{1}}(Q_{\delta_{2}})$.

This completes the proof.

\vspace{-1.0cm}\[\qedhere\]
\end{proof}

The result of Proposition \ref{familyqdelta} is  is closely related to the result of Proposition \ref{exhaustion} (due to Vaqui\'e). Although the latter is sufficient for the purposes of this paper, we have kept Proposition \ref{familyqdelta} because we feel that it clarifies the nature of $Exh(F)$ and the relation between Vaqui\'e and HOS key polynomials.\\

In fact, by Proposition \ref{exhaustion} we can extend any continued family $F$ of augmented iterated valuations to an exhaustive family $Exh(F)$.

We note, using Proposition \ref{HOSimpliesVaquie}, that the polynomial $Q_{l+\omega}$ defined in the seventh part of \cite{HOS} is a Vaqui\'e limit key polynomial for the family $\{Q_{l+t}\}_{t\in\N_{0}}$.

\begin{thm}\label{admitted} The family $F=\{\nu_{i}\}_{i\in\Lambda}$ constructed in \cite{HOS} can be extended to an admitted family $Exh(F)$ for the valuation $\mu$.
\end{thm}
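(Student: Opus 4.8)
The plan is to build the admitted family $Exh(F)$ by processing the HOS family $F=\{\nu_i\}_{i\in\Lambda}$ ordinal by ordinal, interpolating continued subfamilies wherever the HOS construction produces a sequence of key polynomials of the same degree, and then invoking Vaqui\'e's exhaustion machinery (Proposition \ref{exhaustive} and Corollary \ref{exhaustion}) to fill in the gaps. First I would organize $\Lambda$ according to the jumps in $\deg_x Q_i$: whenever $\deg_x Q_i<\deg_x Q_{i+1}$ we start a new simple admissible family, and the maximal stretches on which the degree is constant become the ``tails'' $A^{(t)}$ of non-discrete simple families. On each such stretch, Corollary \ref{eliminatingsamedegree} shows that all the $Q_{i+t}$ (with $\deg_x Q_{i+t}=\deg_x Q_i$) are Vaqui\'e key polynomials for $\nu_{i_0}$ and are $\nu_{i_0}$-equivalent to $Q_i$; Lemma \ref{deltai} and Proposition \ref{familyqdelta} then supply, for every $\beta\in\,]\beta_i;\beta_{i+1}[\,\cap\,\nu_{i+1}(K[x])$, an interpolating key polynomial $Q_\delta$ with $\nu_\delta=[\nu_i;\nu_\delta(Q_\delta)=\beta_\delta]$, and the family $(\nu_\delta)_\delta$ is an augmented iterated family. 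Applying Corollary \ref{exhaustion} to each such continued subfamily produces an exhaustive continued family, which becomes the $A^{(t)}$-part of a non-discrete simple admissible family $S^{(t)}$.

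Next I would verify that the simple families $S^{(t)}$ assembled this way fit together into an admissible family in the sense of the relevant definition. The key points: $\mu_1^{(1)}$ must be an augmented valuation of $\nu$ built from a degree-one key polynomial — this is $\nu_1$ with $Q_1=x$, using $\beta_1=\mu(x)$; for $t\ge2$, the first valuation $\mu_1^{(t)}$ must be the limit augmented valuation of the previous tail $\{\mu_\alpha^{(t-1)}\}_{\alpha\in A^{(t-1)}}$. Here I would use the remark already made in the excerpt that the HOS limit key polynomial $Q_{l+\omega}$ (of the form (\ref{standardform2})) is a Vaqui\'e limit key polynomial for $\{Q_{l+t}\}_{t\in\N_0}$, together with Proposition \ref{samelimitaugmentedvaluation}, to identify $\nu_{l+\omega}$ as the limit augmented valuation $[(\nu_\delta)_{\delta\in A^{(t-1)}};\nu_{l+\omega}(Q_{l+\omega})=\beta_{l+\omega}]$; the degree jump $\deg_x Q_{l+\omega}>\deg_x Q_{l+t}$ is exactly what makes $S^{(t)}$ start a genuinely new simple family. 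The countability of $\{S^{(t)}\}$ follows from the bound on the order type of $\Lambda$ ($\le\mathbb N$ or $\le\omega\times\omega$) recalled in \S\ref{HOSkey}.

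Finally I would check the two conditions in Definition \ref{def_admitted} that make $Exh(F)$ \emph{admitted} for $\mu$. The inequality $\mu_j(f)\le\mu(f)$ for all $j$ is the standard inequality $\nu_i(f)\le\mu(f)$ for the HOS truncations, extended to the interpolating valuations $\nu_\delta$ by noting $\nu_\delta=[\nu_i;\nu_\delta(Q_\delta)=\beta_\delta]$ with $\beta_\delta=\mu(Q_\delta)$, so $\nu_\delta\le\nu_{i+1}\le\mu$; and the equality $\mu_j(f)=\mu(f)$ for $\deg_x f<\deg_x\phi_j$ is Proposition \ref{prop36} (for the $\nu_i$) and Lemma \ref{lemme1.4}/Proposition \ref{prop36} again for the $\nu_\delta$ (since $\deg_x Q_\delta=\deg_x Q_i$). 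For the second, set-theoretic condition, I must show that for each tail $A^{(t)}$ and each $\theta\in A^{(t)}$,
$$
\{\mu_\alpha(\phi_\alpha)\mid\alpha\in A^{(t)},\alpha>\theta\}=\{\mu(\phi)\mid\phi\text{ monic},\deg\phi=\deg\phi_\theta,\ \mu_\theta(\phi)<\mu(\phi)\};
$$
the inclusion $\subseteq$ is immediate from $\mu_\alpha(\phi_\alpha)=\beta_\alpha=\mu(Q_\alpha)$ and $\nu_\theta(Q_\alpha)<\mu(Q_\alpha)$, while $\supseteq$ is precisely the content of part (2) of Proposition \ref{exhaustive} combined with Lemma \ref{deltai}: any such $\phi$ has $\mu(\phi)\in\,]\mu_\theta(\phi_\theta);\beta_{i+1}]\subseteq\Lambda_\bullet$, hence equals some $\beta_\delta=\mu_\delta(Q_\delta)$. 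The main obstacle I anticipate is this last verification — matching the HOS-side value set $\{\beta_\alpha\}$ with the intrinsic set of values $\{\mu(\phi)\}$ of all same-degree monic polynomials — because it requires knowing that Lemma \ref{deltai} produces an interpolating $Q_\delta$ for \emph{every} admissible value, not just a cofinal family, and that after applying $Exh$ no values are missing; one must also take care that the exhaustion process does not overshoot past $\beta_{i+1}$, i.e. that $\gamma_\alpha\le\beta_{i+1}$ for all $\alpha\in A^{(t)}$, which follows because $Q_{i+1}$ itself realizes the supremum value $\beta_{i+1}$ and stabilizes the relevant $\mu_\beta(\phi)$.
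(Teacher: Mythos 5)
There is a genuine structural gap in your decomposition of $\Lambda$. You propose to start a new simple admissible family at every degree jump ($\deg_x Q_i<\deg_x Q_{i+1}$) and to turn every maximal degree-constant stretch into a tail $A^{(t)}$. Both moves conflict with Vaqui\'e's definitions. First, in an admissible family every simple family except possibly the last must be non-discrete, and for $t\ge 2$ the first valuation $\mu_1^{(t)}$ must be a \emph{limit} augmented valuation of the previous tail; degree jumps occur inside the discrete parts $B^{(t)}$ (whenever $\alpha_{i+1}>1$), so cutting at each jump would create many discrete simple families whose first valuations are ordinary augmented valuations, which is not allowed (your own second paragraph, where new families start only at $Q_{l+\omega}$, contradicts this rule). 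Second, and more seriously, a \emph{finite} same-degree stretch $Q_i,\dots,Q_{i+t_0}$ (with $\deg_x Q_{i+t_0+1}>\deg_x Q_{i+t_0}$) cannot become a tail: a continued family (Definition \ref{continued}) requires an infinite value set with no maximal element, whereas here the relevant value set has the maximum $\beta_{i+t_0}$. Indeed, for any monic $\phi$ with $\deg_x\phi=\deg_x Q_i$ one has, by Proposition \ref{prop36} applied at level $i+t_0$, $\mu(\phi)=\nu_{i+t_0}(\phi)\le\beta_{i+t_0}$, and the bound is attained by $\phi=Q_{i+t_0}$; so the right-hand set in the second condition of Definition \ref{def_admitted} has a maximum, and no interpolated continued family (whose values lie in the open interval, or which would otherwise contain its own maximum) can equal it. The correct treatment, which is the paper's, is to \emph{compress} each finite stretch using Corollary \ref{eliminatingsamedegree}, keeping only the last key polynomial of each degree as an element of the discrete part $B^{(t)}$, and to reserve the interpolation/exhaustion machinery (Lemma \ref{deltai}, Proposition \ref{familyqdelta}, Corollary \ref{exhaustion}) for the genuinely infinite stretches where $\alpha_{l+t}=1$ for all $t\in\N_0$; only these become the tails $A^{(t)}$, and only after them does a new simple family begin, headed by the limit augmented valuation attached to $Q_{l+\omega}$.

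You also omit the case in which $\alpha_i>1$ for infinitely many $i$, so that no tail ever forms: there the paper shows, using Proposition \ref{prop37} and the fact that $\delta_i(h)$ cannot decrease strictly infinitely often, that the single (discrete) simple family already converges to $\mu$, hence is admitted. This case is easier, but it must be separated out, since your stretch-by-stretch interpolation scheme has nothing to exhaust there. The remaining ingredients of your plan (Proposition \ref{prop36} for the first condition of Definition \ref{def_admitted}, the identification of $\nu_{l+\omega}$ as the limit augmented valuation, the use of $Exh$ for the second condition) are in the right spirit and close to the paper's argument, but they only become applicable after the decomposition is repaired as above.
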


\begin{proof}
We will proceed by the order of construction of the $Q_{i}$.\\
\\
We take $Q_{1^{(1)}}=x$ and $\nu_{1^{(1)}}$. If $\nu_{1^{(1)}}(h)$=$\mu(h)$ $\forall h\in K[x] $ we have finished, we take $Exh(F)=\left\{\nu_{1^{(1)}}\right\}$.

If not, consider the polynomial $Q_{2}$. If there exists an integer $t_0$ such that $deg_{x}(Q_{2+t})=deg_{x}Q_{2}$ for all $t\leq t_{0}$, $Q_{2+t_{0}+1}$ is defined and $deg_{x}(Q_{2+t_{0}+1})>deg_{x}Q_{2}$, then by Corollary \ref{eliminatingsamedegree} the polynomial $Q_{2+t_0}$ is a Vaqui\'e key polynomial for the valuation $\nu_{1^{(1)}}$. We put $Q_{2^{(1)}}=Q_{2+t_0}$ and $\nu_{2^{(1)}}=[\nu_{1^{(1)}};
\nu_{2^{(1)}}(Q_{2^{(1)}})=\mu(Q_{2^{(1)}})]$. We use the same procedure to construct the valuations $\nu_{3^{(1)}},\ \nu_{4^{(1)}}$,...\\

If we have $\alpha_{i}>1$ for infinitely many values of $i$, we set $Exh(F)=\left\{\nu_{i}^{(1)}\right\}_{i\in I^{(1)}}$, with $I=I^{(1)}=\left\{1, ..., n,...\right\}$. Take any element $h\in K[x]$. From Proposition \ref{prop37} we have
\begin{equation}
\delta_{i+1}(h)<\delta_{i}(h)\quad\text{ for }i,i+1\in I^{(1)}.\label{eq:strictineq}
\end{equation}
As the set $I^{(1)}$ is infinite and the strict inequality (\ref{eq:strictineq}) cannot occur infinitely many times, we have $\delta_{i}(h)=0$ for some $i$. Then $in_{i}(h)$ does not involve $\overline{Q}_{i}$, hence $\nu_{i}(h)=\mu(h)$ and we have finished.\\
If not, i.e. if there exists a certain $l$ such that $\alpha_{l}=\alpha_{l+1}=\alpha_{l+2}=....=1$, we set
$$
I^{(1)}=B^{(1)}\bigcup Exh\left(A^{(1)}\right)
$$
with $B^{(1)}=\left\{1^{(1)},...,l^{(1)}\right\}$ and $A^{(1)}=\left\{l^{(1)}+1,l^{(1)}+2,.... \right\}$ where $l^{(1)}$ is the minimal $l$ satisfying $\alpha_{l+t}=1$ for all $t\in\mathbb{N}_0$. \\

If $\forall h$ in $K[x]$, there exists $i\in A^{(1)}$ such as $\nu_{i}(h)=\mu(h)$ we have finished.\\
If not, we know the existence of a limit key polynomial $Q_{l+\omega}$ and a valuation limit $\nu_{l+\omega}$, which satisfies $\nu_{l+\omega}(f)\leq \mu(f)$ for all $f\in K[x]$. We denote: $\nu_{l+\omega}=\nu_{1}^{(2)}$ and we repeat the procedure.\\
In this way, we construct recursively an admissible family of augmented iterated valuations which is admitted for the valuation $\mu$.
\end{proof}

Conversely, given an admissible family of valuations $F$ of $K[x]$ which is admitted for the valuation $\mu$, we want to see how to obtain from the family of Vaqui\'e key polynomials associated to $F$, a family of HOS key polynomials.

We will first prove an analogue of Lemma \ref{lemme1.4} when $Q_{i}$ is a limit key polynomial.

\begin{lem}\label{lemme_limit}
Let $C=\{\mu_{\alpha}\}_{\alpha\in A}$ be a continued family of augmented iterated valuations, and $\{\phi_{\alpha}\}_{\alpha\in A}$ the set of the key polynomials associated to $C$.\\
 Let $\mu$ be the valuation defined by the family $C$, a limit key polynomial $\phi$ and a value $\gamma=\mu(\phi)$.
Then for all $f$ in $K[x]$ for which there exists $\alpha_{0}\in A$ such that for all $\alpha\geq\alpha_{0}$ $\mu_{\alpha}(f)=\mu(f)$,
 we have:
\begin{enumerate}
\item there exists $h$ in $K[x]$ with $\deg h<\deg\phi$ such that $in_{\mu}f=in_{\mu}h$.
\item there exists $g$ in $K[x]$ with $\deg g<\deg\phi$ such that $in_{\mu}fg=in_{\mu}1$.
\end{enumerate}

\end{lem}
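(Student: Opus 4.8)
\textbf{Proof plan for Lemma \ref{lemme_limit}.}

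The strategy is to reduce the limit-key-polynomial statement to the augmented-valuation case already handled by Lemma \ref{lemme1.4}. The hypothesis on $f$ says that $\mu_\alpha(f)$ stabilizes for $\alpha\ge\alpha_0$, so $\mu_A(f)=\mu(f)=\mu_{\alpha_0}(f)$. The first observation I would make is that, since $\phi$ is a limit key polynomial and $\mu=[(\mu_\alpha)_{\alpha\in A};\mu(\phi)=\gamma]$, the value $\mu(f)$ equals $\mu_{\alpha_0}(f)$, and the $\phi$-adic expansion of $f$ is $f=f_m\phi^m+\dots+f_0$ with $\deg_x f_j<\deg_x\phi$. My claim is that under the stabilization hypothesis the term $f_0$ already realizes $\mu(f)$, i.e. $\mu(f)=\mu_A(f_0)$ and $\mu(f)<\mu_A(f_j)+j\gamma$ for $j\ge1$; granting this, $h:=f_0$ has $\deg h<\deg\phi$ and $in_\mu f=in_\mu h$, proving part (1). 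To see the claim, recall that $\gamma>\mu_\alpha(\phi)$ for all $\alpha$, so $j\gamma$ is strictly larger than $j\mu_\alpha(\phi)$; if some $j\ge1$ contributed strictly to the minimum defining $\mu'(f)$, then increasing $\alpha$ would strictly increase $\mu_\alpha(f_j\phi^j)$ past the value contributed by $f_0$, contradicting the stabilization of $\mu_\alpha(f)$ at $\alpha=\alpha_0$. (More precisely: if $\mu(f)=\mu_A(f_j)+j\gamma$ for some $j\ge 1$, then since $\mu_A(f_j)=\mu_{\alpha_0}(f_j)$ is fixed while $\mu_\alpha(\phi)\nearrow$ and $\gamma>\mu_\alpha(\phi)$, the quantity $\min_k\{\mu_{\alpha}(f_k)+k\mu_\alpha(\phi)\}$ would strictly exceed $\mu_{\alpha_0}(f)$ for $\alpha$ large, so $\mu_\alpha(f)>\mu_{\alpha_0}(f)$, contradiction.)

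For part (2), I would argue exactly as in the proof of Lemme 1.4 in \cite{V}: by part (1) we may assume $\deg f<\deg\phi$, so that $\mu(f)=\mu_A(f)=\mu_{\alpha_0}(f)$ with $\mu_{\alpha_0}$ an augmented iterated valuation. Since $\phi$ is a limit key polynomial for the family, it is $A$-minimal, so $\phi$ does not $A$-divide $f$ (as $\deg f<\deg\phi$), and in particular $in_\mu f$ is not divisible by $in_\mu\phi$ in $G_\mu$. One then uses the structure of $G_{\mu_{\alpha_0}}$ and the fact that $in_{\mu_{\alpha_0}}f$ is a unit in the relevant localized graded ring (the graded ring associated to a key-polynomial-free element of degree $<\deg\phi$ is a homogeneous element which, by minimality of $\phi$ and the description of $G_{\mu_{\alpha_0}}$ as a graded algebra over $G_\nu$, admits a homogeneous inverse up to an element of $G_\nu$) to produce $g$ with $\deg g<\deg\phi$ and $in_\mu(fg)=in_\mu 1$. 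Concretely I would invoke Lemma \ref{lemme1.4}(2) applied to the augmented valuation $\mu_{\alpha_0}$ together with Proposition \ref{prop36}-type degree control to ensure the $g$ produced can be taken of degree $<\deg\phi$, then check that $in_{\mu_{\alpha_0}}(fg)=in_{\mu_{\alpha_0}}1$ persists as $in_\mu(fg)=in_\mu1$ because all values involved are $<\gamma$ and hence unaffected by passing from $\mu_{\alpha_0}$ to the limit augmented valuation $\mu$.

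The main obstacle I anticipate is the bookkeeping in part (1): one must be careful that ``$\mu_\alpha(f)$ constant for $\alpha\ge\alpha_0$'' genuinely forces $f_0$ (the $\phi$-free term) to carry the value, rather than merely some $f_j$ with $j$ bounded. The subtlety is that $\mu_A(f_j)$ is well defined only because $\deg f_j<\deg\phi$ (so $\mu_\alpha(f_j)$ itself stabilizes), and one needs $\gamma>\sup_\alpha\mu_\alpha(\phi)$ — which is exactly the defining hypothesis of the limit augmented valuation — to rule out cancellation among the higher-degree terms in the $\phi$-expansion. A secondary technical point is confirming that the graded-ring inverse $g$ in part (2) can be chosen with $\deg_x g<\deg_x\phi$; this should follow from applying part (1) to $g$ itself (replacing any candidate $g$ by its $\phi$-free term), so the two parts are mildly intertwined but not circular.
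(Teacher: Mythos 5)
Your target in part (1) is the right one (the paper also shows that $f$ is $\mu$-equivalent to its remainder $r=f_0$ under Euclidean division by $\phi$), but the justification you give is broken, and at exactly the point where the real work lies. If the minimum defining $\mu(f)=\min_k\{\mu_A(f_k)+k\gamma\}$ were attained at some $j\ge 1$, then for large $\alpha$ the quantity $\min_k\{\mu_\alpha(f_k)+k\mu_\alpha(\phi)\}$ is bounded above by $\mu_A(f_j)+j\mu_\alpha(\phi)<\mu_A(f_j)+j\gamma=\mu(f)=\mu_{\alpha_0}(f)$, i.e.\ it is strictly \emph{smaller} than $\mu_{\alpha_0}(f)$, not larger as you assert; likewise $\mu_\alpha(f_j\phi^j)$ never climbs ``past the value contributed by $f_0$'', since it stays below $\mu_A(f_j)+j\gamma\le\mu_A(f_0)$. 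So the contradiction you announce does not follow. The correct contradiction is the opposite one: $\mu_\alpha(f)$ should be strictly \emph{below} $\mu(f)$ for large $\alpha$, and to get that you must rule out cancellation, because a priori one only has $\mu_\alpha(f)\ge\min_k\{\mu_\alpha(f_k\phi^k)\}$. This is precisely where the paper uses that $\phi$ is a limit key polynomial: writing $f=q\phi+r$, the hypothesis $\mu_\beta(q\phi)<\mu_\beta(f)$ (i.e.\ cancellation between $q\phi$ and $r$) forces $\init_{\mu_\alpha}r=\init_{\mu_\alpha}(q\phi)$ for all $\alpha\ge\beta$, so $\phi$ $A$-divides $r$ with $\deg r<\deg\phi$, contradicting $A$-minimality; hence $\mu_\alpha(q\phi)\ge\mu_\alpha(f)$ for large $\alpha$ and then $\mu(q\phi)>\mu_\alpha(q\phi)\ge\mu(f)$ gives $\init_\mu f=\init_\mu r$. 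Your sketch never invokes the $A$-minimality of $\phi$ in part (1), which is a symptom of the gap. (Your claim can alternatively be repaired within your own setup: since $\mu_\alpha(\phi)$ is strictly increasing, ties among the finitely many affine functions $\mu_A(f_k)+k\mu_\alpha(\phi)$ occur for only finitely many $\alpha$, so for $\alpha$ large the minimum is attained once and $\mu_\alpha(f)$ equals it; but this argument has to be made, and it is not the one you wrote.)

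For part (2) you take a genuinely different route from the paper. The paper argues directly in $K[x]$: $\phi$ is irreducible and does not divide $f$, so a B\'ezout identity $fg+h\phi=1$ with $\deg g<\deg\phi$ yields $\init_\mu(fg)=\init_\mu 1$. Your plan---reduce to $\deg f<\deg\phi$ via part (1), invert $\init_{\mu_\alpha}f$ using Lemma \ref{lemme1.4}(2) for a member $\mu_\alpha$ of the family, control $\deg g$ by Lemma \ref{lemme1.4}(1), and pass to the limit---can be made to work, but as written it is vague at two points: you must say with respect to which base valuation $\mu_\alpha$ is viewed as an augmented valuation when you apply Lemma \ref{lemme1.4} (one needs $\alpha$ large enough that the relevant values of $f$ and $g$ have already stabilized), and the transfer $\init_{\mu_\alpha}(fg)=\init_{\mu_\alpha}1\Rightarrow\init_\mu(fg)=\init_\mu 1$ is justified by $\mu\ge\mu_\alpha$ applied to $fg-1$ (which may have degree $\ge\deg\phi$), not by the assertion that ``all values involved are $<\gamma$''.
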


\begin{proof}
\begin{enumerate}
\item Let $f=q\phi+r$ be the Euclidean division of $f$ by $\phi$.\\
As $\deg_{x}r<\deg_{x}\phi$, there exists $\alpha_{1}\in A$ such that for all $\alpha\geq\alpha_{1}$ in $A$, we have $\mu_{\alpha}(r)=\mu(r)$.\\
Take $\alpha_{2}=max\left\{\alpha_{0}, \alpha_{1}\right\}$
 then for all $\alpha\geq\alpha_{2}$, we have $\mu_{\alpha}(q\phi)\geq\mu_{\alpha}(f)$. \\
Indeed, suppose that there exists $\beta\in A$, $\beta\geq\alpha_{2}$ and $\mu_{\beta}(q\phi)<\mu_{\beta}(f)$. Then $\mu_{\beta}(r)=\mu_{\beta}(q\phi)$
and for all $\alpha\geq\beta\geq\alpha_{2}$ we have $\mu_{\alpha}(f)\geq\mu_{\beta}(f)>\mu_{\beta}(r)=\mu_{\alpha}(r)$.
 Hence for all $\alpha\geq\beta$ we have $in_{\mu_{\alpha}}r=in_{\mu_{\alpha}}q\phi$ and $\phi$ $A$-divides $r$, which contradicts the fact that $\deg_{x}r<\deg_{x}\phi$ because $\phi$ is $\mu_{A}$-minimal.\\
Hence, for all $\alpha\geq\alpha_{2}$ we have $\mu(q\phi)>\mu_{\alpha}(q\phi)\geq\mu_{\alpha}(f)=\mu(f)$, therefore $f$ is $\mu$-equivalent to $r$.

\item As the limit key polynomial $\phi$ is an irreducible polynomial of $K[x]$ and $\phi$ does not divide $f$, therefore there exist two polynomials $g$ and $h$ of $K[x]$, with $deg_{x}g<deg_{x}\phi$, such that $fg+h\phi=1$, hence $in_{\mu}fg=in_{\mu}1$.
\end{enumerate}

\end{proof}

Let $F=\{\mu_{i}\}_{i\in I}$ be an admissible family of valuations of $K[x]$ which is admitted for $\mu$, and let $\left\{Q_{i}\right\}_{i\in I}$ be the family of key polynomials in the sense of Vaqui\'e associated to $F$. For all $i\in I$ put $\beta_{i}=\mu_{i}(Q_{i})$.\\

Write $F=\bigcup\limits_{t}S^{(t)}=\bigcup\limits_{t}\left\{\mu_{i}^{(t)}\right\}_{i\in I^{(t)}}$, with $1\leq t<N$ where $N \in\N\bigcup \left\{+\infty\right\}$, and
$$
I^{(t)}=\left\{ 1^{(t)},...,n^{(t)}\right\}\bigcup A^{(t)}.
$$
\begin{thm}\label{VaquietoHOS} There exist well ordered sets $I'$ and
$J$, $I'\subset I$, $I'\subset J$, and a polynomial $Q'_i$ for each
$i\in J$, having the following properties:
\begin{enumerate}
\item $I'$ is cofinal in both $I$ and $J$.
\item $1^{(t)}\in I'$ for all $t$, $1\le t<N$.
\item the set $(Q'_{i})_{i\in J}$ is a $J$-set of HOS key polynomials.
\item for any two consecutive elements  $i_0,i_1\in I'$, there is at most one $j\in J$ such that
\begin{equation}
i_0<j<i_1.\label{eq:jbetween}
\end{equation}
If there exists $j\in J$ satisfying (\ref{eq:jbetween}) then $\deg\
Q'_j=\deg\ Q_{i_1}$.
\item For every $i\in I'$ there exists $i_0\in I'$ with $i=i_0+$ such that the key polynomial $Q'_{i}$ satisfies
$\mu_{i_0}(Q_{i}-Q'_{i})>\mu(Q_{i})=\mu(Q'_{i})$.
\item If the family $F$ converges to $\mu$ then the set
$(Q'_{i})_{i\in J}$ of HOS key polynomials is complete.
\end{enumerate}
\end{thm}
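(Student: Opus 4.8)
The plan is to construct $I'$, $J$ and the polynomials $Q'_i$ by transfinite recursion along $I$, processing one simple admissible subfamily $S^{(t)}$ at a time, and within each $S^{(t)}$ first the discrete part $\{1^{(t)},\dots,n^{(t)}\}$ and then the exhaustive part $A^{(t)}$. The guiding principle is that the Vaqui\'e key polynomials $Q_i$ already satisfy nearly all the HOS axioms (by Proposition \ref{HOSimpliesVaquie} read in reverse spirit, and by the structural Propositions \ref{prop21}, \ref{cor25}, \ref{prop36}, \ref{prop37}), so the only work is (a) to thin out those $i\in A^{(t)}$ where $\deg Q_i$ does not jump, keeping a cofinal well-ordered subset $I'$ indexed so that successive members differ in degree except for at most one intermediate index, and (b) to replace each retained $Q_i$ by a polynomial $Q'_i$ that is an honest $i_0$-standard expansion of the form (\ref{standardform}) or (\ref{standardform2}), obtained by subtracting off the standard monomials of value $>\mu(Q_i)$ that appear when one writes $Q_i$ in terms of the previously constructed $Q'_j$.

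Concretely, I would proceed as follows. First, set $1^{(t)}\in I'$ for every $t$ and put $Q'_{1^{(1)}}=x=Q_{1^{(1)}}$; since $rk\ \nu=1$ and $\mu(x)\in\Gamma'_1$, this matches the HOS base case. For the discrete indices $i\in\{2^{(t)},\dots,n^{(t)}\}$, where $\deg Q_i>\deg Q_{i-1}$, put $i\in I'$, let $i_0=i-1$, and define $Q'_i$ by taking the $(i-1)$-standard expansion of $Q_i$ with respect to $\{Q'_j\}_{j<i}$ (possible because the $Q'_j$ form an $(i-1)$-set of key polynomials with the same degrees as the $Q_j$), writing $Q_i=Q'^{\,\alpha_i}_{i-1}+(\text{lower terms})+(\text{terms of }\mu\text{-value}>\mu(Q_i))$, and discarding the last group; the fact that the truncation $\nu_{i-1}$ associated to the $Q'_j$ coincides with $\mu_{i-1}$ on polynomials of degree $<\deg Q'_{i-1}$ (the admitted-family condition in Definition \ref{def_admitted}) guarantees that (\ref{standardformini}) holds, i.e.\ that $\init_\mu Q'_{i-1}$ satisfies the required minimal algebraic relation, and monic-ness and the value-constancy condition 2 are arranged by the subtraction. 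For the exhaustive part $A^{(t)}$, use Proposition \ref{admissible_order} and Remark \ref{minimaldegreelimit}: among the $\mu_\alpha(\phi_\alpha)=\gamma_\alpha$ there are only finitely many (or an $\omega$-sequence, in characteristic $p$) values of $\delta_\alpha$ that can strictly decrease, so a cofinal well-ordered subsequence of $A^{(t)}$ of order type $\le\omega$ records all the "degree information"; retain those indices in $I'$, and between two consecutive retained indices $i_0<i_1$ insert at most one $j\in J$ carrying the intermediate polynomial of degree $\deg Q_{i_1}$ exactly when the exhaustive interval contributes such a polynomial — this is where clause 4 is forced. When $A^{(t)}\ne\emptyset$ is the last nondiscrete family before the limit augmented valuation $\mu_{1^{(t+1)}}$, its limit key polynomial $\phi$ (which by Remark \ref{minimaldegreelimit} has minimal degree and by Lemma \ref{lemme_limit} admits the Artin--Schreier type expansion) becomes $Q'_{1^{(t+1)}}$ after subtracting higher-value standard monomials, yielding the weakly affine form (\ref{standardform2}) with the $\bar\beta_i$-conditions coming from the exhaustiveness of $A^{(t)}$ and Proposition \ref{cor25}.

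Clauses 1, 2, 4 are then immediate from the construction; clause 5 is exactly the statement that the subtraction leaves $\mu_{i_0}(Q_i-Q'_i)>\mu(Q_i)=\mu(Q'_i)$, which holds since we only removed monomials of strictly larger $\mu$-value and $\mu_{i_0}$ agrees with $\mu$ on all of them by Proposition \ref{prop36}; clause 3 requires checking the full list of HOS axioms (properties 1--3 after (\ref{standardform}), and the $\bar\beta_i$-inequalities after (\ref{standardform2})), which is the bookkeeping heart of the proof but reduces, term by term, to the structural propositions already cited. Finally, clause 6: if $F$ converges to $\mu$ then for every $f$ some $\mu_i(f)=\mu(f)$, and since $\mu(Q_i)=\mu(Q'_i)$ and the $\init_\mu Q_i$ generate $G_\mu$ over $G_\nu$ (this is the content of convergence together with the definition of the $\nu_i$), the $\init_\mu Q'_i$ do as well, giving completeness in the sense of Definition \ref{complete}. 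The main obstacle I anticipate is the exhaustive-part bookkeeping in clause 4 together with verifying that the at-most-one intermediate index $j$ genuinely suffices: one must show that passing from $\mu_{i_0}$ to $\mu_{i_1}$ across an exhaustive interval, after our thinning, is realized by a single augmentation step (possibly through one auxiliary key polynomial of the shared degree), using Proposition \ref{sameaugmentedvaluation} to identify when two candidate key polynomials give the same augmented valuation and hence may be collapsed.
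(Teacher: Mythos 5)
Your overall strategy (thin the non-well-ordered parts to cofinal $\omega$-sequences, keep the discrete indices, and convert each retained Vaqui\'e polynomial into HOS form by subtracting standard monomials of non-minimal value) is the same in spirit as the paper's, which organizes the recursion via Zorn's lemma on ``partial collections'' rather than a direct transfinite recursion; that organizational difference is harmless. The problem is your justification of clause 5, which papers over exactly the difficulty the paper spends most of its proof on. To put $Q_i$ into the form (\ref{standardform}) you must discard \emph{all} monomials whose value exceeds the minimal value $\alpha_i\beta_{i_0}=\mu_{i_0}(Q_i)$ (otherwise the value-constancy condition 2 and the minimality of the relation (\ref{standardformini}) fail); your stated threshold ``$\mu$-value $>\mu(Q_i)$'' is the wrong one, since $\mu(Q_i)=\beta_i>\alpha_i\beta_{i_0}$ by Proposition \ref{cor25}. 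With the correct threshold, the discarded part $h=Q_i-P_i$ only satisfies $\mu_{i_0}(h)=\mu(h)>\alpha_i\beta_{i_0}$ (Proposition \ref{prop36}), and it is \emph{not} automatic that $\mu(h)\ge\mu(P_i)$, let alone $>\mu(Q_i)$, nor that $\mu(P_i)=\mu(Q_i)$. The paper isolates this in Proposition \ref{muprimeiequalmui} and then must handle the residual cases: when $\mu(h)=\beta_i'$ with $\beta_i>\beta_i'$ one cannot take $Q_i'=P_i$ at all and instead inserts \emph{two} new indices, one carrying $P_i$ and one carrying $Q_i$ itself (this is precisely the origin of the ``at most one $j$'' in clause 4, via Proposition \ref{sameaugmentedvaluation}); and when $i=n^{(t)}$ the polynomial $P_i$ may fail condition (\ref{restrictioncondition}) altogether, with $\mu(P_i)>\mu(Q_i)$ or $\mu(P_i)<\mu(Q_i)$, and one must invoke exhaustiveness of $A^{(t)}$ to replace $P_i$ by a suitable $Q_\alpha$ of the same value, or again insert two indices. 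You flag this as an ``anticipated obstacle'' but offer no argument; as written, clause 5 (and with it clause 4) is unproved.

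A second, smaller gap is the limit case: subtracting higher-value monomials from the Vaqui\'e limit key polynomial does not by itself produce the weakly affine form (\ref{standardform2}). One needs the fact that the minimal degree of a polynomial $f$ with $\mu_{i'}(f)<\mu(f)$ for all $i'$ is a power $p^{e_0}$ of the residue characteristic and that the coefficients at non-$p$-power exponents can be dropped consistently; the paper gets this from the proof of Proposition 55 of \cite{HOS} combined with Theorem 3.5 of \cite{V0} (which gives the expansion $Q_i=Q_{i'}^m+\sum_{j<m}d_{ji'}Q_{i'}^j$ with $\mu_{i'}(Q_i)=m\beta_{i'}$ for $i'$ large), together with the first condition of Definition \ref{def_admitted} to identify $m$ with that minimal degree. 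Your sketch cites neither ingredient, so the weakly affine structure and the accompanying $\bar\beta_i$-inequalities are asserted rather than derived.
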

\begin{proof} We start the proof of Theorem \ref{VaquietoHOS} with an auxiliary Proposition which gives explicit formulae expressing each Vaqui\'e key polynomial $Q_i$ appearing in $F$ in terms of polynomials $Q_{i'}$ with $i'<i$.

\begin{prop}\label{VaquieimpliesHOS}
For every $i$ in $I$ there exists an $i_0\in I$, $i_{0}<i$ such that the polynomial $Q_i$ can be written as:
\begin{equation}
Q_{i}=Q_{i_0}^{\alpha_{i}} + \sum\limits_{j=0}^{\alpha_{i}-1}\left(\sum\limits_{\gamma_{i_0}}c_{ji\gamma_{i_0}}\textbf{Q}_{i_0}^{\gamma_{i_0}}
\right)Q_{i_0}^j\label{eq:standardform}
\end{equation}
where:
\begin{enumerate}
\item Each $c_{ji\gamma_{i_0}}\textbf{Q}_{i_0}^{\gamma_{i_0}}$ is an $i_{0}$-standard monomial not involving $Q_{i_0}$.
	\item We have $j\beta_{i_0}+ \mu\left(c_{ji\gamma_{i_0}}\textbf{Q}_{i_0}^{\gamma_{i_0}}\right)\ge\alpha_i\beta_{i_0}$ for all the monomials $\left(c_{ji\gamma_{i_0}}\textbf{Q}_{i_0}^{\gamma_{i_0}}\right)Q_{i_0}^j$ appearing in (\ref{eq:standardform}).
	\item We have $\beta_i>\alpha_i\beta_{i_0}$.
	\item $Q_{i}$ is a polynomial of minimal degree among those satisfying $\nu_{i_{0}}(Q_{i})<\mu(Q_{i})$.
\end{enumerate}
\end{prop}

\begin{proof}
Write $F=\bigcup\limits_{t}S^{(t)}=\bigcup\limits_{t}\left\{\mu_{i}^{(t)}\right\}_{i\in I^{(t)}}$, with $1\leq t<N$ where $N \in\N\bigcup \left\{+\infty\right\}$, and
$$
I^{(t)}=\left\{ 1^{(t)},...,n^{(t)}\right\}\bigcup A^{(t)}.
$$
We know that the first valuation, $\nu_{1}^{1}$ is constructed in the same way by M.Vaqui\'e and by HOS, with the key polynomial $Q_{1}^{1}=x$ or $Q_{1}^{1}=x-a$ with $a\in K$.

Now we have three cases:

\textbf{Case 1} $i=i^{(t)}\in I^{(t)}$ with $i\in \left\{ 2^{(t)},...,n^{(t)}\right\}$.\\

Put $i_{0}=(i-1)^{(t)}$, and let $Q_{i}=f_{m}Q_{i_0}^m + f_{m-1}Q_{i_0}^{m-1}+ ... + f_{0}$.\\
In the case when $Q_{i_0}$ is a key polynomial (in the sense of Vaqui\'e), Vaqui\'e proves in \cite{V} (Th\'eor\`eme 1.11, page 9) that $f_{m}=1$ and that $\mu_{i_{0}}(Q_{i})=m\beta_{i_{0}}$ where $\beta_{i_{0}}=\mu_{i_{0}}(Q_{i_0})=\mu(Q_{i_0})$.

Now if $i_0$ is a limit ordinal and $Q_{i_0}$ is a limit key polynomial (in the sense of Vaqui\'e), this means that we are in the case $t\neq1$ and $(i-2)^{(t)}$ does not exist. We will first prove the Proposition in this case.

We will denote $A=A^{(t-1)}$.\\

As $\deg_{x}f_m<\deg_{x}Q_{i_0}$, there exists $\alpha_{0}\in A$ such that for all $\alpha \in A$, $\alpha\geq\alpha_{0}$, we have $\mu_{i_0}(f_m)=\mu_{\alpha}(f_m)$. By lemma \ref{lemme_limit}
 there exists $g$ in $K[x]$ with $deg_{x}g<deg_{x}Q_{i_0}$ such that $in_{\mu_{i_0}}f_{m}g=in_{\mu_{i_0}}1$.\\
As $deg_{x}g<deg_{x}Q_{i_0}$, there exists $\alpha_{1}\in A$ such that for all $\alpha \in A$, $\alpha\geq\alpha_{1}$, we have $\mu_{i_0}(g)=\mu_{\alpha}(g)$.\\
As for all $j$, $0\leq j\leq m-1$, $deg_{x}f_{j}<deg_{x}Q_{i_0}$, then for all $j$, $0\leq j\leq m-1$, there exists $\alpha_{1,j}\in A$ such that for all $\alpha \in A$, $\alpha\geq\alpha_{1,j}$, we have $\mu_{i_0}(f_{j})=\mu_{\alpha}(f_{j})$.\\
Take $\alpha_{2}=max\left\{\alpha_{1}, \alpha_{1,0},...,\alpha_{1,j},...,\alpha_{1,m-1}\right\}$,
 then for all $\alpha\geq\alpha_{2}$, and for all $j$, $0\leq j\leq m-1$, $\mu_{i_0}(f_{j}g)=\mu_{\alpha}(f_{j}g)$,
therefore by Lemma \ref{lemme_limit}, for all $j$, $0\leq j\leq m-1$, there exists $h_{j}$ in $K[x]$
with $\deg_{x}h_j<\deg_{x}Q_{i_0}$ such that $in_{\mu_{i_0}}f_{j}g=in_{\mu_{i_0}}h_{j}$.\\
Put $\varphi=Q_{i_0}^m + h_{m-1}Q_{i_0}^{m-1}+ ... + h_{0}Q_{i_0}$, we have $in_{\mu_{i_0}}\varphi=in_{\mu_{i_0}}gQ_{i}$ so that $Q_{i}$ $\mu_{i_{0}}$-divides $\varphi$.
Therefore $\deg_{x}\varphi\geq Q_{i}$, hence $\deg_{x}f_{m}=0$, and as $Q_{i}$ is monic in $x$ because $Q_{i}$ is a Vaqui\'e key polynomial, then $f_{m}=1$.\\

Now we have $\mu_{i_{0}}(Q_{i}) \leq m\beta_{i_{0}}$. If we have $\mu_{i_{0}}(Q_{i})<m\beta_{i_{0}}$ (where $\beta_{i_0}=\mu(Q_{i_0})=\mu_{i_0}(Q_{i_0})$) then we will have $in_{\mu_{i_0}}Q_i=in_{\mu_{i_0}}(Q_i-Q_{i_{0}}^m$) which contradicts the fact that $Q_{i}$ is $\mu_{i_0}$-minimal.\\
Therefore $\mu_{i_{0}}(Q_{i})=m\beta_{i_{0}}$.\\

We have $Q_{i}=Q_{i_0}^m + f_{m-1}Q_{i_0}^{m-1}+ ... + f_{0}$, with $\mu_{i_{0}}(Q_{i})=m\beta_{i_{0}}=m\mu_{i_{0}}(Q_{i_0})$.

As for all $j$, $0\leq j\leq m-1$, we have $deg_{x}f_j<deg_{x}Q_{i_0}$ hence we can write $f_{j}=\sum\limits_{\gamma_{i_0}}c_{ji\gamma_{i_0}}\textbf{Q}_{i_0}^{\gamma_{i_0}}$\\
where $c_{ji\gamma_{i_0}}\textbf{Q}_{i_0}^{\gamma_{i_0}}$ is an $i_0$-standard monomial not involving $Q_{i_0}$.\\

We have $j\beta_{i_0}+\mu(\sum\limits_{\gamma_{i_0}}c_{ji\gamma_{i_0}}\textbf{Q}_{i_0}^{\gamma_{i_0}})=j\beta_{i_0}+\mu(f_j)\geq  \mu_{i_{0}}(Q_{i})=m\beta_{i_{0}}$.\\

Finally, by definition of $\mu_{i}$, we have $\mu(Q_i)>\mu_{i_0}(Q_i)$,\\
and we have proved that $\mu_{i_0}(Q_{i})=m\beta_{i_{0}}$, then $\mu_{i}(Q_{i})>m\beta_{i_{0}}$.\\
\\
\textbf{Case 2} $i=i^{(t)}\in I^{(t)}$ such that $i\in A^{(t)}$.\\
Pick any $\alpha$ in $A^{(t)}$ such that $\alpha < i$; note that if $i$ is the first element of $A^{(t)}$ we take $\alpha=n^{(t)}$ the final element of the discrete set of the simple admissible family $S^{(t)}$. Take $i_{0}=\alpha$.\\

We know that $\deg_{x}(Q_{i})=\deg_{x}(Q_{i_0})$, therefore we can write
$$
Q_{i}=Q_{i_0}+z_{i_0}
$$
where $z_{i_{0}}\in K[x]$ with $\deg_{x}z_{i_0}< \deg_{x}Q_{i_0}$.\\
We have $\mu_{i_0}(Q_{i})=\min\left\{\mu_{i_0}(Q_{i_0}), \mu_{i_0}(z_{i_0})\right\}$,\\
if $\mu_{i_0}(Q_{i_0})>\mu_{i_0}(Q_{i})$, then $Q_{i}$ is $\mu_{i_0}$-equivalent to $z_{i_0}$, which contradicts the fact that $Q_{i}$ is $\mu_{i_0}$-minimal.\\
Hence $\mu_{i_0}(Q_{i_0})=\mu_{i_0}(Q_{i})$.\\

Moreover, we have $\mu_{i}(Q_{i})>\mu_{i_0}(Q_{i})=\min\left\{\mu_{i_0}(Q_{i_0}), \mu_{i_0}(z_{i_0})\right\}=\min\left\{\mu_{i}(Q_{i_0}), \mu_{i}(z_{i_0})\right\}$, hence
$$
\mu_{i}(Q_{i})>\mu_{i}(Q_{i_0})=\mu_{i}(z_{i_0}).
$$

\textbf{Case 3} $i=1^{(t)}$, and $t\neq1$, this is the case when $Q_{i}$ is a limit key polynomial for the continued family of valuations $(\mu_{i})_{i\in A^{(t-1)}}$.
As the rank of the group $\Gamma$ is equal to $1$, the subset $\Lambda^{t-1}:=\left\{\mu(Q_{\alpha}), \alpha\in A^{(t-1)} \right\}$ does not admit a largest element but an upper bound in $\Gamma$.
By \cite{V0} (Theorem 3.5, page 33) there exists an integer $m$, such that for $\alpha$ sufficiently large in $A^{(t-1)}$, we have:
\begin{equation}\label{limitvaquiepol}
Q_{i}=Q_{\alpha}^m + f_{m-1}Q_{\alpha}^{m-1}+ ... + f_{0}
\end{equation}
with $\mu_{\alpha}(Q_{i})=m\beta_{\alpha}=m\mu_{\alpha}(Q_{\alpha})$.\\
By construction of the family $F$, $Q_{i}$ satisfies $\mu(Q_{i})>\mu_{\alpha}(Q_{i})$ for all $\alpha<i$. We have $\mu_{i}(Q_{i})=\mu(Q_{i})$ by definition of $\mu_{i}$. \\
Then $\mu_{i}(Q_{i})>\mu_{\alpha}(Q_{i})=m\beta_{\alpha}$ for $\alpha$ sufficiently large in $A^{(t-1)}$.\\
Therefore pick $\alpha$ sufficiently large in $A^{(t-1)}$ and take $i_{0}=\alpha$.\\

By the choice of $i_{0}$ and by the first condition of Definition \ref{def_admitted}, $Q_{i}$ has minimal degree among all the polynomials satisfying $\nu_{i_{0}}(Q_{i})<\mu(Q_{i})$.

\end{proof}

\begin{deft} A \textbf{partial collection of HOS key polynomials associated to }$F$ is a pair of sets $(I', J)$ and a collection of polynomials $\{Q'_i\}_{i\in J}$ having the following properties:
\begin{enumerate}
\item $I'\subset I$.
\item $J\supset I'$
\item $I'$ is cofinal in $J$
 \item $\{Q'_i\}_{i\in J}$ is a $J$-th set of HOS key polynomials
 \item for any two consecutive elements  $i_0,i_1\in I'$, there is at most one element $j\in J$ satisfying (\ref{eq:jbetween}). If such a $j$ exists, we have
 $\deg\ Q'_j=\deg\ Q_{i_1}$.
 \item For each $i\in I'$ there exists $i_0\in I'$ with $i=i_0+$ such that the key polynomial
$Q'_i$ satisfies $\mu_{i_0}(Q_{i}-Q'_{i})>\mu(Q_{i})=\mu(Q'_{i})$.
\end{enumerate}
\end{deft}
Next, we introduce the following partial ordering on the set of all the partial collections of HOS key polynomials, associated to $F$.
\begin{deft} Let $(I',J)$, with $\{Q'_i\}_{i\in J}$ the corresponding $J$-th set of HOS key polynomials and $(I'',J')$, with $\{Q''_j\}_{j\in J'}$ the corresponding $J'$-th set of HOS key polynomials, be two partial collections of HOS key polynomials, associated to $F$. We say that $(I',J)\preceq (I'',J')$ if there is an inclusion $I'\subset I''$, and an inclusion $J\subset J'$, such that $Q'_i=Q''_i$ for all $i\in J$.
\end{deft}

\begin{lem}
The set of all the partial collections of HOS key polynomials associated to $F$ is not empty.
\end{lem}
\begin{proof}
Let $I'=\{1^{(1)}\}$, put $Q'_{1^{(1)}}=Q_{1^{(1)}}$ and let $J=\left\{ 1^{(1)} \right\}$.
\end{proof}

The partially ordered set of all the partial collections of HOS key polynomials, associated to $F$, satisfies the hypotheses of Zorn's lemma, and therefore contains a maximal element. \\

Therefore, to prove Theorem \ref{VaquietoHOS}, it remains to prove the following statement: if $(I', J)$ is a partial collection of HOS key polynomials, associated to $F$, such that $I'$ is not cofinal in $I$ then there exists a partial collection $(I'', J')$ of HOS key polynomials, associated to $F$, such that $(I', J)\prec(I'', J')$.\\

Let $(I' ,J)$ be a partial collection of HOS key polynomials associated to $F$, such that
$I'$ is not cofinal in $I$.\\

To prove the above statement, we will define a partial collection $(I'', J')$ of HOS key polynomials, associated to $F$, such that $(I', J)\prec(I'', J')$.\\

As $I'$ is not cofinal in $I$, there exists $\alpha\in I$ such that $\alpha>I'$. As $I'$ is cofinal in $J$ we have $\alpha>J$.\\

We have two cases:
\begin{enumerate}
\item $J$ admits a maximal element $i_0$.

\item $J$ does not admit a maximal element, but there exists a subset $E\subset I$ such that $E=I^{(1)}\bigcup...I^{(t_{0}-1)}\bigcup I^{(t_{0})}$
with $1\leq t_0<N$ and $I^{(t_0)}=\left\{ 1^{(t_0)},...,n^{(t_0)}\right\}\bigcup A^{(t_0)}$ with $A^{(t_0)}$ not empty, $I'\subset E$ and $I'$ is cofinal in $E$.
\end{enumerate}

We want to define a set $J'$ such that $J\subsetneqq J'$.\\

If $J$ admits a maximal element $i_0$, as $J$ is cofinal in $I'$ and $I'\subset I$, we have $i_0\in I$.\\
Hence there exists $t$, $1 \leq t<N$ such that  $i_{0}\in I^{(t)}=\left\{ 1^{(t)},...,n^{(t)}\right\}\bigcup A^{(t)}$.\\

If $i_0=s^{(t)}$ with $s^{(t)}<n^{(t)}$, put $i$=$(s+1)^{(t)}$.

If $i_0=n^{(t)}$ or $i_0\in A^{(t)}$, then choose any $\alpha\in A^{(t)}$ such that $\alpha>i_0$ and put
$i=\alpha$.\\

If $J$ does not have a maximal element, take $E$ as above, and put $i=1^{(t_0+1)}$.\\

We will define the polynomial $Q'_{i}$.

\begin{deft}
Let $t$, $2\leq t<N$. For an element $i$, $i\in I$, we say that $i$ is a limit ordinal for the family $F$ if $i=1^{(t)}$. Otherwise we say that $i$ is a simple ordinal for $F$.
\end{deft}

Assume that $i$ is a simple ordinal for $F$.

\begin{deft}\label{simpleordinalqi}
If $i\in \{ 1^{(t)},...,n^{(t)}\}$ with $1\leq t<N$,
write $Q_{i}$ as in (\ref{eq:standardform}).\\

For $j\in\{0,\dots,\alpha_i-1\}$, let $a_{ji_0}$ denote the sum of all the monomials $c_{ji\gamma_{i_0}}\textbf{Q}_{i_0}^{\gamma_{i_0}}$ of value $\mu_{i_0}(Q_{i})-j\beta_{i_0}=\alpha_{i}\beta_{i_0}-j\beta_{i_0}$ (if the set of such monomials is empty, we put $a_{ji_0}=0$). Put $P_{i}=Q_{i_0}^{\alpha_{i}}+\sum\limits_{j=0}^{\alpha_{i}-1}a_{ji_{0}}Q_{i_0}^j$.
If $i\in A^{(t)}$ with $1\leq t<N$ we put $P_{i}=Q_{i}$.
\end{deft}

\begin{prop}
$P_{i}$ is a Vaqui\'e key polynomial for the valuation $\mu_{i_0}$.
\end{prop}

\begin{proof}
Indeed, $P_{i}$ is monic, and $P_{i}$ and $Q_{i}$ are $\mu_{i_0}$-equivalent by definition.
\end{proof}

We will now define the polynomial $P_{i}$ in the case when $i$ is a limit ordinal.\\

Let $i=1^{(t_{0}+1)}$, $Q_{i}$ is a limit key polynomial in the sense of Vaqui\'e for the continued family of valuations $\left\{\mu_{i'}\right\}_{i'\in A^{(t_{0})}}$.

As the rank of the group $\Gamma$ is equal to $1$, the subset $\Lambda^{(t_{0})}:=\left\{\mu(Q_{\alpha}), \alpha\in A^{(t_{0})} \right\}$ does not admit a maximal element but an upper bound $\overline{\beta_{t_{0}}}$ in $\Gamma$.
\\

By the proof of Proposition 55 (\cite{HOS} page 1063), if $f=\sum\limits_{j=0}^{s}a_{ji'}Q_{i'}^{j}\in K[x]$ with $i'\in J$, satisfies $\mu_{i''}(f)<\mu(f)$ for all $i''\in J$, then there exists an $i_{1}\in J$ and an integer $0<j \le s$, $j=p^{e_{0}}$ for $e_{0}$ a strictly positive integer, such that for all $i'>i_{1}\in J$ the polynomial $f':=b_{p^{e_{0}}i_{1}}Q_{i_{1}}^{p^{e_{0}}} + \sum\limits_{j=0}^{p^{e_{0}}-1}b_{ji_{1}}Q_{i_{1}}^{j}$ (where  $b_{ji_{1}}$ is an $i_{1}$-standard expansion not involving $Q_{i_{1}}$ for all $0\leq j< p^{e_{0}}$) satisfies $\mu_{i''}(f')<\mu(f')$ for all $i''\in J'$. The integer $p^{e_{0}}$ is the minimal degree in $Q_{i'}$ with $i'\in J$, of a polynomial $f\in K[x]$ satisfying $\mu_{i''}(f)<\mu(f)$ for all $i''\in J$.\\
\\
By \cite{V0} (Theorem 3.5, page 33) there exists $i_{0}\in J$ such that for all $i'>i_{0}\in J$, we can write $Q_{i}=Q_{i'}^{m} + \sum\limits_{j=0}^{m-1}d_{ji'}Q_{i'}^{j}$ with $d_{ji'}\in K[x]$, $\deg_xd_{ji'}<\deg_xQ_{i'}$, and $\mu_{i'}(Q_{i})=m\beta_{i'}=\mu(d_{0i'})$.\\
\\

By the first condition of Definition \ref{def_admitted} we know that $Q_{i}$ is the polynomial with the minimal degree among those which satisfy $\mu_{i'}(Q_{i})<\mu(Q_{i})$ for all $i'\in J'$. Hence $m=p^{e_{0}}$.\\
\\

Write $Q_{i}=Q_{i_{0}}^{m} + \sum\limits_{j=0}^{m-1}a_{ji_{0}}Q_{i_{0}}^{j}$ with $a_{ji_{0}}$ and $i_{0}$-standard expansion not involving $Q_{i_{0}}$, then also by the proof of Proposition 55 in \cite{HOS} there exists $i_{1}>i_{0}$ and a polynomial $P_{i}=Q_{i_{1}}^{p^{e_{0}}} + a_{0i_{1}} + \sum\limits_{j=1}^{p^{e_{0}}-1}a'_{ji_{1}}Q_{i_{1}}^{j}$ where $a'_{ji_{1}}=a_{ji_{1}}$ or $a'_{ji_{1}}=0$ and $P_{i}$ is a weakly affine $i_{1}$-standard expansion with $\mu_{i_{1}}(P_{i})=p^{e_{0}}\beta_{i_{1}}=\mu(a_{0i_{1}})$
and for all $0<j\le p^{e_{0}-1}$ we have $\mu(a'_{ji_{1}})+j\overline{\beta}=p^{e_{0}}\overline{\beta}$.

As well, $P_{i}$ satisfies $\mu(P_{i})>\mu_{i'}(P_{i})$ for all $i'\in J$. Hence $P_{i}$ is a limit Vaqui\'e key polynomial for the family $\left\{\mu_{\alpha}\right\}_{\alpha\in A^{(t_{0})}}$.
\\
For simplicity, we will replace $i_1$ by $i_0$ in the above definition of $P_i$.

Now we will define the valuation $\mu'_{i}$.

\begin{deft}
Put $\beta'_{i}=\mu(P_{i})$.
If $i$ is a simple ordinal, we define the valuation $\mu'_{i}:=[\mu_{i_{0}}; \mu(P_{i})=\beta'_{i}]$.
If $i$ is a limit ordinal $i=1^{(t_0+1)}$, we define the valuation $\mu'_{i}:=[\{\mu_{\alpha}\}_{\alpha\in A^{(t_0)}}; \mu(P_{i})=\beta'_{i}]$.
\end{deft}

As $P_{i}$ and $\mu'_{i}$ are well defined, we will study the valuation $\mu'_{i}$.

\begin{rek}\label{samevaluation}
Put $h=Q_{i}-P_{i}$.
We have $deg_{x}(h)<deg_{x}Q_{i}=deg_{x}P_{i}$, therefore $\mu_{i_{0}}(h)=\mu(h)$.\\
\\
Furthermore, if \begin{equation}\label{restrictioncondition}
\mu(h)\geq \beta'_{i}=\beta_{i},
\end{equation}
then by Proposition \ref{sameaugmentedvaluation} and Proposition \ref{samelimitaugmentedvaluation} we have $\mu'_{i}=\mu_{i}$.
\end{rek}

\begin{prop}\label{muprimeiequalmui}
If $\mu_{i}(P_{i})=\beta'_{i}$ then we have:
\begin{enumerate}
\item $\mu(h)\geq \beta'_{i}$.
\item $\beta_{i}\geq \beta'_{i}$.
\end{enumerate}
\end{prop}

\begin{proof}
\begin{enumerate}
\item We have $Q_{i}=P_{i}+h$, with $\mu_{i_0}(h)=\mu(h)$ then
\begin{equation}\label{betai1}
\beta_{i}=\mu(Q_{i})\geq \min\left\{ \mu(P_{i}), \mu(h) \right\}=\min\left\{ \beta'_{i}, \mu(h) \right\}.
\end{equation}

On the other hand, by definition of $\mu_{i}$ we have
\begin{equation}\label{betai2}
\beta'_{i}=\mu_{i}(P_{i})=\min\left\{ \beta_{i}, \mu(h) \right\}.
\end{equation}

Suppose that $\mu(h)<\beta'_{i}$, then from (\ref{betai1}) $\beta_{i}\geq \mu(h)$ then from (\ref{betai2})
$\beta'_{i}=\mu(h)$ which is impossible. Therefore we have $\mu(h)\geq \beta'_{i}$.

\item Now 2 follows from (\ref{betai1}).
\end{enumerate}

\end{proof}

Now we can construct the partial collection of HOS key polynomials $(I'', J')$.\\

If $\mu_{i}=\mu$, we have $\mu_{i}(P_{i})=\mu(P_{i})=\beta'_{i}$ and by Proposition \ref{muprimeiequalmui}
we have $\mu(h)\geq\beta'_{i}$ and $\beta_{i}\geq\beta'_{i}$.

If $\mu(h)>\beta'_{i}$, or if $\mu(h)=\beta'_{i}$ and $\beta_{i}=\beta'_{i}$ then the condition (\ref{restrictioncondition}) is satisfied and by Remark \ref{samevaluation} we have $\mu'_{i}=\mu_{i}=\mu$.

Let $l$ be a new index. We put $I''=I'\bigcup \{l \}$ and $J'=J\bigcup \{l \}$, $Q'_l=P_i$. The set $\{Q'_{j}\}_{j\in J'}$ is a $J'$-th set of HOS key polynomials. $(I'', J')$ is a partial collection of HOS key polynomials associated to $F$ with $(I', J)\prec (I'', J')$.\\

If $\mu(h)=\beta'_{i}$ and $\beta_{i}>\beta'_{i}$,
then we have $Q_{i}=P_{i}+h$ with $deg_{x}h<deg_{x}P_{i}$ and $\beta_{i}>\beta'_{i}=\mu(h)$.\\
We define two new indices $i_{1}, l$ such that $J<i_{1}<l$. We put $Q'_{i_{1}}=P_{i}$ and $Q'_{l}=Q_{i}$.
We put $I''=I'\bigcup \{l\}$ and $J'=J\bigcup \{i_{1},l\}$; the set $\{Q'_{j}\}_{j\in J'}$ is a $J'$-th set of HOS key polynomials. Then $(I'', J')$ is a partial collection of HOS key polynomials associated to $F$ with $(I', J)\prec (I'', J)$.\\

From now on assume that $\mu_{i}\neq\mu$.\\

If $i\in I^{(t)}$ such that $i=s^{(t)}$ with $s^{(t)}<n^{(t)}$, then $i+1$ exists in $I$ and $i+1=(i+1)^{(t)}$ and the polynomial $f$ with the minimal degree that satisfies $\mu(f)>\mu_{i}(f)$ has degree $deg_{Q_{i}}Q_{i+1}>1=deg_{Q_{i}}P_{i}$, hence the polynomial $P_{i}$ satisfies $\mu_{i}(P_{i})=\mu(P_{i})$ and by Proposition \ref{muprimeiequalmui} we have $\mu(h)\geq\beta'_{i}$ and $\beta_{i}\geq\beta'_{i}$.

If $\mu(h)>\beta'_{i}$, or if $\mu(h)=\beta'_{i}$ and $\beta_{i}=\beta'_{i}$ then the condition (\ref{restrictioncondition}) is satisfied and by Remark \ref{samevaluation} we have $\mu'_{i}=\mu_{i}$.\\

In this case, we put $I''=I'\bigcup \{i \}$ and $J'=J\bigcup \{i \}$, $Q'_i=P_i$. The set $\{Q'_{j}\}_{j\in J'}$ is a $J'$-th set of HOS key polynomials. Then $(I'', J')$ is a partial collection of HOS key polynomials associated to $F$ with $(I', J)\prec(I'', J)$.

If $\mu(h)=\beta'_{i}$ and $\beta_{i}>\beta'_{i}$, then we have $Q_{i}=P_{i}+h$ with $deg_{x}h<deg_{x}P_{i}$ and $\beta_{i}>\beta'_{i}=\mu(h)$.\\
We define two new indices $i_{1}, l$ such that $J<i_{1}<l$. We put $Q'_{i_{1}}=P_{i}$ and $Q'_{l}=Q_{i}$.
We put $I''=I'\bigcup \{l\}$ and $J'=J\bigcup \{i_{1},l\}$; the set $\{Q'_{j}\}_{j\in J'}$ is a $J'$-th set of HOS key polynomials. Then $(I'', J')$ is a partial collection of HOS key polynomials associated to $F$ with $(I', J)\prec(I'', J)$.\\

If $i=n^{(t)}$ we have two cases:\\

Case 1: $P_{i}$ satisfies the condition (\ref{restrictioncondition}), we have $\mu'_{i}=\mu_{i}$.
We choose a cofinal subset $D^{(t)}$ in $A^{(t)}$ which is of order type $\Bbb N$.

We put $I''=I'\bigcup \{i\}\bigcup \{D^{(t)}\}$ and $J'=J\bigcup \{i\}\bigcup \{D^{(t)}\}$, $Q'_i=P_i$, and we take the set $\{Q'_{i'}\}_{i'\in J''}$  with $Q'_{i'}=Q_{i'}$ if $i'\in \{D^{(t)}\}$. The set $\{Q'_{j}\}_{j\in J'}$ is a $J'$-th set of HOS key polynomials and $(I'', J')$ is a partial collection of HOS key polynomials associated to $F$ with $(I', J)\prec (I'', J')$.

Case 2: $P_{i}$ does not satisfy the condition (\ref{restrictioncondition}), then either
$\mu(P_{i})>\mu(Q_{i})$ or $\mu(P_{i})<\mu(Q_{i})$.\\

Suppose that $\mu(P_{i})>\mu(Q_{i})$. As the family $\{\mu_{\alpha}\}_{\alpha\in A}$ is exhaustive, there exists
a polynomial $Q_{\alpha}$ with $\alpha\in A$ such that $\mu(P_{i})=\mu(Q_{\alpha})$. In this case
we put $Q'_{i}=Q_{\alpha}$ witch is also a Vaqui\'e key polynomial for the valuation $\mu'_{i_0}=\mu_{i-1}$.\\
We put :\\
$A'^{(t)}=\left\{\left.\alpha'\in A^{(t)}\ \right|\ \alpha'>\alpha\right\}$,\\
We choose a cofinal subset $D^{(t)}$ from $A'^{(t)}$ of order type $\Bbb N$.
We put $I''=I'\bigcup \{\alpha\} \bigcup D^{(t)}$ and $J'=J\bigcup \{\alpha\} \bigcup D^{(t)}$,\\
and we take the set $\{Q'_{i'}\}_{i'\in J'}$  with $Q'_{i'}=Q_{i'}$ if $i'\in \{D^{(t)}\}$. The set $\{Q'_{i'}\}_{i'\in J'}$ is a $J'$-th set of HOS key polynomials. Then $(I'', J')$ is a partial collection of HOS key polynomials associated to $F$ with $(I', J)\prec(I'', J')$.

Now assume that $\mu(P_{i})<\mu(Q_{i})$. Then we have $Q_{i}=P_{i}+h$ with $deg_{x}h<deg_{x}P_{i}$ and $\beta_{i}>\beta'_{i}=\mu(h)$.\\
We define two new indices $i_{1}, l$ such that $J<i_{1}<l$. We put $Q'_{i_{1}}=P_{i}$ and $Q'_{l}=Q_{i}$.
We choose a cofinal subset $D^{(t)}$ from $A^{(t)}$ of order type $\Bbb N$.
We put $I''=I'\bigcup \{l\}\bigcup \{D^{(t)}\}$ and $J'=J\bigcup \{i_{1},l\}\bigcup \{D^{(t)}\}$ the set $\{Q'_{j}\}_{j\in J'}$ is a $J'$-th set of HOS key polynomials.
Then $(I'', J')$ is a partial collection of HOS key polynomials associated to $F$ with $(I', J)\prec (I'', J')$.

The only case that left is the case when $i=\alpha$ with $\alpha\in A^{(t)}$. In this case
we have $P_{i}=Q_{\alpha}$ and $\mu'_{i}=\mu_{\alpha}$.\\
We choose a cofinal subset $D^{(t)}$ from $A^{(t)}$ of order type $\Bbb N$ such that $D^{(t)}>\alpha$.
We put $I''=I'\bigcup \{i\}\bigcup \{D^{(t)}\}$ and $J'=J\bigcup \{i\}\bigcup \{D^{(t)}\}$ and we take the set $\{Q'_{i'}\}_{i'\in J'}$ with $Q'_{i'}=Q_{i'}$ if $i'\in \{D^{(t)}\}$.  The set $\{Q'_{i'}\}_{i'\in J'}$ is a $J'$-th set of HOS key polynomials. Then $(I'', J')$ is a partial collection of HOS key polynomials associated to $F$ with $(I', J)\prec(I'', J')$.

We have constructed a partial collection of HOS key polynomials associated to $F$ with $(I', J)\prec(I'', J')$.\\

By Zorn's Lemma, the partially ordered set of all the partial collections of HOS key polynomials, associated to $F$ contains a maximal element, hence there exists a partial collection $(I^{e}, J^{e})$ of HOS key polynomials associated to $F$ witch is maximal for the partial ordering of the set of all the partial collections of HOS key polynomials associated to $F$.\\
Let $\{Q'_{i}\}_{i\in J^{e}}$ be the $J^{e}$-th set of HOS key polynomials associated to $(I^{e}, J^{e})$.\\

From the proof above, $I^{e}$ is cofinal in $I$.\\

Now suppose that the family $F$ converges to $\mu$.
then for all $f\in K[x]$, there exists $i\in I^{(e)}$ such that $\mu^{e}_{i}(f)=\mu(f)$.
As $I^{e}$ is cofinal in $I$,
then for all $f\in K[x]$, there exists $i\in I^{(e)}$ such that $\mu^{e}_{i}(f)=\mu(f)$.
And as $I^{e}\subset J^{e}$
therefore the set $\{Q'_j\}_{j\in J^{e}}$ is a complete set of HOS key polynomials for $\mu$.

\end{proof}

\section{Example.}\label{example_limit}

In this section, we want to give an example of a limit key polynomial, such that both valuations $\nu$ and $\mu$ are centered in local noetherian rings, one of which dominates the other.\\

We start by giving some definitions and some properties of key polynomials and augmented valuations.\\

Let $x$ be a variable and $k$ a field. Let $\nu$  be a valuation of $k[x]$, $\Gamma$ an ordered group containing $\nu(K[x])$ as a sub-group, $\phi$ a key polynomial for $\nu$, and $\gamma\in\Gamma$ such that $\gamma>\nu(\phi)$. \\
Let $\mu=[\nu;\mu(\phi)=\gamma]$. For all $f=\sum\limits_{j=0}^{s}a_{j}\phi^{j}\in K[x]$, with $a_{j}\in k[x]$ and $deg_{x}a_{j}<deg_{x}\phi$ for $0\leq j\leq s$, we define $D_{\phi}(f)=max\left\{j\in\left\{0,...,s\right\}\ /\ \mu(f)=\nu(a_{j})+ j\gamma\right\}$.\\
If $d=D_{\phi}(f)$ then by definition we have $\init_{\mu}f=\init_{\mu}\sum\limits_{j=0}^{d}a_{j}\phi^{j}$.\\
We notice that the integer $D_{\phi}(f)$ depends only on the image $\init_{\mu}f$ in the graded algebra $G_{\mu}$,
therefore if $f$ and $f'$ are $\mu$-equivalent, then $D_{\phi}(f)=D_{\phi}(f')$.\\
We have also $D_{\phi}(f.g)=D_{\phi}(f)+D_{\phi}(g)$.
\begin{lem}\label{minimal}
Let $f=\sum\limits_{j=0}^{s}a_{j}\phi^{j}$, with $a_{j}\in k[x]$ and $deg_{x}a_{j}<deg_{x}\phi$ for $0\leq j\leq s-1$ and $deg_{x}a_{s}=0$
then:
 $$
 \mu(a_{s}\phi^{s})=\mu(f)\Rightarrow\ f\ is\ \mu-minimal.
 $$
\end{lem}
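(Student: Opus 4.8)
I want to show that $f$ is $\mu$-minimal, i.e. that whenever $f$ $\mu$-divides some $g\in K[x]$, then $\deg_x g\ge\deg_x f$. The key numerical invariant is $D_\phi$: by hypothesis $\mu(a_s\phi^s)=\mu(f)$, and since $a_s$ is a nonzero constant (so $\deg_x a_s=0$), $s$ is certainly \emph{a} value of $j$ realizing the minimum, hence $D_\phi(f)=s$ — indeed $s$ is the largest possible index, so it is automatically $D_\phi(f)$. Note also that $\deg_x f=s\deg_x\phi$ exactly because the leading term $a_s\phi^s$ has $a_s$ constant; in general $\deg_x f=s\deg_x\phi+\deg_x a_s$, but here $\deg_x a_s=0$.

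First I would record the two properties of $D_\phi$ stated just before the lemma: it depends only on $\init_\mu$ (so $\mu$-equivalent polynomials have the same $D_\phi$), and it is additive, $D_\phi(gh)=D_\phi(g)+D_\phi(h)$. Now suppose $f$ $\mu$-divides $g$: there is $h\in K[x]$ with $f$ $\mu$-equivalent to $hg$... wait — I mean $g$ is $\mu$-equivalent to $hf$ for some $h\in K[x]$. Then
$$
D_\phi(g)=D_\phi(hf)=D_\phi(h)+D_\phi(f)=D_\phi(h)+s\ge s.
$$
On the other hand, writing $g=\sum_{j\ge 0}c_j\phi^j$ with $\deg_x c_j<\deg_x\phi$, the definition of $D_\phi(g)$ forces $c_{D_\phi(g)}\ne 0$, so $g$ genuinely has a nonzero $\phi^{D_\phi(g)}$-term in its $\phi$-adic expansion. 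Therefore
$$
\deg_x g\ \ge\ D_\phi(g)\cdot\deg_x\phi\ \ge\ s\cdot\deg_x\phi\ =\ \deg_x f,
$$
which is exactly $\mu$-minimality of $f$.

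**The main obstacle.** The only subtle points are bookkeeping: checking that $D_\phi(f)=s$ really does follow from $\mu(a_s\phi^s)=\mu(f)$ together with $\deg_x a_s=0$ (it does, since $s$ is the top index in the $\phi$-expansion and $D_\phi$ is defined as the \emph{largest} $j$ attaining the minimum), and checking that the $\phi$-adic expansion of a product behaves as expected under $\mu$-equivalence — but this is precisely the content of the additivity $D_\phi(gh)=D_\phi(g)+D_\phi(h)$ and the equivalence-invariance of $D_\phi$, both of which are granted in the text immediately preceding the lemma. I would also note in passing that this argument does not need $\mu$-irreducibility or monicity, only $\mu$-minimality is asserted, so there is nothing further to prove. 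The whole proof is therefore short: identify $D_\phi(f)=s$, invoke additivity and the degree bound $\deg_x g\ge D_\phi(g)\deg_x\phi$, and conclude.
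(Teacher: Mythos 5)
Your proposal is correct and follows essentially the same route as the paper: identify $D_\phi(f)=s$ from the hypothesis, use the $\mu$-equivalence invariance and additivity of $D_\phi$ to get $D_\phi(g)\ge D_\phi(f)$ when $f$ $\mu$-divides $g$, and conclude via $\deg_x g\ge D_\phi(g)\deg_x\phi\ge s\deg_x\phi=\deg_x f$. The only differences are cosmetic (you spell out why $D_\phi(f)=s$ and why the degree bound holds, which the paper leaves implicit), so nothing further is needed.
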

\begin{proof}
Let $g\in k[x]$ such that $f\ \mu$- divides $g$.\\
Then there exists $h\in k[x]$ such that
$\init_{\mu}g=\init_{\mu}h\init_{\mu}f$, therefore $D_{\phi}g\geq D_{\phi}f$,\\
and for all $g\in k[x]$ we have $deg_{x}g\geq D_{\phi}g.deg_{x}\phi$.\\
On the other hand from the hypothesis we have $deg_{x}f=D_{\phi}f.deg_{x}\phi$.\\
Finally, we find $deg_{x}g \geq D_{\phi}g.deg_{x}\phi \geq D_{\phi}f.deg_{x}\phi = deg_{x}f$.

\end{proof}

Let $k$ be a field of characteristic $p>2$.\\
Consider the pure transcendental field extension $k(z)$ of $k$ with the $z$-adic valuation $\nu_{0}$ (with $\nu_{0}(z)=1$).\\

Consider the field $K=k(y,z)$ where $y$ is a pure transcendental element over $k(z)$. \\
We define the valuation $\nu$ of $K$ which extends $\nu_{0}$ in the following way:\\

Put : \\
\begin{flalign*}
Q_{y,1}&=y& \gamma_{y,1}&=\frac{1}{2}&\\
Q_{y,2}&=y^2+z& \gamma_{y,2}&=p-\frac{1}{4}&\\
Q_{y,3}&=Q_{y,2}^{2}+z^{2p-1}y& \gamma_{y,3}&=2p-\frac{1}{8p}&\\
Q_{y,4}&=Q_{y,3}^{2p}-z^{4p^{2}-p}Q_{y,2}& \gamma_{y,4}&=4p^2-\frac{1}{16}&\\
Q_{y,5}&=Q_{y,4}^{2}+z^{6p^{2}}Q_{y,3}^{p}& \gamma_{y,5}&=8p^{2}-\frac{1}{32p}&\\
\end{flalign*}
For $j>2$ put:
\begin{flalign*}
Q_{y,2j}&=Q_{y,2j-1}^{2p}+z^{2^{2j-2}p^{j}-2^{2j-4}p^{j-1}}Q_{y,2j-2}& \gamma_{y,2j}&=2^{2j-2}p^{j}-\frac{1}{2^{2j}}&\\
Q_{y,2j+1}&=Q_{y,2j}^{2}+z^{3(2)^{2j-3}p^{j}}Q_{y,2j-1}^{p}& \gamma_{y,2j+1}&=2^{2j-1}p^{j}-\frac{1}{2^{2j+1}p}&\\
\end{flalign*}

Now we define recursively for all $j\geq 1$ the augmented valuation $\nu_{j}=[\nu_{j-1};\nu_{j}(Q_{y,j})=\gamma_{j}]$. In fact, the construction of $\nu_{1}$ is obvious. And we notice that every polynomial $Q_{y,j}$ is a key polynomial for the valuation $\nu_{j-1}$.

Indeed, each polynomial $Q_{y,j}$ is monic, and by lemma \ref{minimal}, for all $j\geq 1$, $Q_{y,j}$ is $\nu_{j-1}$-minimal.\\

Now to prove that $Q_{y,2j}$ is $\nu_{2j-1}$-irreducible, it is sufficient
to prove that the image of the monomial $z^{2^{2j-2}p^{j}-2^{2j-4}p^{j-1}}Q_{y,2j-2}$, in the
graded algebra $G_{\nu_{2j-1}}$ is neither a square $2$ nor a $p$-th power,
i.e it is sufficient to prove that $\nu_{2j-1}(z^{2^{2j-2}p^{j}-2^{2j-4}p^{j-1}}Q_{y,2j-2})$
is not divisible by either $2$ or $p$ in the group $\mathbb{Z}+\nu(Q_{y,1})\mathbb{Z}+...+\nu(Q_{y,2j-1})\mathbb{Z}$.
As $in_{\nu_{2j-1}}Q_{y,2j-1}$ is transcendental over $in_{\nu_{2j-1}}\bold Q_{y,2j-1}$ in $G_{\nu_{2j-1}}$,
it is sufficient to prove that $\nu_{2j-1}(z^{2^{2j-2}p^{j}-2^{2j-4}p^{j-1}}Q_{y,2j-2})$ is not divisible by either $2$ or $p$ in the group $\mathbb{Z}+\nu(Q_{y,1})\mathbb{Z}+...+\nu(Q_{y,2j-2})\mathbb{Z}$.\\
Now $\nu_{2j-1}(z^{2^{2j-2}p^{j}-2^{2j-4}p^{j-1}}Q_{y,2j-2})=2^{2j-2}p^{j}-\frac{1}{2^{2j-2}}=2p(2^{2j-3}p^{j-1}-\frac{1}{2^{2j-1}p})$.
But neither $2(2^{2j-3}p^{j-1}-\frac{1}{2^{2j-1}p})$ nor $p(2^{2j-3}p^{j-1}-\frac{1}{2^{2j-1}p})$ can be
in $\mathbb{Z}+\nu(Q_{y,1})\mathbb{Z}+...+\nu(Q_{y,2j-2})\mathbb{Z}$.
Hence $Q_{y,2j}$ is $\nu_{2j-1}$-irreducible.

And with the same method we prove that $Q_{y,2j+1}$ is $\nu_{2j}$-irreducible.

We notice that for all $f\in k(z)[y]$, there exists $j\in \mathbb{N}$ such that $\forall i>j$, $\nu_{i}(f)=\nu_{j}(f)$. Therefore we can define the valuation $\nu$ by:
$$
\forall f\in k(z)[y],\ \nu(f)=max\left\{\nu_{j}(f)\ |\ j\in\mathbb{N}\right\}
$$

Now, consider the field $K(x)$ with $x$ a pure transcendental element over $K$.

We define the valuation $\mu$ of $K(x)$ which extends the valuation $\nu$ in the following way: \\

We will first define $h_{i}(y,z)\in K$. Let $h_{i}$ be defined by:

\begin{flalign*}
h_{1}&=\frac{Q_{y,3}^2}{z^{4p-1}}& \nu(h_{1})=2\nu(Q_{y,3})-\nu(z^{4p-1})=4p-\frac{1}{4p}-4p+1=1-\frac{1}{4p}\\
h_{i}&=\frac{Q_{y,2i+1}^2}{z^{2^{2i}p^{i}-1}}& \nu(h_{i})=2\nu(Q_{y,2i+1})-\nu(z^{2^{2i}p^{i}-1})=2^{2i}p^{i}-\frac{1}{2^{2i}p}-2^{2i}p^{i}+1=1-\frac{1}{2^{2i}p}\\
\end{flalign*}

Now put : $\mu(x)=1-\frac{1}{4p}$.
We have $\mu(x)=\mu(h_{1})$, and the value of the polynomial $x-h_{1}$ is not determined by the values of $x$ and $h_1$. Put
\begin{flalign*}
	Q_{x,1}&=x-h_{1}& and& &\mu(Q_{x,1})=1-\frac{1}{2^{4}p}>\mu(x).&\\
\end{flalign*}
we have $\mu(Q_{x,1})=\mu(h_{2})$, and the value of the polynomial $x-h_{1}-h_{2}=Q_{x,1}-h_{2}$ is not determined by the values of $Q_{x,1}$ and $h_2$. Put
\begin{flalign*}
Q_{x,2}&=x-h_{1}-h_{2}& and& &\mu(Q_{x,2})=1-\frac{1}{2^{6}p}>\mu_{1}(Q_{x,2})=\mu(Q_{x,1})&\\
\end{flalign*}
where $\mu_{1}$ is the $i$-truncation associated to the key polynomial $Q_{x,1}$.\\
We have $\mu(Q_{x,2})=\mu(h_{3})$, and the value of the polynomial $x-h_{1}-h_{2}-h_{3}=Q_{x,2}-h_{3}$ is not determined by the values of $Q_{x,2}$ and $h_3$. Put
\begin{flalign*}
Q_{x,3}&=x-h_{1}-h_{2}-h_{3}& and& &\mu(Q_{x,3})=1-\frac{1}{2^{8}p}>\mu_{2}(Q_{x,3})=\mu(Q_{x,2})&\\
\end{flalign*}
where $\mu_{2}$ is the $i$-truncation associated to the key polynomial $Q_{x,2}$.\\

We can construct by induction, an infinite family of key polynomials $\left\{Q_{x,i}\right\}_{i\in \mathbb{N}}$, with an infinite family of $i$-truncations $\left\{\mu_{i}\right\}_{i\in \mathbb{N}}$ associated to $\left\{Q_{x,i}\right\}_{i\in \mathbb{N}}$.
\begin{flalign*}
Q_{x,i}&=x-h_{1}-h_{2}-...-h_{i}& and& &\mu(Q_{x,i})=1-\frac{1}{2^{2i+2}p}.&\\
\end{flalign*}

To simplify the notation we will denote $Q_{i}:=Q_{x,i}$, and $\beta_{i}=\mu(Q_{i})=\mu_{i}(Q_{i})$.\\

For each $i \in\mathbb{N}$, we have $Q_{x,i}=Q_{x,i-1}-h_{i}$ and $\beta_{i}=\mu_{i}(Q_{i})>\mu_{i-1}(Q_{i})=\mu_{i-1}(Q_{i-1})=\beta_{i-1}$.\\

The sequence $\left\{\beta_{i}\right\}_{i \in\mathbb{N}}$ is strictly increasing and bounded; it does not contain a maximal element. We have $\lim\limits_{i\rightarrow\infty}\beta_{i}=\bar{\beta}=1$.

Now take the polynomial $f=x^{p}-y^2-z$. We want to prove that $f$ is a limit key polynomial, that is, that $f$ is the polynomial of smallest degree which satisfies $\mu(f)>\mu_{i}(f)$ for all $i \in\mathbb{N}$.

Replacing $x$ by $Q_{i-1}=x-h_{1}-h_{2}-...-h_{i}$ in $f$ we find :
$$
f=Q_{i-1}^p+h_{1}^{p}+h_{2}^{p}+...+h_{i-1}^{p}-y^2-z.
$$

We notice that:
$$
-y^2-z+h_{1}^p=-Q_{y,2}+\frac{Q_{y,3}^{2p}}{z^{4p^{2}-p}}=\frac{Q_{y,4}}{z^{4p^{2}-p}}
$$
and
$$
-y^2-z+h_{1}^p+h_{2}^p=\frac{Q_{y,4}}{z^{4p^{2}-p}}+\frac{Q_{y,5}^{2p}}{z^{16p^{3}-p}}=\frac{Q_{y,6}}{z^{16p^{3}-p}}.
$$
It is not hard to prove that for every $i \in\mathbb{N}$ we have
$$
-y^2-z+h_{1}^p+h_{2}^p+...+h_{i-1}^p=\frac{Q_{y,2i}}{z^{2^{2i-2}p^{i}-p}}.
$$

Therefore for all $i\in \mathbb{N}$ we have:
\begin{flalign*}
f&=Q_{i-1}^p+\frac{Q_{y,2i}}{z^{2^{2i-2}p^{i}-p}}&\\
\end{flalign*}
with $\nu(\frac{Q_{y,2i}}{z^{2^{2i-2}p^{i}-p}})=\nu(Q_{y,2i})-(2^{2i-2}p^{i}-p)=2^{2i-2}p^{i}-\frac{1}{2^{2i}}-2^{2i-2}p^{i}+p=p-\frac{1}{2^{2i}}=p\beta_{i-1}$.

Therefore for all $i\in \mathbb{N}$ we have $\mu_{i+1}(f)>\mu_i(f)=p-\frac{1}{2^{2i+2}}$. Put $\mu(f)=p>p-\frac{1}{2^{2i+2}}=\mu_{i}(f)$ for all $i$.

Suppose that there exists $g\in K[X]$, such that $\deg_{x}g<\deg_{x}f=p$ and that $\mu(g)>\mu_{i}(g)$ for all $i \in\mathbb{N}$. We may assume that $g$ is monic, and that $m=\deg_{x}g$ is the minimal degree for all the polynomials $\phi\in K[X]$ that satisfy the relation $\mu(\phi)>\mu_{i}(\phi)$ for all $i \in\mathbb{N}$.

Then there exists $i_{0}$ such that for all $i\geq i_{0}$ in $\mathbb{N}$, we have :
$$
g=Q_{i}^m + g_{m-1}Q_{i}^{m-1}+ ... + g_{0}
$$
with $\mu_{i}(g)=m\beta_{i}=m\mu_{i}(Q_{i})$.\\
As $\deg_{x}g<\deg_{x}f$, write $f=f_{r}g^r + f_{r-1}g^{r-1}+ ... + f_{0}$, with $f_{j}\in K[X]$ with $deg_{x}f_{j}<deg_{x}g$ for all $0\leq j\leq r$.\\
For all $j$, $0\leq j\leq r$, there exists an $i_{1,j}$ such that for all $i\geq i_{1,j}$,
$$
\mu_{i}(f_{j})=\mu_{i+1}(f_{j})=...=\mu(f_{j})=\delta_{j}.
$$
Put $i_{2}=max\left\{i_{0},i_{1,0},...,i_{1,j},...,i_{1,r}\right\}$, then for all $i\geq i_{2}$ we have :
$$
\mu_{i}(f)\geq \min_{0\leq j\leq r}\left\{\mu_{i}(f_{j}g^{j})\right\}=\min_{0\leq j\leq r}\left\{\delta_{j}+jm\beta_{i}\right\}.
$$
The set $\left\{\beta_{i}\ /\ i\geq i_{2} \right\}$ is infinite, and $j$ and $\delta_{j}$
cannot take but a finite number of values, therefore there exists an $i_{3}\geq i_{2}$, such that for all $i\geq i_{3}$, $\min_{0\leq j\leq r}\left\{\delta_{j}+jm\beta_{i}\right\}$ is attained only once, therefore $\mu_{i}(f)=\delta_{j}+jm\beta_{i}$. On the other hand we have $\mu_{i}(f)=p\beta_{i}$ hence
considering $i,i'$ $> i_{3}$ will give :
$\mu_{i}(f)=\delta_{j} + jm\beta_{i}=p\beta_{i}$ and $\mu_{i'}(f)=\delta_{j} + jm\beta_{i'}=p\beta_{i'}$, therefore
substracting these two equations will give $p=jm$. But $p$ is irreducible and $j \leq r< p$ and $m < p$ which is impossible.

\end{document}